\newtheorem{theorem}{Theorem}
\newtheorem{lemma}[theorem]{Lemma}
\newtheorem{proposition}[theorem]{Proposition}
\newtheorem*{Hensel}{Hensel's lemma}
\theoremstyle{definition}
\newtheorem{example}[theorem]{Example}
\newtheorem{remark}[theorem]{Remark}
\newtheorem*{definition*}{Definition}
\newtheorem*{notation*}{Notation}
\newcommand{\N}{\mathbb{N}}
\newcommand{\Q}{\mathbb{Q}}
\newcommand{\Z}{\mathbb{Z}}
\newcommand{\nequiv}{\mathrel{\not\equiv}}
\newcommand{\colonequal}{\mathrel{\mathop:}=}
\DeclareMathOperator{\dens}{dens}
\newcommand{\size}[1]{\lvert #1 \rvert}
\newcommand{\seq}[1]{\cite[\href{http://oeis.org/#1}{#1}]{OEIS}}
\begin{document}

\title[Limiting density of the Fibonacci sequence]{Limiting density of the Fibonacci sequence \\ modulo powers of a prime}

\author{Nicholas Bragman}

\author{Eric Rowland}
\address{
	Department of Mathematics \\
	Hofstra University \\
	Hempstead, NY \\
	USA
}

\date{August 22, 2025}

\begin{abstract}
For a given prime $p$, we determine the limit, as $\lambda \to \infty$, of the density of residues modulo~$p^\lambda$ attained by the Fibonacci sequence.
In particular, we show that this limiting density is related to zeros in the sequence of Lucas numbers modulo $p$.
The proof uses a piecewise interpolation of the Fibonacci sequence to the $p$-adic numbers and a characterization of Wall--Sun--Sun primes $p$ in terms of the $p$-adic absolute value of a number related to the $p$-adic golden ratio.
\end{abstract}

\maketitle

\section{Introduction}\label{Introduction}

The Fibonacci sequence $F(n)_{n \geq 0}$ is defined by the initial conditions $F(0) = 0$ and $F(1) = 1$ and the recurrence
\begin{equation}\label{recurrence}
	F(n + 2) = F(n + 1) + F(n)
\end{equation}
for $n \geq 0$.
It is well known that $F(n)_{n \geq 0}$ is periodic modulo~$m$.
For example, the Fibonacci sequence modulo~$7$ is
\[
	0, 1, 1, 2, 3, 5, 1, 6, 0, 6, 6, 5, 4, 2, 6, 1, 0, 1, 1, 2, 3, 5, 1, 6, \dots
\]
with period length $16$.

Not every residue modulo~$m$ is attained by $F(n)_{n \geq 0}$.
For example, no Fibonacci number is congruent to $4$ modulo~$11$.
This suggests the following question.
What is the density
\[
	\frac{\lvert\{F(n) \bmod m : n \geq 0\}\rvert}{m}
\]
of residues that the Fibonacci sequence attains modulo~$m$?
Burr~\cite{Burr} completed the characterization of integers $m \geq 2$ for which this density is $1$ (that is, every residue is attained):
The sequence $(F(n) \bmod m)_{n \geq 0}$ contains all residues modulo~$m$ if and only if $m = 5^k m'$ for some $k \geq 0$ and some $m' \in \{2, 4, 6, 7, 14\} \cup \{3^j : j \geq 0\}$.
More recently, Dubickas and Novikas~\cite{Dubickas--Novikas} showed that for every $k \geq 1$ there exists a modulus $m$ and a sequence satisfying the Fibonacci recurrence that attains exactly $k$ residues modulo $m$.
Sanna~\cite{Sanna} studied this question for other second-order recurrences.
For a constant-recursive sequence satisfying a general second-order recurrence, Bumby~\cite{Bumby} characterized the moduli for which the residues are uniformly distributed.

In this article we study the density of residues attained by the Fibonacci sequence modulo prime powers.
Specifically, we are interested in the following.

\begin{definition*}
Let $p$ be a prime.
The \emph{limiting density} of the Fibonacci sequence modulo powers of $p$ is
\[
	\dens(p) \colonequal \lim_{\lambda \to \infty} \frac{\lvert\{F(n) \bmod p^\lambda : n \geq 0\}\rvert}{p^\lambda}.
\]
\end{definition*}

This limit exists since the density of residues attained modulo~$p^\lambda$ is bounded above by the density of residues attained modulo~$p^{\lambda - 1}$.
For example, the fact that no Fibonacci number is congruent to $4$ modulo~$11$ implies that no Fibonacci number is congruent to any of $4, 15, 26, \dots, 114$ modulo~$11^2$.
By Burr's characterization, $\dens(3) = 1$ and $\dens(5) = 1$.

Rowland and Yassawi~\cite{Rowland--Yassawi} provided a framework for determining $\dens(p)$ using a piecewise interpolation of $F(n)$ to $\Z_p$ and proved that $\dens(11) = \frac{145}{264}$.
In this article we generalize this result to give an algorithm for computing $\dens(p)$, given a prime $p$.
Our main result is Theorem~\ref{main theorem} below.
Before stating it, we need some definitions and notation.

\begin{definition*}
The \emph{period length} of the Fibonacci sequence modulo~$p$, denoted $\pi(p)$, is the smallest integer $m \geq 1$ such that $F(n + m) \equiv F(n) \mod p$ for all $n \geq 0$.
The \emph{restricted period length} of the Fibonacci sequence modulo~$p$, denoted $\alpha(p)$, is the smallest integer $m \geq 1$ such that $F(m) \equiv 0 \mod p$.
\end{definition*}

Let $L(n)_{n \geq 0}$ be the sequence of Lucas numbers, defined by $L(0) = 2$, $L(1) = 1$, and $L(n + 2) = L(n + 1) + L(n)$ for $n \geq 0$.

\begin{definition*}
Let $p$ be a prime.
We say that $i \in \{0, 1, \dots, \pi(p) - 1\}$ is a \emph{Lucas zero} (with respect to $p$) if $L(i) \equiv 0 \mod p$ and a \emph{Lucas non-zero} if $L(i) \nequiv 0 \mod p$.
\end{definition*}

For example, let $p = 7$.
We have $\pi(7) = 16$ and $\alpha(7) = 8$.
Moreover, the Lucas zeros are $i = 4$ and $i = 12$; indeed, $L(4) = 7$ and $L(12) = 322$.
In Section~\ref{section: Lucas zeros} we will explicitly identify the Lucas zeros with respect to $p$.
In particular, we will show that there are at most $2$.

Let $\nu_p(n)$ denote the $p$-adic valuation of $n$; that is, $\nu_p(n)$ is the exponent of the highest power of $p$ dividing $n$.
Let
\[
	\epsilon =
	\begin{cases}
		1	& \text{if $p \equiv 1, 4 \mod 5$} \\
		-1	& \text{if $p \equiv 2, 3 \mod 5$} \\
		0	& \text{if $p = 5$.}
	\end{cases}
\]
Theorem~\ref{main theorem} shows that $\dens(p)$ depends on the integer $e = \nu_p(F(p - \epsilon))$.
One can show that $\alpha(p)$ divides $p - \epsilon$, so $e \geq 1$.
As we discuss in Section~\ref{Wall exponent}, there are no known examples of primes for which $e \geq 2$.

\begin{theorem}\label{main theorem}
Let $p \neq 2$ be a prime, and define $e = \nu_p(F(p - \epsilon))$.
Let
\[
	N(p)
	= \left\lvert\left\{F(i) \bmod p^e : \text{$i$ is a Lucas non-zero}\right\}\right\rvert,
\]
and let $Z(p)$ be the number of Lucas zeros $i$ such that $F(i) \nequiv F(j) \mod p^e$ for all Lucas non-zeros $j$.
Then
\[
	\dens(p)
	= \frac{N(p)}{p^e}
	+ \frac{Z(p)}{2 p^{2 e - 1} (p + 1)}.
\]
\end{theorem}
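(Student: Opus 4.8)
The plan is to read $\dens(p)$ as a Haar measure and compute it through the $p$-adic interpolation. Writing $S = \{F(n) : n \geq 0\}$ and letting $\mu$ be the normalized Haar measure on $\Z_p$, I would first observe that $\size{S \bmod p^\lambda}/p^\lambda \to \mu(\overline{S})$, so that $\dens(p) = \mu(\overline{S})$, the measure of the closure of $S$ in $\Z_p$. The task then becomes to describe $\overline{S}$ explicitly and measure it. I treat the main case $p \neq 5$ (so $\sqrt5$ is a unit and $\phi,\psi$, the roots of $x^2 - x - 1$, lie in $\Z_p$ when $\epsilon = 1$ or in the unramified quadratic extension when $\epsilon = -1$), and handle the ramified prime $p = 5$ directly, where $\dens(5) = 1$. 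Using Binet's formula $F(n) = (\phi^n - \psi^n)/\sqrt5$ and the Rowland--Yassawi framework, for each $a \in \{0, \dots, \pi(p)-1\}$ the map $k \mapsto F(a + \pi(p)k)$ extends to an analytic $f_a \colon \Z_p \to \Z_p$, because $\phi^{\pi(p)} \equiv \psi^{\pi(p)} \equiv 1 \bmod p$ makes $(\phi^{\pi(p)})^k$ and $(\psi^{\pi(p)})^k$ converge for $k \in \Z_p$. Since $\Z_{\geq 0}$ is dense in $\Z_p$ and each $f_a(\Z_p)$ is compact, $\overline{S} = \bigcup_a f_a(\Z_p)$, and it remains to measure this finite union.

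Setting $\ell = \log(\phi^{\pi(p)})$ and using $\psi^{\pi(p)} = (\phi^{\pi(p)})^{-1}$ (valid since $\pi(p)$ is even, so $(\phi\psi)^{\pi(p)} = 1$), a direct computation gives $f_a'(0) = \ell\, L(a)/\sqrt5$ and $f_a''(0) = \ell^2 F(a)$. The two valuation facts I need are $\nu_p(\ell) = \nu_p(\phi^{\pi(p)} - 1) = e$ (this is the ``$p$-adic golden ratio'' characterization that makes $e \geq 2$ equivalent to $p$ being Wall--Sun--Sun) and $\nu_p(L(a)) \geq e$ for every Lucas zero $a$. The latter I would derive from $F(2a) = F(a)L(a)$ together with the valuation law $\nu_p(F(\alpha(p)s)) = \nu_p(F(\alpha(p))) + \nu_p(s)$ and the identity $\nu_p(F(\alpha(p))) = e$, which holds because $\alpha(p) \mid p - \epsilon$ with cofactor prime to $p$. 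Since $L(a)^2 - 5F(a)^2 = 4(-1)^a$, a Lucas zero $a$ has $F(a)$ a unit; hence $\nu_p(f_a'(0)) = e$ for Lucas non-zeros, while $\nu_p(f_a''(0)) = 2e$ at Lucas zeros.

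To identify each image I would substitute $v = (\phi^{\pi(p)})^k$, which ranges over the ball $1 + p^e\Z_p$ as $k$ ranges over $\Z_p$, turning $f_a$ into $g(v) = (\phi^a v - \psi^a v^{-1})/\sqrt5$. For a Lucas non-zero $a$, the derivative $g'(v) = (\phi^a + \psi^a v^{-2})/\sqrt5 \equiv L(a)/\sqrt5 \bmod p^e$ is a unit throughout, so $g$ is an isometry and $f_a(\Z_p) = F(a) + p^e\Z_p$ is a ball of measure $p^{-e}$. For a Lucas zero $a$, the equation $g'(v) = 0$ reads $v^2 = -(\psi/\phi)^a = 1 - L(a)\phi^{-a} \in 1 + p^e\Z_p$, which has a unique root $v_0$ in $1 + p^e\Z_p$ because that group is uniquely $2$-divisible; thus $f_a$ has a genuine critical point. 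Expanding $g$ at $v_0$, where $g''(v_0)$ is a unit, and writing $v - v_0 = p^e\tau$ with $\tau \in \Z_p$, I would show that, after absorbing the higher-order terms into a measure-preserving reparametrization of $\tau$, $f_a(\Z_p) = g(v_0) + (\text{unit})\,p^{2e}\{\tau^2 : \tau \in \Z_p\}$. Since the set of squares in $\Z_p$ has measure $\frac{p}{2(p+1)}$, this image has measure $p^{-2e}\cdot\frac{p}{2(p+1)} = \frac{1}{2(p+1)p^{2e-1}}$, and it lies in the ball $F(a) + p^e\Z_p$ because $g(v_0) \equiv F(a) \bmod p^e$.

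Finally I would assemble $\overline{S}$. The Lucas non-zero pieces contribute exactly the distinct balls $F(a) + p^e\Z_p$, of which there are $N(p)$, giving measure $N(p)/p^e$. Each Lucas-zero piece sits inside one such ball; if its residue $F(a) \bmod p^e$ already occurs among the Lucas non-zeros it is absorbed and contributes nothing, whereas otherwise it lies in a ball disjoint from all the others and contributes its full measure $\frac{1}{2(p+1)p^{2e-1}}$ --- which is precisely the content of the count $Z(p)$. Summing the two contributions yields the stated formula. I expect the \emph{main obstacle} to be the local analysis at the Lucas zeros: producing the critical point $v_0$ in the correct ball, controlling the higher-order terms so that the image is \emph{exactly} a scaled copy of the squares (not merely contained in one), adapting the argument to the extension case $\epsilon = -1$ where $v$ runs over a $\Z_p$-line in $1 + p^e W$, and verifying that the at most two Lucas-zero pieces counted by $Z(p)$ are mutually disjoint. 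The supporting facts $\nu_p(F(\alpha(p))) = e$ and the evenness of $\pi(p)$ are what make all the valuations line up.
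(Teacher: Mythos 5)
Your proposal is sound and follows the same global skeleton as the paper: reading $\dens(p)$ as $\mu(\overline{F(\N)})$, cutting the interpolation into finitely many analytic branches indexed by residues modulo $\pi(p)$, splitting into Lucas zeros versus non-zeros, and assembling via $N(p)$, $Z(p)$, and the fact (Proposition~\ref{Lucas zeros}) that the two possible Lucas zeros have distinct Fibonacci values mod $p$. Where you genuinely diverge is in how each branch's image is measured and where the valuation inputs come from. The paper fixes a target $z$ and solves $y^2 - \sqrt{5}\, z y - (-1)^i = 0$: for Lucas non-zeros the discriminant is $\equiv L(i)^2 \bmod p^e$, a unit square, and Lemma~\ref{y set general} places the root on the curve $h_i(\Z_p)$; for Lucas zeros, Hensel's lemma (Lemma~\ref{Lucas zero image}) produces explicit digit conditions --- $\nu_p\bigl(z - \omega(\phi)^i\tfrac{2}{\sqrt{5}}\bigr)$ even and $\geq 2e$, with quadratic-residue leading digit --- and Proposition~\ref{Lucas zero measure} sums the resulting geometric series. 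You instead analyze the map itself: unit derivative $\Rightarrow$ scaled isometry $\Rightarrow$ the image is the full ball $F(a) + p^e \Z_p$; nondegenerate critical point $v_0$ $\Rightarrow$ the image is $g(v_0) + (\text{unit})\, p^{2e}\{\text{squares}\}$, so the known measure $\tfrac{p}{2(p+1)}$ of the squares can simply be quoted. Both routes work; the paper's digit conditions additionally buy the explicit tree construction of Figure~\ref{p=7 tree}, while your critical-point argument is shorter at the Lucas zeros. Your valuation facts also come from a different source: the classical lifting-the-exponent law $\nu_p(F(\alpha(p) s)) = \nu_p(F(\alpha(p))) + \nu_p(s)$, which indeed yields both $\nu_p(\log_p \phi^{\pi(p)}) = e$ (this is Theorem~\ref{Wall exponent characterization} in disguise, so the LTE law must be genuinely cited, not treated as free) and $\nu_p(L(a)) = e$ at Lucas zeros (the paper's Theorem~\ref{Wall exponent Lucas characterization}, proved there by root-of-unity manipulation and the McIntosh--Roettger congruence). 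Finally, the obstacle you flag in the case $\epsilon = -1$ is the real crux, and the paper's resolution is the norm observation (Lemma~\ref{pure square root 5}): a norm-one element of $1 + p^e \mathcal{O}_K$ has logarithm in $p^e \sqrt{5}\, \Z_p$, which both identifies the domain of $g$ as the curve $\exp_p(p^e \sqrt{5}\, \Z_p)$ and places your critical point $v_0$ (whose square $-(\psi/\phi)^a$ has norm one) on that curve. With that ingredient your isometry and critical-point arguments do close: the images lie in $\Z_p$ by continuity, distances scale exactly by $p^{-e}$, and a compact subset of a ball of radius $p^{-e}$ in $\Z_p$ that is isometric to $p^e\Z_p$ must exhaust the ball (map distinct cosets of $p^n\Z_p$ to distinct cosets, count, and use compactness).
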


In particular, $\dens(p) \in \Q$ and $\dens(p) \neq 0$.
In the case that there are no Lucas zeros with respect to $p$, we have
\[
	\dens(p) = \frac{N(p)}{p^e} = \frac{\lvert \{F(i) \bmod p^e : 0 \leq i \leq \pi(p) - 1\} \rvert}{p^e}.
\]
We refer to the expression for $\dens(p)$ in Theorem~\ref{main theorem} as an algorithm rather than a formula because we do not have a way to compute $N(p)$ and $Z(p)$ without essentially computing $F(i) \bmod p^e$ for each $i \in \{0, 1, \dots, \pi(p) - 1\}$.

Next we give several examples to show the variety of behavior that occurs.

\begin{example}
Let $p = 13$.
The period length is $\pi(13) = 28$, and the restricted period length is $\alpha(13) = 7$.
There are no Lucas zeros.
The set $\{F(0), \dots, F(27)\} \bmod 13$ is $\{0, 1, 2, 3, 5, 8, 10, 11, 12\}$.
Therefore $N(13) = 9$, and $\dens(13) = \frac{9}{13}$.
In fact, $\frac{9}{13}$ is the density of residues attained by the Fibonacci sequence modulo $13^\lambda$ for every $\lambda \geq 1$.
\end{example}

\begin{example}
Let $p = 19$, for which $\pi(19) = 18 = \alpha(19)$.
The only Lucas zero is $9$.
The set
\[
	\{F(i) \bmod 19 : \text{$0 \leq i \leq 17$ and $i \neq 9$}\} = \{0, 1, 2, 3, 5, 8, 11, 13, 16, 17, 18\}
\]
has size $N(19) = 11$ and does not contain $(F(9) \bmod 19) = 15$.
Therefore $Z(19) = 1$, and $\dens(19) = \frac{11}{19} + \frac{1}{760} = \frac{441}{760}$.
This density is the limit of the decreasing sequence $1, \frac{12}{19}, \frac{210}{361}, \frac{3981}{6859}, \frac{75621}{130321}, \dots$ of densities of residues attained modulo $19^\lambda$ for $\lambda \geq 0$.
\end{example}

\begin{example}
Let $p = 31$, for which $\pi(31) = 30 = \alpha(31)$.
The only Lucas zero is $15$, but $F(15) \equiv 21 = F(8) \mod 31$.
Therefore $Z(31) = 0$ and $\dens(31) = \frac{N(31)}{31} = \frac{19}{31}$.
\end{example}

\begin{example}
Let $p = 7$, for which $\pi(7) = 16$ and $\alpha(7) = 8$.
The Lucas zeros are $4$ and $12$.
The set
\[
	\{F(i) \bmod 7 : \text{$0 \leq i \leq 15$ and $i \neq 4, 12$}\} = \{0, 1, 2, 5, 6\}
\]
has size $N(7) = 5$ and does not contain $(F(4) \bmod 7) = 3$ or $(F(12) \bmod 7) = 4$.
Therefore $Z(7) = 2$ and $\dens(7) = \frac{5}{7} + \frac{2}{112} = \frac{41}{56}$.
Figure~\ref{p=7 tree} encodes the residues attained by the Fibonacci sequence modulo~$7^\lambda$ for $0 \leq \lambda \leq 6$.
\begin{figure}
	\includegraphics[width=\textwidth]{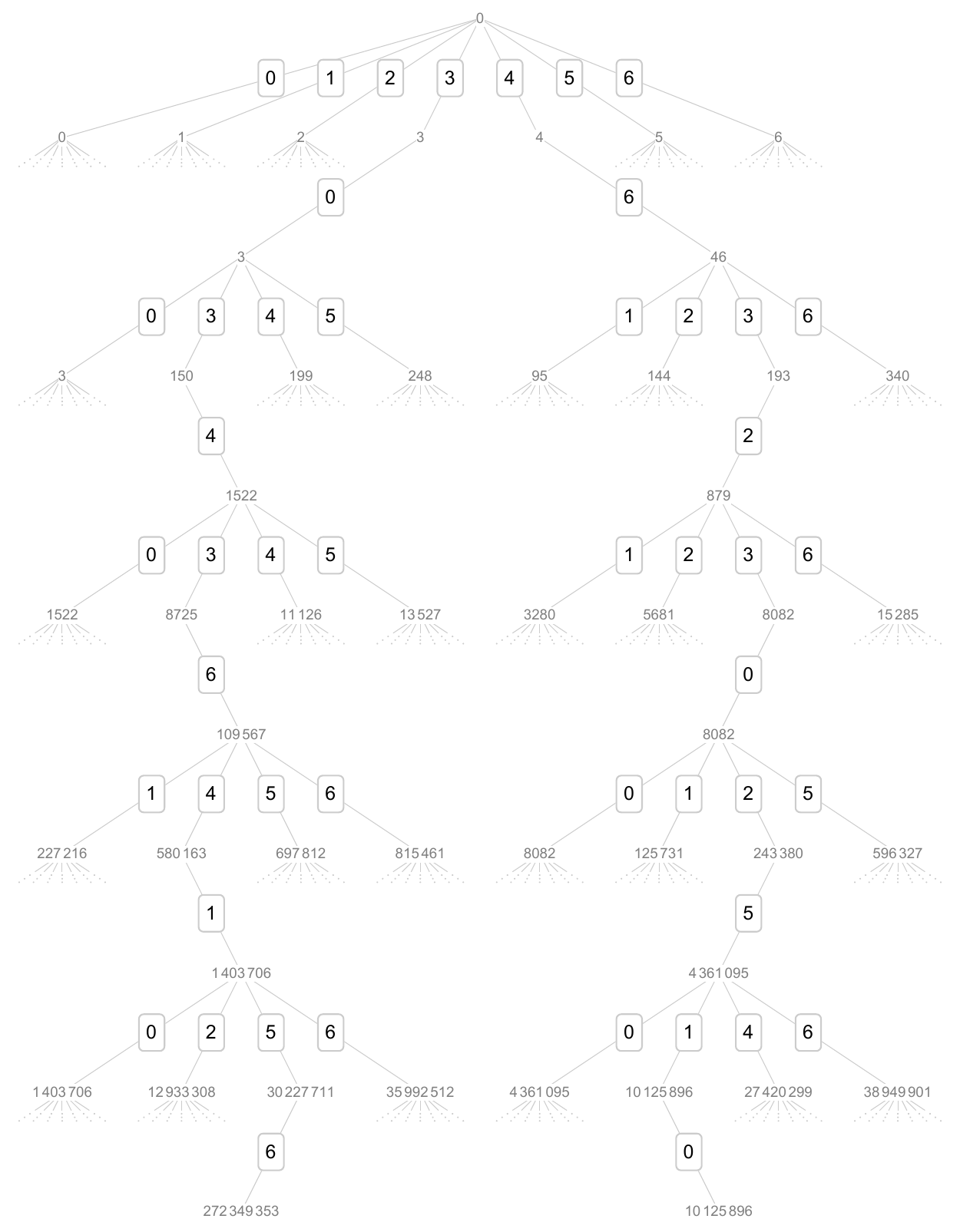}
	\caption{The tree of residues attained by the Fibonacci sequence modulo small powers of $7$. The edges are labeled with base-$7$ digits. Full $7$-ary subtrees are not shown but are indicated by dotted edges.}
	\label{p=7 tree}
\end{figure}
Level $\lambda$ in the tree contains the residues modulo~$7^\lambda$.
Dotted edges from a residue class $m$ on level $\lambda$ indicate an omitted full $7$-ary tree rooted at that vertex; that is, for every $\gamma \geq \lambda$ and for every integer $k \equiv m \mod 7^\lambda$ there exists $n \geq 0$ such that $F(n) \equiv k \mod 7^\gamma$.
On level $\lambda = 1$ the residue classes $0, 1, 2, 5, 6$ modulo~$7$ are roots of full $7$-ary trees; these trees contribute $\frac{5}{7}$ to the measure of $\overline{F(\N)}$.
We describe how to build additional levels of the tree quickly in Section~\ref{Proof of the main result}.
\end{example}

For $p = 2$, we have the following analogue of Theorem~\ref{main theorem}.

\begin{theorem}\label{theorem p=2}
The limiting density of the residues attained by the Fibonacci sequence modulo powers of $2$ is $\dens(2) = \frac{21}{32}$.
\end{theorem}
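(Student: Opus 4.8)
The plan is to show that for $p = 2$ the density stabilizes at a finite level: I claim that $\overline{F(\N)} \subseteq \Z_2$ is exactly the union of the residue classes modulo $2^5 = 32$ that the Fibonacci sequence attains, so that $\dens(2)$ equals the Haar measure of this clopen set. The first concrete step is purely computational. Since $\pi(32) = 48$, I would list $F(n) \bmod 32$ for $0 \le n \le 47$ and read off the attained residues; one finds exactly $21$ of them, namely $\{0,1,2,3,5,7,8,9,11,13,15,16,17,19,21,23,24,25,27,29,31\}$. Thus the candidate value is $\frac{21}{32}$, consistent with the monotonically decreasing densities $1, 1, \frac34, \frac{11}{16}, \frac{21}{32}$ of residues attained modulo $2^\lambda$ for $1 \le \lambda \le 5$.

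Next I would split the equality $\dens(2) = \frac{21}{32}$ into an easy upper bound and the substantive lower bound. The upper bound is automatic: if a residue is attained modulo $2^\lambda$ for some $\lambda \ge 5$, then its reduction modulo $32$ is one of the $21$ residues above, so at most $21 \cdot 2^{\lambda - 5}$ residues are attained modulo $2^\lambda$ and the density is at most $\frac{21}{32}$. The content is therefore the lower bound, which amounts to showing that every one of the $21$ attained residue classes modulo $32$ is the root of a full binary subtree, in the sense of the $p = 7$ example: for each such class and every $\gamma \ge 5$, every integer congruent to it modulo $32$ is itself attained modulo $2^\gamma$. Granting this, exactly $21 \cdot 2^{\lambda - 5}$ residues are attained modulo $2^\lambda$ for every $\lambda \ge 5$, and $\dens(2) = \frac{21}{32}$.

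To establish the full-subtree property I would import the piecewise $2$-adic interpolation framework of Rowland and Yassawi \cite{Rowland--Yassawi}. Because $5 \equiv 5 \bmod 8$ is not a square in $\Q_2$, the golden ratio lives in the unramified quadratic extension $K = \Q_2(\sqrt 5)$, whose ring of integers $\mathcal{O}$ has residue field $\mathbb{F}_4$; here $\phi$ and $\psi$ are units. The delicate point at $p = 2$ is that $\phi^3 = 1 + 2\phi$ lies in $1 + 2\mathcal{O}$ but not in $1 + 4\mathcal{O}$, so the $2$-adic exponential does not converge on $\log(\phi^3)$; passing to $\phi^6 = 1 + 4\phi^3 \in 1 + 4\mathcal{O}$ repairs this. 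I would therefore interpolate $\phi^n$ and $\psi^n$ along each residue class $n \equiv i \pmod 6$, which by Binet's formula produces continuous functions $f_i \colon \Z_2 \to \Z_2$ agreeing with $F$ on that class (note $6 \mid \pi(2^\lambda)$ for $\lambda \ge 2$). For each of the $21$ attained residues I would exhibit a point $x_0$ and invoke a Hensel-type argument to show that $f_i$ carries a neighbourhood of $x_0$ bijectively onto a ball of radius $2^{-5}$; the exponent $5$ should emerge from combining $\nu_2(\log \phi^6) = 2$ with the valuation bookkeeping needed to satisfy the Hensel hypotheses, which is exactly why stabilization begins at $\lambda = 5$.

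I expect the main obstacle to be precisely the feature that forces $p = 2$ out of Theorem~\ref{main theorem}: the group $1 + 2\Z_2$ is not procyclic and the $2$-adic logarithm and exponential converge only on $4\mathcal{O}$, so the clean odd-$p$ interpolation must be rebuilt with an extra factor of $2$ throughout. Concretely, the crux is to compute $\nu_2$ of the interpolated derivative accurately enough to verify the Hensel hypotheses, and to confirm that full subtrees sprout at level $5$ and not later — equivalently, to rule out any thin, non-full branches of the kind that generate the $\frac{Z(p)}{2p^{2e-1}(p+1)}$ term for odd primes. That $\frac{21}{32}$ carries no factor of $3 = p + 1$ in its denominator is strong evidence that no such branch survives, but verifying this rigorously is where the real work lies.
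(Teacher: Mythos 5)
Your overall architecture coincides with the paper's: a six-piece $2$-adic interpolation of $F$ along residue classes modulo $6$ (your $\phi^i \exp_2(m \log_2 \phi^6)$ is a reparametrization of the paper's $F_{i,r}$ in Theorem~\ref{interpolation p=2}, since $\omega(\phi)^3 = 1$), the upper bound $\dens(2) \leq \frac{21}{32}$ read off from the $21$ residues attained modulo $32$, and a lower bound obtained by showing the images of the interpolating functions fill out $2$-adic balls. Your structural claim is also exactly what the paper establishes: by Proposition~\ref{image p=2} the closure $\overline{F(\N)}$ contains $(3 + 4\Z_2) \cup (1 + 4\Z_2) \cup 8\Z_2 \cup (2 + 32\Z_2)$, which is precisely the union of the $21$ balls of radius $2^{-5}$ you describe, so full subtrees do sprout at level $5$ and the density stabilizes there.

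The gap is that your lower bound is a plan rather than a proof, and what you defer (``verifying this rigorously is where the real work lies'') is the entire content of the theorem beyond a finite computation. The deferred step is genuinely delicate in the form you propose: on the Lucas-zero classes the derivative of the interpolating function has extra $2$-adic valuation (it has absolute value $2^{-3}$ on the class $n \equiv 0 \bmod 6$ and $2^{-4}$ on the class $n \equiv 3 \bmod 6$), so to produce balls of radius $2^{-5}$ you must run your Hensel/inverse-function argument on neighbourhoods of radius $2^{-2}$ and $2^{-1}$ respectively, and at those scales the required domination of quadratic Taylor terms is borderline because of the divisions by $2$ -- exactly the $p = 2$ pathology you flag but do not resolve. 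It is worth knowing that the paper sidesteps Hensel at $p = 2$ entirely: it solves $g_r(y) = z$, i.e.\ $y^2 - \sqrt{5}\, z y - (-1)^r = 0$, by the quadratic formula, observes that the discriminant $5z^2 + (-1)^r 4$ is a square in $\Z_2$ \emph{identically} on each target cylinder (for instance $5z^2 - 4 \equiv 16 \bmod 128$ for $z \in 2 + 32\Z_2$, and $16u$ with $u \equiv 1 \bmod 8$ is a square), and then certifies that the appropriate root lies in $h_{i,r}(\Z_2) = \omega(\phi)^{i-r} \phi^r \exp_2(4\sqrt{5}\,\Z_2)$ via the logarithm/exponential group isomorphism (Lemmas~\ref{y set general} and \ref{image under h p=2}). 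That global square criterion replaces all of your local valuation bookkeeping, and in particular it is what rules out the thin, partially-branching sets that produce the $\frac{Z(p)}{2p^{2e-1}(p+1)}$ term for odd primes; your observation that $\frac{21}{32}$ has no factor of $3$ in its denominator is suggestive but is evidence, not an argument. So your route can very likely be completed, but completing it \emph{is} the theorem, and the quadratic-formula argument is the cleaner way to do it.
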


The values of $\dens(p)$ for several primes are given in the following table.
\[
	\begin{array}{r|cccccccccccccc}
		p & 2 & 3 & 5 & 7 & 11 & 13 & 17 & 19 & 23 & 29 & 31 & 37 & 41 & 43 \\
		\dens(p) & \tfrac{21}{32} & 1 & 1 & \tfrac{41}{56} & \tfrac{145}{264} & \tfrac{9}{13} & \tfrac{13}{17} & \tfrac{441}{760} & \tfrac{409}{552} & \tfrac{541}{1740} & \tfrac{19}{31} & \tfrac{29}{37} & \tfrac{715}{1722} & \tfrac{33}{43}
	\end{array}
\]
Additional values can be found in the OEIS~\cite[\href{http://oeis.org/A350999}{A350999} and \href{http://oeis.org/A351000}{A351000}]{OEIS}.
Among the first $2000$ primes, the smallest density that occurs is $\dens(9349) = \frac{504901}{174826300} \approx .002888$.
This low density can be partly explained by the fact that $F(38) \equiv 0 \mod 9349$.
An even smaller density occurs for the prime $F(29) = 514229$;
here $\dens(514229) = \frac{53}{514229} \approx .000103$.
A natural question, which we do not address here, is whether there exist primes $p$ with arbitrarily small $\dens(p)$.
More generally, what are the limit points of the set $\{\dens(p) : \text{$p$ is prime}\}$?

The proofs of Theorems~\ref{main theorem} and \ref{theorem p=2} make use of $p$-adic analytic functions; see for example Gouv\^{e}a's text~\cite{Gouvea} for the relevant background.
In Section~\ref{section: Lucas zeros} we identify the Lucas zeros with respect to $p$.
In Section~\ref{Wall exponent} we give a more convenient characterization of the exponent $e$ that appears in Theorem~\ref{main theorem}; this allows us to account for the possible existence of Wall--Sun--Sun primes.
We also give an additional characterization of $e$ as $\nu_p(L(i))$ where $i$ is a Lucas zero.
In Section~\ref{p-adic} we interpolate the Fibonacci sequence to the field of $p$-adic numbers for $p \neq 2$.
In Section~\ref{Proof of the main result} we prove Theorem~\ref{main theorem}, and in Section~\ref{p = 2} we treat the case $p = 2$.

\section{Lucas zeros}\label{section: Lucas zeros}

In this section we identify the Lucas zeros for the Fibonacci sequence with respect to $p$, in terms of $\alpha(p)$.
First we recall the following special case of a theorem of Vinson~\cite[Theorem~2]{Vinson}, which relates the period length to the restricted period length.

\begin{theorem}\label{ratio}
Let $p \neq 2$ be a prime.
Then
\[
	\frac{\pi(p)}{\alpha(p)} =
	\begin{cases}
		4	& \text{if $\alpha(p)$ is odd} \\
		1	& \text{if $\alpha(p)$ is even but not divisible by $4$} \\
		2	& \text{if $\alpha(p)$ is divisible by $4$}.
	\end{cases}
\]
\end{theorem}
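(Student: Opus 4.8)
The plan is to analyze the matrix $M = \begin{pmatrix} 1 & 1 \\ 1 & 0 \end{pmatrix}$ over $\mathbb{F}_p$, whose powers satisfy $M^n = \begin{pmatrix} F(n+1) & F(n) \\ F(n) & F(n-1) \end{pmatrix}$, so that $\pi(p)$ is exactly the multiplicative order of $M$ in $GL_2(\mathbb{F}_p)$. Writing $\alpha = \alpha(p)$, I would first record that, since $F(\alpha) \equiv 0$ and $F(\alpha+1) = F(\alpha) + F(\alpha-1) \equiv F(\alpha-1) \pmod p$, the matrix $M^{\alpha}$ is the scalar matrix $sI$ with $s \colonequal F(\alpha+1)$. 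Taking determinants in $M^\alpha = sI$ then gives $s^2 \equiv (-1)^\alpha \pmod p$, because $\det M = -1$.

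Next I would show that $\pi(p) = \alpha(p) \cdot d$, where $d$ is the multiplicative order of $s$ in $\mathbb{F}_p^\times$. Indeed $M^{k\alpha} = s^k I$, which equals the identity exactly when $d \mid k$; and any $m$ with $M^m = I$ forces $F(m) \equiv 0$, hence $\alpha \mid m$ by minimality of $\alpha$ (a short division-with-remainder argument, using that the off-diagonal entry of $M^m = s^q M^r$ is $s^q F(r)$ and that $s \neq 0$). Thus the order of $M$ is the least positive multiple of $\alpha$ on which $s$ becomes trivial, namely $\alpha d$, and it remains only to compute $d$ in the three cases.

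When $\alpha$ is odd, $s^2 \equiv -1$, so $s$ has order $4$ and $\pi/\alpha = 4$. When $\alpha$ is even, $s^2 \equiv 1$, so $s = \pm 1$ and $d \in \{1, 2\}$; the whole difficulty is to pin down the sign of $s$ in terms of $\alpha \bmod 4$. To do this I would set $\beta = \alpha/2$ and use $F(\alpha) = F(\beta) L(\beta)$ together with minimality of $\alpha$ (so $F(\beta) \nequiv 0$) to conclude $L(\beta) \equiv 0 \pmod p$. Then, combining the identity $F(2\beta+1) = F(\beta+1)^2 + F(\beta)^2$, the relation $2F(\beta+1) \equiv F(\beta)$ coming from $L(\beta) = 2F(\beta+1) - F(\beta) \equiv 0$, and $L(\beta)^2 - 5F(\beta)^2 = 4(-1)^\beta$, a direct computation yields $s = F(2\beta+1) \equiv (-1)^{\beta+1} \pmod p$. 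Hence $s = 1$ (so $\pi/\alpha = 1$) when $\beta$ is odd, i.e.\ $\alpha \equiv 2 \bmod 4$, and $s = -1$ (so $\pi/\alpha = 2$) when $\beta$ is even, i.e.\ $\alpha \equiv 0 \bmod 4$.

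The main obstacle is this last sign determination, which hinges on the reduction to the single relation $L(\beta) \equiv 0$ and on careful bookkeeping of the quadratic identities. The computation divides by $2$ and by $5$, but this is harmless: $p \neq 2$ by hypothesis, and $p = 5$ has $\alpha(5) = 5$ odd and so never enters the even case. I would also note that this argument recovers exactly the statement of Vinson's theorem, so one could alternatively cite \cite{Vinson} directly.
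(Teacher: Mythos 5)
Your proof is correct, but it is not the paper's route: the paper offers no proof of this statement at all, simply quoting it as a special case of Vinson's Theorem~2. Your argument is therefore a genuine, self-contained alternative, and it checks out line by line. The identification of $\pi(p)$ with the order of $M$ in $GL_2(\mathbb{F}_p)$, the observation that $M^{\alpha} = sI$ with $s = F(\alpha+1)$ and $s^2 \equiv (-1)^{\alpha}$, and the division-with-remainder argument giving $\pi(p) = \alpha(p)\cdot\mathrm{ord}_p(s)$ are all sound; note that this last step is exactly Vinson's Lemma~1 (the paper invokes it later, in the proof of Proposition~3, in the equivalent form that $F(\alpha(p)-1) \equiv F(\alpha(p)+1)$ has order $\pi(p)/\alpha(p)$ modulo $p$), so you have in effect reproven that lemma rather than cited it. The sign determination in the even case is also correct: from $F(2\beta) = F(\beta)L(\beta)$ and minimality of $\alpha$ you get $L(\beta) \equiv 0$, and then
\[
	s = F(2\beta+1) = F(\beta+1)^2 + F(\beta)^2 \equiv \tfrac{1}{4}F(\beta)^2 + F(\beta)^2 = \tfrac{5F(\beta)^2}{4} \equiv \tfrac{4(-1)^{\beta+1}}{4} = (-1)^{\beta+1} \pmod p,
\]
using $2F(\beta+1) \equiv F(\beta)$ and $L(\beta)^2 - 5F(\beta)^2 = 4(-1)^{\beta}$; the only inversions needed are by $2$, so $p \neq 2$ suffices (and, as you say, $p = 5$ never reaches this case since $\alpha(5) = 5$ is odd). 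What your approach buys is an elementary, purely matrix-and-identity proof that keeps the paper self-contained; what the citation buys is brevity and the full strength of Vinson's results (e.g.\ his Theorem~4 on the trichotomy in terms of $p \bmod 20$, which the paper also uses later and which your argument does not recover).
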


(For $p = 2$ we have $\pi(2) = 3 = \alpha(2)$.)

\begin{remark}
The primes for which $\frac{\pi(p)}{\alpha(p)} = 4$ are
\begin{align*}
	& 5, 13, 17, 37, 53, 61, 73, 89, 97, 109, 113, 137, 149, \dots \qquad \text{\seq{A053028}}.
\intertext{Excluding $2$, the primes for which $\frac{\pi(p)}{\alpha(p)} = 1$ are}
	& 11, 19, 29, 31, 59, 71, 79, 101, 131, 139, \dots \qquad \text{\seq{A053032}}.
\intertext{The primes for which $\frac{\pi(p)}{\alpha(p)} = 2$ are}
	& 3, 7, 23, 41, 43, 47, 67, 83, 103, 107, 127, \dots \qquad \text{\seq{A053027}}.
\end{align*}
This ``trichotomy'' was described by Ballot and Elia~\cite{Ballot--Elia}.
\end{remark}

The number of Lucas zeros depends on the $2$-adic valuation of $\alpha(p)$ and corresponds to the three cases of Theorem~\ref{ratio}.

\begin{proposition}\label{Lucas zeros}
Let $p \neq 2$ be a prime.
The set of Lucas zeros with respect to $p$ is
\[
	\begin{cases}
		\{\}								& \text{if $\alpha(p)$ is odd} \\
		\{\frac{\alpha(p)}{2}\}					& \text{if $\alpha(p)$ is even but not divisible by $4$} \\
		\{\frac{\alpha(p)}{2}, \frac{3 \alpha(p)}{2}\}	& \text{if $\alpha(p)$ is divisible by $4$}.
	\end{cases}
\]
Moreover, if $\alpha(p)$ is divisible by $4$ then $F(\frac{\alpha(p)}{2}) \nequiv F(\frac{3 \alpha(p)}{2}) \mod p$.
\end{proposition}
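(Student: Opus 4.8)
The plan is to reduce the condition ``$i$ is a Lucas zero'' to a divisibility statement about $\alpha(p)$, using the doubling identity $F(2i) = F(i)\,L(i)$. First I would record that, for an odd prime, $F$ and $L$ never vanish simultaneously: the well-known identity $L(n)^2 - 5\,F(n)^2 = 4(-1)^n$ shows that if $F(i) \equiv 0 \mod p$ then $L(i)^2 \equiv \pm 4 \nequiv 0 \mod p$ (here $p \neq 2$ is essential, and $p = 5$ causes no trouble since $\pm 4 \nequiv 0 \mod 5$). Combined with $F(2i) = F(i)\,L(i)$ and the primality of $p$, this yields the equivalence
\[
	L(i) \equiv 0 \mod p \iff F(2i) \equiv 0 \mod p \ \text{ and } \ F(i) \nequiv 0 \mod p.
\]
Using the standard fact that $F(m) \equiv 0 \mod p$ exactly when $\alpha(p) \mid m$, this becomes: $i$ is a Lucas zero if and only if $\alpha(p) \mid 2i$ but $\alpha(p) \nmid i$.

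Next I would solve this divisibility condition on the range $0 \le i \le \pi(p) - 1$. If $\alpha(p)$ is odd, then $\alpha(p) \mid 2i$ forces $\alpha(p) \mid i$, so there are no Lucas zeros. If $\alpha(p)$ is even, write $\beta = \alpha(p)/2$; then $\alpha(p) \mid 2i$ together with $\alpha(p) \nmid i$ says exactly that $i$ is an \emph{odd} multiple of $\beta$. It remains to count the odd multiples of $\beta$ in $[0, \pi(p))$, and here Theorem~\ref{ratio} supplies $\pi(p)$: when $\alpha(p) \equiv 2 \mod 4$ we have $\pi(p) = \alpha(p)$, so only $i = \alpha(p)/2$ qualifies; when $4 \mid \alpha(p)$ we have $\pi(p) = 2\,\alpha(p)$, so exactly $i = \alpha(p)/2$ and $i = 3\,\alpha(p)/2$ qualify. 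This produces the three cases of the proposition.

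For the final claim I would introduce the multiplier $M \colonequal F(\alpha(p) - 1) \bmod p$. The addition formula $F(i + \alpha(p)) = F(i+1)\,F(\alpha(p)) + F(i)\,F(\alpha(p)-1)$ together with $F(\alpha(p)) \equiv 0$ gives $F(i + \alpha(p)) \equiv M\,F(i) \mod p$ for all $i$. Cassini's identity $F(n-1)\,F(n+1) - F(n)^2 = (-1)^n$ at $n = \alpha(p)$, using $F(\alpha(p)+1) \equiv F(\alpha(p)-1)$, yields $M^2 \equiv (-1)^{\alpha(p)} \mod p$. Finally, iterating $F(i + \alpha(p)) \equiv M\,F(i)$ and evaluating at $i = 0$ and $i = 1$ (where $F(1) = 1$) shows that the multiplicative order of $M$ equals $\pi(p)/\alpha(p)$. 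When $4 \mid \alpha(p)$ this order is $2$ by Theorem~\ref{ratio}, so $M \nequiv 1$ while $M^2 \equiv 1$; hence $M \equiv -1 \mod p$. Therefore $F(\tfrac{3\alpha(p)}{2}) = F(\tfrac{\alpha(p)}{2} + \alpha(p)) \equiv -F(\tfrac{\alpha(p)}{2}) \mod p$, which differs from $F(\tfrac{\alpha(p)}{2})$ because $F(\tfrac{\alpha(p)}{2}) \nequiv 0$ (as $\alpha(p) \nmid \tfrac{\alpha(p)}{2}$) and $p \neq 2$.

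The main obstacle is the sign determination in the last step: the relation $M^2 \equiv 1$ only gives $M \equiv \pm 1$, and ruling out $M \equiv 1$ is precisely what separates a genuine second Lucas zero from a collision of residues. The clean way around this is the observation that the order of the multiplier $M$ is forced to equal $\pi(p)/\alpha(p)$, so that Theorem~\ref{ratio} pins down $M \equiv -1$ exactly in the case $4 \mid \alpha(p)$.
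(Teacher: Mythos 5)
Your proposal is correct, and for the first (main) part it takes a genuinely different, more elementary route than the paper. The paper works in the residue field $\Z[\sqrt{5}]/(p\Z[\sqrt{5}])$ (or $\Z/p\Z$) via Binet's formula: it shows $\alpha(p)$ is the multiplicative order of $-\phi^2$ and that $i$ is a Lucas zero exactly when $(-\phi^2)^i \equiv -1 \bmod p$, which forces it to dispose of $p=5$ separately since that argument degenerates when $\sqrt{5} \equiv 0$. You instead stay entirely inside $\Z$, combining $F(2i)=F(i)L(i)$, the identity $L(i)^2 - 5F(i)^2 = 4(-1)^i$, and the rank-of-apparition fact that $F(m) \equiv 0 \bmod p$ iff $\alpha(p) \mid m$, to reach the equivalent divisibility description ($\alpha(p) \mid 2i$ and $\alpha(p) \nmid i$); from there both proofs invoke Theorem~\ref{ratio} identically to enumerate solutions in $\{0,\dots,\pi(p)-1\}$. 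What your route buys: no special case for $p=5$ and no algebra over $\sqrt{5}$; what the paper's buys: the order-of-$(-\phi^2)$ viewpoint is not just a proof device but is reused later (e.g.\ to conclude $\omega(\phi)^{\pi(p)}=1$ after Theorem~\ref{interpolation}), so it earns its keep elsewhere in the paper. For the ``moreover'' claim the two arguments coincide in substance: both reduce to the multiplier $M = F(\alpha(p)-1)$ having multiplicative order $\pi(p)/\alpha(p)$, which pins down $M \equiv -1$ when $4 \mid \alpha(p)$; the paper cites Vinson's Lemma~1 \cite{Vinson} for this, whereas you re-derive it from the addition formula. Your derivation is sound, but two facts you label ``standard'' are doing real work and should be cited or proved in a line: the rank-of-apparition fact, and the fact that every period of $(F(n) \bmod p)_{n \geq 0}$ is a multiple of $\pi(p)$ (needed in the step ``$M^k \equiv 1$ iff $\pi(p) \mid k\alpha(p)$'', and available because the sequence is purely periodic mod $p$).
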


Throughout the article we denote $\phi = \frac{1 + \sqrt 5}{2}$ and $\bar\phi = \frac{1 - \sqrt 5}{2} = -\frac{1}{\phi}$.
Recall Binet's formula
\begin{equation}\label{Binet}
	F(n) = \frac{\phi^n - \bar\phi^n}{\sqrt{5}}
\end{equation}
and the analogous formula $L(n) = \phi^n + \bar\phi^n$.
Together these imply $2 \phi^n = L(n) + \sqrt{5} F(n)$ and $2 \bar\phi^n = L(n) - \sqrt{5} F(n)$ as well as the fundamental identity $L(n)^2 - 5 F(n)^2 = (-1)^n 4$.
In Section~\ref{Wall exponent} we will assume $p \neq 5$ and interpret $\sqrt{5}$ as a $p$-adic number or $p$-adic algebraic number, depending whether $\Z_p$ contains a square root of $5$.
For the proof of Proposition~\ref{Lucas zeros}, it suffices to work modulo~$p$, so we work in the residue field
\[
	\begin{cases}
		\Z / (p \Z)					& \text{if $x^2 \equiv 5 \mod p$ has an integer solution} \\
		\Z[\sqrt{5}] / (p \Z[\sqrt{5}])		& \text{otherwise}.
	\end{cases}
\]

\begin{proof}[Proof of Proposition~\ref{Lucas zeros}]
The statement holds for $p = 5$, since $\alpha(5) = 5$ and no Lucas number is divisible by $5$.
Assume $p \neq 5$.

By Binet's formula, $F(n) \equiv 0 \mod p$ if and only if $\phi^n - \bar\phi^n \equiv 0 \mod p$, which can be rewritten $(-\phi^2)^n \equiv 1 \mod p$ since $\phi \bar\phi = -1$.
Since $F(\alpha(p)) \equiv 0 \mod p$ by definition, we have $(-\phi^2)^{\alpha(p)} \equiv 1 \mod p$.
Moreover, $\alpha(p)$ is the smallest positive integer with this property.
In other words, $\alpha(p)$ is the order of $-\phi^2$ modulo~$p$.

Similarly, $L(i) = \phi^i + \bar\phi^i = \frac{\phi^{2 i} + (-1)^i}{\phi^i}$, so $i$ is a Lucas zero if and only if $(-\phi^2)^i \equiv -1 \mod p$.
Therefore the Lucas zeros are precisely the values of $i \in \{0, \dots, \pi(p) - 1\}$ for which the order of $(-\phi^2)^i$ is $2$.
If $\alpha(p)$ is odd, then there are none.
If $\alpha(p)$ is even, then the Lucas zeros are the solutions to $i \equiv \frac{\alpha(p)}{2} \mod \alpha(p)$.
If $\alpha(p)$ is even but not divisible by $4$, then $\pi(p) = \alpha(p)$ by Theorem~\ref{ratio}, so the only solution is $i = \frac{\alpha(p)}{2}$.
If $\alpha(p)$ is divisible by $4$, then $\pi(p) = 2 \alpha(p)$, so the solutions are $i = \frac{\alpha(p)}{2}$ and $i = \frac{\alpha(p)}{2} + \alpha(p)$.

It remains to show that $F(\frac{\alpha(p)}{2}) \nequiv F(\frac{3 \alpha(p)}{2}) \mod p$ when $\alpha(p)$ is divisible by~$4$.
Vinson~\cite[Lemma~1]{Vinson} established that the order of $F(\alpha(p) - 1)$ modulo~$p$ is $\frac{\pi(p)}{\alpha(p)}$.
Since $\frac{\pi(p)}{\alpha(p)} = 2$, this implies that $F(\alpha(p) - 1) \equiv -1 \mod p$.
We have $F(\alpha(p)) \equiv 0 = -F(0) \mod p$, so it follows from $F(\alpha(p) + 1) = F(\alpha(p)) + F(\alpha(p) - 1)$ that
\[
	F(\alpha(p) + 1) \equiv F(\alpha(p) - 1) \equiv -1 = -F(1) \mod p.
\]
Now the Fibonacci recurrence~\eqref{recurrence} inductively implies $F(\alpha(p) + n) \equiv -F(n) \mod p$ for all $n \geq 0$.
In particular, $F(\frac{3 \alpha(p)}{2}) \equiv -F(\frac{\alpha(p)}{2}) \mod p$.
By the minimality of $\alpha(p)$, we have $F(\frac{\alpha(p)}{2}) \nequiv 0 \mod p$, so $F(\frac{3 \alpha(p)}{2}) \nequiv F(\frac{\alpha(p)}{2}) \mod p$.
\end{proof}

\section{The Wall exponent}\label{Wall exponent}

In this section we give two additional characterizations of the exponent $e = \nu_p(F(p - \epsilon))$ that appears in Theorem~\ref{main theorem}.
We will need these characterizations for the proof of Theorem~\ref{main theorem} in Section~\ref{Proof of the main result}, since we will want to express $e$ in terms of the $p$-adic absolute value of a certain $p$-adic number involving $\phi$.
First we introduce some notation.
As in Section~\ref{Introduction}, for $p \neq 5$, let
\[
	\epsilon =
	\begin{cases}
		1	& \text{if $p \equiv 1, 4 \mod 5$} \\
		-1	& \text{if $p \equiv 2, 3 \mod 5$.}
	\end{cases}
\]
Equivalently, we can write $\epsilon = (\frac{p}{5})$ using the Legendre symbol.

For primes $p \equiv 1, 4 \mod 5$, the set of $p$-adic integers $\Z_p$ contains a square root of $5$; this can be shown using quadratic reciprocity and Hensel's lemma.
For other primes, $\Z_p$ does not contain a square root of $5$, so we consider the extension $\Q_p(\sqrt{5})$.
Throughout the article, we denote
\[
	K
	= \Q_p(\sqrt{5})
	= \begin{cases}
		\Q_p			& \text{if $p \equiv 1, 4 \mod 5$} \\
		\Q_p(\sqrt{5})	& \text{if $p \equiv 2, 3 \mod 5$} \\
		\Q_5(\sqrt{5})	& \text{if $p = 5$}.
	\end{cases}
\]
The $p$-adic absolute value of $x \in K$ is denoted $\lvert x \rvert_p$; it extends the $p$-adic absolute value on $\Q$~\cite[Definition~2.1.4 and Theorem~5.3.5]{Gouvea}.
The $p$-adic valuation $\nu_p(x)$ of $x \in \Q_p$ is related to its $p$-adic absolute value by $\size{x}_p = p^{-\nu_p(x)}$.
The ring of integers of $K$ is
\[
	\mathcal O_K
	= \{x \in K : |x|_p \leq 1\}
	= \begin{cases}
		\Z_p			& \text{if $p \equiv 1, 4 \mod 5$} \\
		\Z_p[\sqrt{5}]	& \text{if $p \equiv 2, 3 \mod 5$ and $p \neq 2$} \\
		\Z_2[\phi]		& \text{if $p = 2$} \\
		\Z_5[\sqrt{5}]	& \text{if $p = 5$}.
	\end{cases}
\]
We now consider $\phi$ and $\bar\phi$ as elements of $\mathcal O_K$.

Let $d$ be the degree of the extension $K$ of $\Q_p$, and let $p^f$ be the size of the residue field $\mathcal O_K/\{x \in K : |x|_p < 1\}$.
Namely,
\[
	\text{
		$d
		= \begin{cases}
			1	& \text{if $p \equiv 1, 4 \mod 5$} \\
			2	& \text{if $p \equiv 2, 3 \mod 5$} \\
			2	& \text{if $p = 5$}
		\end{cases}$
		\qquad
		and
		\qquad
		$f
		= \begin{cases}
			1	& \text{if $p \equiv 1, 4 \mod 5$} \\
			2	& \text{if $p \equiv 2, 3 \mod 5$} \\
			1	& \text{if $p = 5$}.
		\end{cases}$
	}
\]
The quotient $d/f$ is the \emph{ramification index} of the extension.

To obtain another description of $e = \nu_p(F(p - \epsilon))$, we will use the fact that the ring of integers $\mathcal O_K$ contains the cyclic group of $(p^f - 1)$th roots of unity~\cite[Corollary 5.4.9]{Gouvea}.
Moreover, for each $x \in K$ such that $\lvert x \rvert_p = 1$, there is a unique $(p^f - 1)$th root of unity congruent to $x$ modulo~$p$ (or, in the case $p = 5$, modulo a uniformizer).
This is because there are precisely $p^f$ residue classes modulo~$p$ (or modulo the uniformizer) and each $(p^f - 1)$th root of unity belongs to a distinct residue class.

\begin{notation*}
If $\lvert x \rvert_p = 1$ and $p \neq 5$, define $\omega(x)$ to be the $(p^f - 1)$th root of unity satisfying $\omega(x) \equiv x \mod p$.
In particular, $\frac{x}{\omega(x)} \equiv 1 \mod p$.
For $p = 5$, we use the uniformizer $\sqrt{5}$ and define $\omega(x)$ to be the $4$th root of unity satisfying $\omega(x) \equiv x \mod \sqrt{5}$.
\end{notation*}

The first main result of this section is that the Wall exponent $e = \nu_p(F(p - \epsilon))$ can also be defined by $\big\lvert\frac{\phi}{\omega(\phi)} - 1\big\rvert_p = \frac{1}{p^e}$.

\begin{theorem}\label{Wall exponent characterization}
Let $p$ be a prime such that $p \neq 2$ and $p \neq 5$.
Then $\lvert F(p - \epsilon) \rvert_p = \big\lvert\frac{\phi}{\omega(\phi)} - 1\big\rvert_p$.
\end{theorem}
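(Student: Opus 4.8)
The plan is to run everything through Binet's formula~\eqref{Binet}, replacing $\phi$ and $\bar\phi$ by their Teichm\"uller representatives times the unit $u \colonequal \phi/\omega(\phi)$, and then to reduce the claim to a statement about $|u - 1|_p$. Note first that $\phi\bar\phi = -1$ and $\phi + \bar\phi = 1$ force $|\phi|_p = |\bar\phi|_p = 1$, so $\omega(\phi)$ and $\omega(\bar\phi)$ are defined; by the defining property of $\omega$ we have $u \equiv 1 \bmod p$, so writing $u = 1 + v$ we get $|v|_p < 1$.

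First I would record two facts about $\omega$. Since $\omega$ is multiplicative on elements of absolute value $1$ and fixes roots of unity, $\phi\bar\phi = -1$ gives $\omega(\phi)\omega(\bar\phi) = \omega(-1) = -1$, whence $\bar\phi/\omega(\bar\phi) = u^{-1}$. Second, I claim $\omega(\phi)^{p-\epsilon} = \epsilon$ and, using that $p-\epsilon$ is even, also $\omega(\bar\phi)^{p-\epsilon} = (-1)^{p-\epsilon}\omega(\phi)^{-(p-\epsilon)} = \epsilon$. This reduces to checking $\phi^{p-\epsilon} \equiv \epsilon \bmod p$ in the residue field and then applying $\omega$: when $p \equiv 1, 4 \bmod 5$ we have $\epsilon = 1$ and $\phi \in \mathbb{F}_p^\times$, so Fermat gives $\phi^{p-1} = 1$; when $p \equiv 2, 3 \bmod 5$ we have $\epsilon = -1$, the residue field is $\mathbb{F}_{p^2}$, and the Frobenius $x \mapsto x^p$ interchanges the two roots $\phi, \bar\phi$ of $x^2 - x - 1$, so $\phi^p \equiv \bar\phi$ and hence $\phi^{p+1} \equiv \phi\bar\phi = -1$.

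Substituting $\phi = \omega(\phi)\,u$ and $\bar\phi = \omega(\bar\phi)\,u^{-1}$ into~\eqref{Binet} and using these facts then gives
\[
	\sqrt{5}\,F(p-\epsilon) = \phi^{p-\epsilon} - \bar\phi^{p-\epsilon} = \epsilon\bigl(u^{p-\epsilon} - u^{-(p-\epsilon)}\bigr) = \epsilon\,u^{-(p-\epsilon)}\bigl(u^{2(p-\epsilon)} - 1\bigr).
\]
Since $p \neq 5$ we have $|\sqrt{5}|_p = 1$, and $|\epsilon|_p = |u|_p = 1$, so $|F(p-\epsilon)|_p = |u^{2(p-\epsilon)} - 1|_p$. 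To finish, I would expand $(1+v)^{2(p-\epsilon)} - 1 = 2(p-\epsilon)\,v + \sum_{k \geq 2}\binom{2(p-\epsilon)}{k}v^k$; because $p$ is odd and $p \nmid p-\epsilon$ we have $\nu_p(2(p-\epsilon)) = 0$, so the linear term has absolute value $|v|_p$ while each higher term has absolute value at most $|v|_p^k < |v|_p$. The ultrametric inequality then forces $|u^{2(p-\epsilon)} - 1|_p = |v|_p = |u - 1|_p$, which combined with the display yields $|F(p-\epsilon)|_p = |\phi/\omega(\phi) - 1|_p$.

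The hard part will be the bookkeeping in the second paragraph: one must correctly pin down $\omega(\phi)^{p-\epsilon}$, where the dichotomy between the split case ($f = 1$) and the inert case ($f = 2$, with Frobenius swapping $\phi$ and $\bar\phi$) enters, and then verify that both the $\phi$- and $\bar\phi$-computations collapse to the single clean value $\epsilon$. Once that is in place, the passage to absolute values and the binomial domination estimate are routine.
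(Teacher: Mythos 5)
Your proof is correct, and it takes a genuinely different route from the paper's. The paper factors the statement through Lucas numbers, proving two propositions: first $\lvert F(p-\epsilon)\rvert_p = \sqrt{\lvert L(p-\epsilon) - 2\epsilon\rvert_p}$, via the identity $5F(n)^2 = L(n)^2 - (-1)^n 4$ together with a cited congruence of McIntosh--Roettger ($L(p-\epsilon) \equiv 2\epsilon \bmod p^2$) to show the factor $L(p-\epsilon)+2\epsilon$ is a unit; and second $\sqrt{\lvert L(p-\epsilon)-2\epsilon\rvert_p} = \bigl\lvert \frac{\phi}{\omega(\phi)} - 1\bigr\rvert_p$, via the factorization $-\epsilon(\phi^{p-\epsilon}-\epsilon)(\bar\phi^{p-\epsilon}-\epsilon) = L(p-\epsilon) - 2\epsilon$, a ``Fermat quotient'' lemma $\lvert x^{p^f} - x\rvert_p = \lvert x - \omega(x)\rvert_p$, and (in the inert case $f=2$) a geometric-series computation. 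You bypass the Lucas numbers entirely: the Teichm\"uller decomposition $\phi = \omega(\phi)u$, $\bar\phi = \omega(\bar\phi)u^{-1}$, the residue-field computation $\omega(\phi)^{p-\epsilon} = \omega(\bar\phi)^{p-\epsilon} = \epsilon$ (Fermat in the split case, Frobenius swapping the roots of $x^2-x-1$ in the inert case), the resulting identity $\sqrt 5\, F(p-\epsilon) = \epsilon\, u^{-(p-\epsilon)}(u^{2(p-\epsilon)} - 1)$, and finally the binomial/ultrametric estimate $\lvert u^N - 1\rvert_p = \lvert u - 1\rvert_p$, valid whenever $p \nmid N$ and $\lvert u-1\rvert_p < 1$; your $N = 2(p-\epsilon)$ is indeed prime to $p$ since $p$ is odd and $p-\epsilon \equiv -\epsilon \bmod p$. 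Your approach buys self-containedness: no external citation, and the paper's two devices (Fermat quotient lemma plus geometric series) collapse into one elementary binomial estimate; the facts you use implicitly---multiplicativity of $\omega$ and $\omega(-1)=-1$ for odd $p$---follow in one line from the uniqueness of the root of unity in each residue class, which the paper states. What the paper's detour buys is the intermediate statement about $\lvert L(p-\epsilon) - 2\epsilon\rvert_p$ itself, which ties the Wall exponent to McIntosh and Roettger's characterization of Wall--Sun--Sun primes and supports the paper's surrounding discussion; your argument proves the theorem but not that link. (One pedantic point: the strict inequality $\lvert v\rvert_p^k < \lvert v\rvert_p$ presumes $v \neq 0$, which holds since otherwise your displayed identity would force $F(p-\epsilon) = 0$.)
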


The prime $p$ is a \emph{Wall--Sun--Sun prime} if $\nu_p(F(p - \epsilon)) \geq 2$.
Wall--Sun--Sun primes are also known as \emph{Fibonacci--Wieferich primes}.
Wall~\cite{Wall} verified that there are none $< 10^4$.
No Wall--Sun--Sun primes are known, although a heuristic argument suggests there are infinitely many~\cite[Section~4]{McIntosh--Roettger}.
For $p \notin \{2, 5\}$, Theorem~\ref{Wall exponent characterization} implies that $p$ is a Wall--Sun--Sun prime if and only if $\big\lvert\frac{\phi}{\omega(\phi)} - 1\big\rvert_p \leq \frac{1}{p^2}$.

We split the proof of Theorem~\ref{Wall exponent characterization} into the following two propositions.
The first is a straightforward generalization of the statement that $p$ is a Wall--Sun--Sun prime if and only if $L(p - \epsilon) \equiv 2 \epsilon \mod p^4$, which is one of several characterizations of Wall--Sun--Sun primes given by McIntosh and Roettger~\cite[Theorem~1]{McIntosh--Roettger} (in a more general form).

\begin{proposition}
Let $p$ be a prime such that $p \neq 2$ and $p \neq 5$.
Then $\lvert F(p - \epsilon) \rvert_p = \sqrt{\lvert L(p - \epsilon) - 2 \epsilon \rvert_p}$.
\end{proposition}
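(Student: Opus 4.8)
The plan is to reduce everything to the fundamental identity $L(n)^2 - 5 F(n)^2 = (-1)^n 4$ evaluated at $n = p - \epsilon$. Since $p$ is odd and $\epsilon = \pm 1$, the argument $p - \epsilon$ is even, so $(-1)^{p-\epsilon} = 1$ and the identity reads $L(p-\epsilon)^2 - 5 F(p-\epsilon)^2 = 4$. Using $\epsilon^2 = 1$, I factor the right-hand side as a difference of squares:
\[
	5 F(p-\epsilon)^2 = L(p-\epsilon)^2 - 4 = \bigl(L(p-\epsilon) - 2\epsilon\bigr)\bigl(L(p-\epsilon) + 2\epsilon\bigr).
\]
Taking $p$-adic absolute values and using $\lvert 5 \rvert_p = 1$ (valid because $p \neq 5$), the desired equality $\lvert F(p-\epsilon)\rvert_p^2 = \lvert L(p-\epsilon) - 2\epsilon \rvert_p$ is equivalent to showing that the remaining factor is a unit, that is, $\lvert L(p-\epsilon) + 2\epsilon \rvert_p = 1$.

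I would establish this by proving the congruence $L(p-\epsilon) \equiv 2\epsilon \mod p$, from which $L(p-\epsilon) + 2\epsilon \equiv 4\epsilon \mod p$ is a unit since $p \neq 2$. The congruence follows from the companion facts $L(p) \equiv 1 \mod p$ and $F(p) \equiv \epsilon \mod p$, obtained by reducing Binet's formula~\eqref{Binet} modulo $p$: in the residue field the Frobenius map fixes $\phi$ and $\bar\phi$ when $\epsilon = 1$ and interchanges them when $\epsilon = -1$, so that $\phi^p + \bar\phi^p \equiv \phi + \bar\phi = 1$ and $\phi^p - \bar\phi^p \equiv \epsilon(\phi - \bar\phi)$. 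Substituting these into the addition formula $2 L(p-\epsilon) = L(p) L(-\epsilon) + 5 F(p) F(-\epsilon)$ (which comes from multiplying the identities $2 \phi^n = L(n) + \sqrt{5}\, F(n)$ and $2 \bar\phi^n = L(n) - \sqrt{5}\, F(n)$) and noting $L(-\epsilon) = -\epsilon$ and $F(-\epsilon) = 1$ gives $2 L(p-\epsilon) \equiv -\epsilon + 5\epsilon = 4\epsilon \mod p$; dividing by $2$ yields the claim.

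The main obstacle is deciding which of the two factors $L(p-\epsilon) \mp 2\epsilon$ absorbs the $p$-divisibility. The difference-of-squares step shows that their product has $p$-adic absolute value $\lvert F(p-\epsilon)\rvert_p^2 < 1$ (indeed $F(p-\epsilon) \equiv 0 \mod p$ because $\alpha(p)$ divides $p - \epsilon$), and since the two factors differ by $4\epsilon$, which is a unit, at most one of them can be a non-unit. All of this is routine once the sign is settled, so the real content is the uniform congruence $L(p-\epsilon) \equiv 2\epsilon \mod p$ that identifies $L(p-\epsilon) - 2\epsilon$ as the non-unit factor.
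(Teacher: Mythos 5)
Your proof is correct, and the skeleton matches the paper's: both start from the identity $L(n)^2 - 5F(n)^2 = (-1)^n 4$ at the even index $n = p-\epsilon$, factor $5F(p-\epsilon)^2 = \bigl(L(p-\epsilon)-2\epsilon\bigr)\bigl(L(p-\epsilon)+2\epsilon\bigr)$, and reduce the problem to showing $\lvert L(p-\epsilon)+2\epsilon\rvert_p = 1$. Where you diverge is in how that last fact is established. The paper simply cites McIntosh and Roettger for the stronger congruence $L(p-\epsilon) \equiv 2\epsilon \bmod p^2$ and observes that adding the unit $4\epsilon$ makes the other factor a unit. You instead prove the congruence $L(p-\epsilon) \equiv 2\epsilon \bmod p$ from scratch: Frobenius fixes $\phi,\bar\phi$ in the residue field when $\epsilon = 1$ and swaps them when $\epsilon = -1$, giving $L(p) \equiv 1$ and $F(p) \equiv \epsilon \bmod p$, and then the addition formula $2L(m+n) = L(m)L(n) + 5F(m)F(n)$ with $(m,n) = (p,-\epsilon)$, together with $L(-\epsilon) = -\epsilon$ and $F(-\epsilon) = 1$, yields $2L(p-\epsilon) \equiv 4\epsilon \bmod p$. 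The mod-$p$ statement is all that is actually needed (only the unit-ness of $L(p-\epsilon)+2\epsilon$ enters the absolute-value computation), so your argument buys self-containedness at the cost of a page of Frobenius bookkeeping, while the paper's citation buys brevity and records the sharper mod-$p^2$ information, which is consistent with (indeed equivalent to) the fact that $e = \nu_p(F(p-\epsilon)) \geq 1$ always. Your closing remark that at most one factor can be a non-unit, since the factors differ by the unit $4\epsilon$, is a nice observation, though it is subsumed by the congruence itself.
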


\begin{proof}
We use the identity $5 F(n)^2 = L(n)^2 - (-1)^n 4$.
Note that $p - \epsilon$ is even and $(-1)^{(p - \epsilon)/2} = \epsilon$.
Therefore
\[
	5 F(p - \epsilon)^2 = \left(L(p - \epsilon) + 2 \epsilon\right) \left(L(p - \epsilon) - 2 \epsilon\right).
\]
McIntosh and Roettger~\cite[Equation~(5)]{McIntosh--Roettger} show that $L(p - \epsilon) - 2 \epsilon \equiv 0 \mod p^2$.
Since $4 \epsilon \nequiv 0 \mod p$, this implies $L(p - \epsilon) + 2 \epsilon \nequiv 0 \mod p$.
It follows that $\size{F(p - \epsilon)}_p^2 = \size{L(p - \epsilon) - 2 \epsilon}_p$ as desired.
\end{proof}

To prove the second proposition, we use the following lemma.

\begin{lemma}\label{Fermat quotient}
Let $p$ be a prime, and let $x \in K$ such that $\size{x}_p = 1$.
Then $\lvert x^{p^f} - x \rvert_p = \lvert x - \omega(x) \rvert_p$.
\end{lemma}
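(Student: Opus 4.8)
The plan is to exploit the fact that $y \colonequal \omega(x)$ is a Teichm\"uller-type representative, so that $y^{p^f - 1} = 1$ and hence $y^{p^f} = y$, while $\size{y}_p = 1$. Because $\size{x}_p = 1$, both $x$ and $y$ lie in the unit group of $\mathcal{O}_K$, and by definition $x \equiv y$ modulo the maximal ideal $\mathfrak{m} = \{z \in K : \size{z}_p < 1\}$ (generated by $p$, or by $\sqrt{5}$ when $p = 5$). The idea is to pass from $x^{p^f} - x$ to $x - y$ by a single algebraic factorization and then control the extra factor by reducing modulo $\mathfrak{m}$.

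Concretely, I would write $S = \sum_{k=0}^{p^f - 1} x^{p^f - 1 - k} y^k$ and use the standard factorization $x^{p^f} - y^{p^f} = (x - y) S$. Since $y^{p^f} = y$, a short computation gives $x^{p^f} - x = (x^{p^f} - y^{p^f}) - (x - y) = (x - y)(S - 1)$. It then remains only to show $\size{S - 1}_p = 1$, after which the multiplicativity of the $p$-adic absolute value yields $\size{x^{p^f} - x}_p = \size{x - y}_p = \size{x - \omega(x)}_p$ as desired.

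To evaluate $\size{S - 1}_p$, I would reduce $S$ modulo $\mathfrak{m}$. Each of the $p^f$ summands satisfies $x^{p^f - 1 - k} y^k \equiv y^{p^f - 1 - k} y^k = y^{p^f - 1} = 1 \pmod{\mathfrak{m}}$, using $x \equiv y$, so $S \equiv p^f \pmod{\mathfrak{m}}$. Since the residue field has characteristic $p$, we have $p^f \equiv 0$, whence $S - 1 \equiv -1 \pmod{\mathfrak{m}}$ and therefore $\size{S - 1}_p = 1$. The main thing to be careful about is that the argument must run uniformly across the ramified cases (in particular $p = 5$, where $\mathfrak{m}$ is generated by the uniformizer $\sqrt{5}$ rather than by $p$); but the only properties used are that reduction modulo $\mathfrak{m}$ is a ring homomorphism onto a field of characteristic $p$ and that $p^f$ vanishes there, both of which hold in every case of the definition of $K$. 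No genuine obstacle arises beyond this bookkeeping.
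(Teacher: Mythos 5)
Your proof is correct, but it follows a genuinely different route from the paper's, even though both share the same skeleton (isolate the factor $x - \omega(x)$ and show its cofactor is a unit). The paper factors $x^{p^f} - x = x \prod_r (x - r)$ completely into linear factors in $\mathcal O_K$, where $r$ runs over all $(p^f - 1)$th roots of unity (whose existence in $\mathcal O_K$ is quoted from Gouv\^{e}a), and then observes that since $x$ is a unit and the roots of unity lie in distinct residue classes, exactly one factor, namely $x - \omega(x)$, can fail to be a unit; multiplicativity of $\lvert \cdot \rvert_p$ finishes. You instead use only the single root of unity $y \colonequal \omega(x)$, the two-factor decomposition $x^{p^f} - x = (x - y)(S - 1)$ with $S = \sum_{k=0}^{p^f - 1} x^{p^f - 1 - k} y^k$, and the reduction $S \equiv p^f \cdot 1 \equiv 0 \pmod{\mathfrak m}$, which forces $\lvert S - 1 \rvert_p = 1$; your handling of the ramified case $p = 5$ (congruences taken modulo the uniformizer $\sqrt{5}$) matches the paper's definition of $\omega$, so the argument is uniform as you claim. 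Your version is more self-contained: it needs neither the full group of Teichm\"uller representatives in $\mathcal O_K$ nor the fact that distinct roots of unity remain distinct modulo the maximal ideal, only the defining congruence $\omega(x) \equiv x$ and that the residue field has characteristic $p$. What the paper's complete factorization buys in exchange is brevity and transparency: all $p^f$ linear factors are visible at once, and the lemma is read off as the statement that the entire $p$-adic mass of $x^{p^f} - x$ is concentrated in the single factor $x - \omega(x)$.
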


\begin{proof}
We use the factorization $x^{p^f} - x = x \prod_r (x - r)$ in $\mathcal O_K[x]$, where $r$ ranges over the $(p^f - 1)$th roots of unity.
Since $\size{x}_p = 1$, there is precisely one root of unity $r$ such that $\size{x - r}_p \neq 1$, namely $r = \omega(x)$.
Therefore $\lvert x^{p^f} - x \rvert_p = \lvert x - \omega(x) \rvert_p$.
\end{proof}

\begin{proposition}
Let $p$ be a prime such that $p \neq 2$ and $p \neq 5$.
Then $\sqrt{\lvert L(p - \epsilon) - 2 \epsilon \rvert_p} = \big\lvert\frac{\phi}{\omega(\phi)} - 1\big\rvert_p$.
\end{proposition}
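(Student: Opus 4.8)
The plan is to reduce both sides of the claimed equality to a power of the golden ratio $\phi$ and then match them. Throughout I use that $\size{\phi}_p = 1$: since $\phi \bar\phi = -1$ with $\phi, \bar\phi \in \mathcal O_K$, the ultrametric inequality forces $\size{\phi}_p = \size{\bar\phi}_p = 1$. The key algebraic input is the identity
\[
	L(p - \epsilon) - 2\epsilon = \frac{(\phi^{p - \epsilon} - \epsilon)^2}{\phi^{p - \epsilon}}.
\]
To derive it, note that $p - \epsilon$ is even (as $p$ is odd and $\epsilon = \pm 1$), so $\bar\phi^{p-\epsilon} = (\phi\bar\phi)^{p-\epsilon} \phi^{-(p-\epsilon)} = \phi^{-(p-\epsilon)}$; hence $L(p-\epsilon) = \phi^{p-\epsilon} + \phi^{-(p-\epsilon)}$, and subtracting $2\epsilon$ and clearing the denominator gives the identity after using $\epsilon^2 = 1$. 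Taking $p$-adic absolute values and using $\size{\phi^{p-\epsilon}}_p = 1$ yields $\sqrt{\size{L(p-\epsilon) - 2\epsilon}_p} = \size{\phi^{p-\epsilon} - \epsilon}_p$, so it remains to prove $\size{\phi^{p-\epsilon} - \epsilon}_p = \left\lvert\frac{\phi}{\omega(\phi)} - 1\right\rvert_p$.

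For the right-hand side, I would apply Lemma~\ref{Fermat quotient} with $x = \phi$. Factoring out units and using $\size{\omega(\phi)}_p = \size{\phi}_p = 1$ gives
\[
	\left\lvert\frac{\phi}{\omega(\phi)} - 1\right\rvert_p = \size{\phi - \omega(\phi)}_p = \size{\phi^{p^f} - \phi}_p = \size{\phi^{p^f - 1} - 1}_p.
\]
Thus the proposition reduces to the identity $\size{\phi^{p^f - 1} - 1}_p = \size{\phi^{p - \epsilon} - \epsilon}_p$, which I would verify according to the value of $\epsilon$.

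If $\epsilon = 1$ then $f = 1$, so $p^f - 1 = p - 1 = p - \epsilon$ and $\epsilon = 1$, and the two sides coincide term-for-term. The substantive case is $\epsilon = -1$, where $f = 2$ and $p - \epsilon = p + 1$, so I must show $\size{\phi^{p^2 - 1} - 1}_p = \size{\phi^{p+1} + 1}_p$. Put $v = \phi^{p+1}$ and $\delta = v + 1$. The identity above gives $\size{\delta}_p^2 = \size{\phi^{p+1} + 1}_p^2 = \size{L(p - \epsilon) - 2\epsilon}_p$, and since $L(p - \epsilon) - 2\epsilon \equiv 0 \mod p^2$ (established in the proof of the preceding proposition) we get $\size{\delta}_p \leq 1/p < 1$. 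Now expand $v^{p-1} = (-1 + \delta)^{p-1}$ by the binomial theorem: the constant term equals $(-1)^{p-1} = 1$, the linear term is $-(p-1)\delta$ with absolute value exactly $\size{\delta}_p$ (as $\size{p - 1}_p = 1$), and each term of degree $k \geq 2$ has absolute value at most $\size{\delta}_p^k \leq \size{\delta}_p^2 \leq \frac{1}{p}\size{\delta}_p < \size{\delta}_p$. The ultrametric inequality therefore isolates the linear term, so $\size{\phi^{p^2 - 1} - 1}_p = \size{v^{p-1} - 1}_p = \size{\delta}_p = \size{\phi^{p+1} + 1}_p$.

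I expect the case $\epsilon = -1$ to be the main obstacle. Its crux is the congruence $v = \phi^{p+1} \equiv -1 \mod p$ (equivalently $\size{\delta}_p < 1$), which is where the McIntosh--Roettger congruence enters — conceptually, this reflects that the residue-field Frobenius interchanges $\phi$ and $\bar\phi$, so $\phi^p \equiv \bar\phi$ and $\phi^{p+1} \equiv \phi\bar\phi = -1$ — followed by the ultrametric bookkeeping confirming that the linear term of $(-1 + \delta)^{p-1} - 1$ strictly dominates. Combining the three equalities $\sqrt{\size{L(p-\epsilon) - 2\epsilon}_p} = \size{\phi^{p-\epsilon} - \epsilon}_p = \size{\phi^{p^f - 1} - 1}_p = \left\lvert\frac{\phi}{\omega(\phi)} - 1\right\rvert_p$ then completes the proof.
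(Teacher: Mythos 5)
Your proof is correct and follows essentially the same route as the paper's: the same reduction of $\sqrt{\lvert L(p-\epsilon) - 2\epsilon\rvert_p}$ to $\lvert \phi^{p-\epsilon} - \epsilon\rvert_p$ (your identity $L(p-\epsilon) - 2\epsilon = (\phi^{p-\epsilon}-\epsilon)^2/\phi^{p-\epsilon}$ is the paper's factorization $-\epsilon(\phi^{p-\epsilon}-\epsilon)(\bar\phi^{p-\epsilon}-\epsilon) = L(p-\epsilon)-2\epsilon$ in disguise), the same conversion of the right-hand side via Lemma~\ref{Fermat quotient}, and the same case split on $\epsilon$. The only divergence is the $\epsilon = -1$ case, where the paper factors $\phi^{p^2-1}-1 = (\phi^{p+1}+1)\sum_{j=0}^{p-2}(-1)^{j+1}\phi^{(p+1)j}$ and checks that the geometric-series cofactor is a unit, while you expand $(-1+\delta)^{p-1}$ binomially and let the ultrametric inequality isolate the linear term---two phrasings of the same unit-cofactor computation, both resting on $\phi^{p+1} \equiv -1 \pmod{p}$, which the paper uses implicitly (via Frobenius) and you justify explicitly through the McIntosh--Roettger congruence.
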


\begin{proof}
First we note that $\left\lvert\bar\phi^{p - \epsilon} - \epsilon\right\rvert_p = \left\lvert\phi^{p - \epsilon} - \epsilon\right\rvert_p$, since
\[
	\left\lvert\bar\phi^{p - \epsilon} - \epsilon\right\rvert_p
	= \left\lvert\left(-\tfrac{1}{\phi}\right)^{p - \epsilon} - \epsilon\right\rvert_p
	= \left\lvert (-1)^{p - \epsilon} - \epsilon \phi^{p - \epsilon}\right\rvert_p
	= \left\lvert 1 - \epsilon \phi^{p - \epsilon}\right\rvert_p
	= \left\lvert\phi^{p - \epsilon} - \epsilon\right\rvert_p
\]
(using the fact that $p - \epsilon$ is even).
Since $L(n) = \phi^n + \bar\phi^n$, we have
\begin{align*}
	-\epsilon (\phi^{p - \epsilon} - \epsilon) (\bar\phi^{p - \epsilon} - \epsilon)
	&= -\epsilon (-1)^{p - \epsilon} + \phi^{p - \epsilon} + \bar\phi^{p - \epsilon} - \epsilon \\
	&= L(p - \epsilon) - 2 \epsilon.
\end{align*}
Taking the $p$-adic absolute value of both sides gives
$\left\lvert\phi^{p - \epsilon} - \epsilon\right\rvert_p^2 = \left\lvert L(p - \epsilon) - 2 \epsilon \right\rvert_p$.
Therefore it suffices to show that $\left\lvert\phi^{p - \epsilon} - \epsilon\right\rvert_p = \big\lvert\frac{\phi}{\omega(\phi)} - 1\big\rvert_p$.

If $p \equiv 1, 4 \mod 5$, then $\epsilon = 1$ and $f = 1$.
It follows from Lemma~\ref{Fermat quotient} that
$
	\lvert \phi^{p - 1} - 1 \rvert_p
	= \lvert \phi^p - \phi \rvert_p
	= \lvert \phi - \omega(\phi) \rvert_p
	= \big\lvert\frac{\phi}{\omega(\phi)} - 1\big\rvert_p
$.

If $p \equiv 2, 3 \mod 5$, then $\epsilon = -1$ and $f = 2$.
The geometric series formula gives
\[
	\frac{\phi^{p^2 - 1} - 1}{\phi^{p + 1} + 1}
	= \sum_{j = 0}^{p - 2} (-1)^{j + 1} \phi^{(p + 1) j}
	\equiv \sum_{j = 0}^{p - 2} (-1)^{j + 1} (-1)^j
	\equiv 1 \mod p.
\]
It follows that $\lvert \phi^{p + 1} + 1 \rvert_p = \lvert \phi^{p^2 - 1} - 1 \rvert_p = \lvert \phi^{p^2} - \phi \rvert_p$.
By Lemma~\ref{Fermat quotient},
$
	\lvert \phi^{p + 1} + 1 \rvert_p
	= \lvert \phi - \omega(\phi) \rvert_p
	= \big\lvert\frac{\phi}{\omega(\phi)} - 1\big\rvert_p
$.
\end{proof}

This concludes the proof of Theorem~\ref{Wall exponent characterization}.

The second main result of this section is that the Wall exponent is also the $p$-adic valuation of certain Lucas numbers.

\begin{theorem}\label{Wall exponent Lucas characterization}
Let $p$ be a prime such that $p \neq 2$, $p \neq 3$, and $p \neq 5$.
If $i$ is a Lucas zero with respect to $p$, then $\size{L(i)}_p = \size{F(p - \epsilon)}_p$.
\end{theorem}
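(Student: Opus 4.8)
The plan is to reduce everything to a valuation count involving the Teichmüller representative $\omega(\phi)$ and then to invoke Theorem~\ref{Wall exponent characterization}. First I would record that, since $p \neq 5$, the golden ratio $\phi$ is a unit in $\mathcal O_K$: from $\phi \bar\phi = -1$ with $\phi, \bar\phi \in \mathcal O_K$ we get $\size{\phi}_p \size{\bar\phi}_p = 1$ with both factors at most $1$, forcing $\size{\phi}_p = 1$. Writing $L(i) = \phi^i + \bar\phi^i = (\phi^{2i} + (-1)^i)/\phi^i$ and using $\size{\phi^i}_p = 1$, this gives $\size{L(i)}_p = \size{\phi^{2i} + (-1)^i}_p$. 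As in the proof of Proposition~\ref{Lucas zeros}, the hypothesis that $i$ is a Lucas zero is equivalent to $(-\phi^2)^i \equiv -1 \bmod p$.

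The key step is to replace $\phi$ by its Teichmüller representative inside the power. Set $w = \omega(\phi)$ and $u = \phi/w$, so that $w$ is a $(p^f - 1)$th root of unity, $u \equiv 1 \bmod p$, and $\size{u - 1}_p = \size{\phi/\omega(\phi) - 1}_p = \size{F(p - \epsilon)}_p = p^{-e}$ by Theorem~\ref{Wall exponent characterization}. Now $(-w^2)^i = (-1)^i w^{2i}$ is again a $(p^f - 1)$th root of unity (here I use that $p$ is odd, so $-1$ is such a root), and it is congruent to $(-\phi^2)^i \equiv -1 \bmod p$. Since distinct $(p^f - 1)$th roots of unity lie in distinct residue classes modulo $p$ — the very fact that makes $\omega$ well defined — this root of unity must equal $-1$, so $w^{2i} = (-1)^{i+1}$. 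Substituting $\phi^{2i} = w^{2i} u^{2i} = (-1)^{i+1} u^{2i}$ yields $\phi^{2i} + (-1)^i = (-1)^i(1 - u^{2i})$, and hence $\size{L(i)}_p = \size{u^{2i} - 1}_p$.

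It remains to show $\size{u^{2i} - 1}_p = \size{u - 1}_p = p^{-e}$, i.e.\ that raising to the $2i$ power does not change the valuation. Writing $u = 1 + t$ with $\nu_p(t) = e \geq 1$, the binomial theorem gives $u^{2i} - 1 = 2i\,t + \sum_{k \geq 2} \binom{2i}{k} t^k$, where every term with $k \geq 2$ has valuation at least $2e > e$. The linear term $2i\,t$ has valuation $\nu_p(2i) + e$, so the claim reduces to $p \nmid 2i$. This is exactly where the hypotheses $p \neq 2, 3$ enter, and I expect it to be the main point to get right: since $\pi(p) \mid p - \epsilon$ and $\alpha(p) \mid \pi(p)$, we have $\alpha(p) \mid p - \epsilon$, and as $\gcd(p, p - \epsilon) = 1$ this forces $p \nmid \alpha(p)$; because $p$ is odd and $p \neq 3$, the two possible Lucas zeros $\frac{\alpha(p)}{2}$ and $\frac{3\alpha(p)}{2}$ from Proposition~\ref{Lucas zeros} are then coprime to $p$ as well. (Indeed the conclusion genuinely fails for $p = 3$, where $i = 6$ is a Lucas zero with $\nu_3(L(6)) = 2 > 1 = \nu_3(F(4))$, so this coprimality cannot be skipped.) With $\nu_p(2i) = 0$ the linear term dominates, $\nu_p(u^{2i} - 1) = e$, and therefore $\size{L(i)}_p = p^{-e} = \size{F(p - \epsilon)}_p$.
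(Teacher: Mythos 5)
Your proof is correct, but it takes a genuinely different route from the paper's. The paper never touches the Teichm\"uller decomposition in this argument: it proves the intermediate identity $\size{L(i)}_p = \size{F(\alpha(p))}_p$ by writing $\size{F(\alpha(p))}_p = \size{\phi^{2\alpha(p)} - 1}_p$ and splitting into the cases of Proposition~\ref{Lucas zeros} --- for odd $i = \frac{\alpha(p)}{2}$ it uses the factorization $\phi^{2\alpha(p)} - 1 = (\phi^{\alpha(p)}-1)(\phi^{\alpha(p)}+1)$ together with $\phi^{\alpha(p)} \equiv 1 \bmod p$; for even $i = \frac{\alpha(p)}{2}$ the same factorization the other way around; and for $i = \frac{3\alpha(p)}{2}$ it factors $\phi^{3\alpha(p)}+1 = (\phi^{\alpha(p)}+1)(\phi^{2\alpha(p)}-\phi^{\alpha(p)}+1)$, the cofactor being a unit because it is $\equiv 3 \bmod p$ --- and it then finishes by asserting, without further justification, the repetition-law fact $\size{F(\alpha(p))}_p = \size{F(p-\epsilon)}_p$. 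You instead pin down $\omega(\phi)^{2i} = (-1)^{i+1}$ via uniqueness of the root of unity in a residue class, reduce the claim to $\size{u^{2i}-1}_p = \size{u-1}_p$ for $u = \phi/\omega(\phi)$, and prove that by a binomial valuation count hinging on $p \nmid 2i$; your bridge to $F(p-\epsilon)$ is Theorem~\ref{Wall exponent characterization}, which the paper has already proven, rather than the unproven repetition-law identity --- so your argument is actually more self-contained relative to the paper's own results. It is also uniform in the parity of $i$ (no case split on $\alpha(p) \bmod 4$), and your $p=3$ check makes the necessity of that hypothesis vivid; note that both arguments charge $p \neq 3$ to the same Lucas zero $\frac{3\alpha(p)}{2}$, appearing in the paper as the unit $3$ and in yours as $p \nmid 3\alpha(p)$. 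What the paper's route buys in exchange is that it needs no valuation estimate beyond congruences modulo $p$, and it yields the reformulation in terms of $F(\alpha(p))$, which is of independent interest. As a small streamlining of your final step: since $\size{u-1}_p \leq p^{-1} < p^{-1/(p-1)}$, the paper's Lemma~\ref{log size} gives $\size{u^{2i}-1}_p = \size{\log_p u^{2i}}_p = \size{2i}_p \size{\log_p u}_p = \size{2i}_p \size{u-1}_p$, which is exactly your dichotomy without the explicit binomial expansion.
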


For $p = 3$, the Lucas zeros are $2$ and $6$.
The Lucas zero $2$ does satisfy the conclusion, since $L(2) = 3 = F(4)$.
However, $6$ does not, since $\size{L(6)}_3 = \size{18}_3 = \frac{1}{9} < \frac{1}{3} = \size{F(4)}_3$.

\begin{proof}[Proof of Theorem~\ref{Wall exponent Lucas characterization}]
We show that $\size{L(i)}_p = \size{F(\alpha(p))}_p$.
The result will then follow from $\size{F(\alpha(p))}_p = \size{F(p - \epsilon)}_p$.

Since $i$ is a Lucas zero, Proposition~\ref{Lucas zeros} implies that $\alpha(p)$ is even.
Therefore
\[
	\size{F(\alpha(p))}_p
	= \left\lvert\frac{\phi^{\alpha(p)} - (-\phi^{-1})^{\alpha(p)}}{\sqrt{5}}\right\rvert_p
	= \left\lvert\frac{\phi^{\alpha(p)} - \phi^{-\alpha(p)}}{\sqrt{5}}\right\rvert_p
	= \big\lvert \phi^{2 \alpha(p)} - 1 \big\rvert_p.
\]
We consider two cases.

If $\alpha(p)$ is not divisible by $4$, then $i = \frac{\alpha(p)}{2}$ is odd.
Therefore
\[
	\size{L(i)}_p
	= \left\lvert\phi^i + (-\phi^{-1})^i\right\rvert_p
	= \left\lvert\phi^i - \phi^{-i}\right\rvert_p
	= \big\lvert \phi^{\alpha(p)} - 1 \big\rvert_p
	= \frac{
		\left\lvert \phi^{2 \alpha(p)} - 1 \right\rvert_p
	}{
		\left\lvert \phi^{\alpha(p)} + 1 \right\rvert_p
	}.
\]
Since $i$ is a Lucas zero, we have $\phi^{\alpha(p)} \equiv 1 \nequiv -1 \mod p$, which implies $\left\lvert \phi^{\alpha(p)} + 1 \right\rvert_p = 1$.
It follows that $\size{L(i)}_p = \size{F(\alpha(p))}_p$.

If $\alpha(p)$ is divisible by $4$, then $i \in \{\frac{\alpha(p)}{2}, \frac{3 \alpha(p)}{2}\}$, so $i$ is even.
Therefore
\[
	\size{L(i)}_p
	= \left\lvert\phi^i + (-\phi^{-1})^i\right\rvert_p
	= \left\lvert\phi^i + \phi^{-i}\right\rvert_p
	= \left\lvert \phi^{2 i} + 1 \right\rvert_p.
\]
If $i = \frac{\alpha(p)}{2}$, then
\[
	\size{L(i)}_p
	= \big\lvert \phi^{\alpha(p)} + 1 \big\rvert_p
	= \frac{
		\left\lvert \phi^{2 \alpha(p)} - 1 \right\rvert_p
	}{
		\left\lvert \phi^{\alpha(p)} - 1 \right\rvert_p
	}
	= \size{F(\alpha(p))}_p
\]
since $\size{L(i)}_p < 1$ implies $\left\lvert \phi^{\alpha(p)} - 1 \right\rvert_p = 1$.
On the other hand, if $i = \frac{3 \alpha(p)}{2}$, then
\[
	\size{L(i)}_p
	= \big\lvert \phi^{3 \alpha(p)} + 1 \big\rvert_p
	= \big\lvert \phi^{\alpha(p)} + 1 \big\rvert_p \big\lvert \phi^{2 \alpha(p)} - \phi^{\alpha(p)} + 1 \big\rvert_p.
\]
Since $\phi^{\alpha(p)} \equiv -1 \mod p$, we have $\phi^{2 \alpha(p)} - \phi^{\alpha(p)} + 1 \equiv 3 \nequiv 0 \mod p$ since $p \neq 3$.
It follows that $\size{L(i)}_p = \size{F(\alpha(p))}_p$.
\end{proof}

\section{Piecewise interpolation to the $p$-adic numbers}\label{p-adic}

In this section we interpolate $F(n)$ to $\Z_p$ so that we can work with continuous functions in Section~\ref{Proof of the main result}.
This same approach was used to prove the celebrated Skolem--Mahler--Lech theorem concerning the set of zeros of a constant-recursive sequence~\cite{Skolem, Mahler, Lech}.

The Fibonacci sequence cannot be interpolated to $\Z_p$ by a single continuous function, but there exists a finite set of continuous functions that comprise a piecewise interpolation to $\Z_p$.
These functions come from Binet's formula~\eqref{Binet} interpreted in $\Q_p(\sqrt 5)$.
For this, we need the $p$-adic logarithm and exponential functions, which are defined by their usual power series
\[
	\log_p(1 + x) \colonequal \sum_{m \geq 1} (-1)^{m + 1} \frac{x^m}{m}
	\qquad
	\text{and}
	\qquad
	\exp_p x \colonequal \sum_{m \geq 0} \frac{x^m}{m!}.
\]
The series $\log_p(1 + x)$ converges if $\lvert x \rvert_p < 1$, and $\exp_p x$ converges if $\lvert x \rvert_p < p^{-1/(p - 1)}$.
Moreover, $\log_p$ is an isomorphism from the multiplicative group $\{x : \lvert x - 1 \rvert_p < p^{-1/(p - 1)}\}$ to the additive group $\{x : \lvert x \rvert_p < p^{-1/(p - 1)}\}$, and its inverse map is $\exp_p$~\cite[Proposition~4.5.9 and Section~6.1]{Gouvea}.
In particular, if $\size{x - 1}_p < p^{-1/(p - 1)}$, then $x = \exp_p \log_p x$.

To interpolate $F(n)$, we use this last identity to rewrite $\phi^n$ and $\bar\phi^n$.
There are several ways to do this.
We use an approach that directly involves the root of unity $\omega(\phi)$, which plays a major role in the structure of the set $F(\Z_p)$.
Since $e \geq 1$ and $p \neq 2$, we have $\big\lvert \frac{\phi}{\omega(\phi)} - 1 \big\lvert_p < p^{-1/(p - 1)}$; this also follows from a more general result~\cite[Lemma~6]{Rowland--Yassawi}.
Therefore, for all $n \geq 0$, we have $\big(\frac{\phi}{\omega(\phi)}\big)^n = \exp_p \log_p\!\big(\big(\frac{\phi}{\omega(\phi)}\big)^n\big) = \exp_p\!\big(n \log_p \frac{\phi}{\omega(\phi)}\big)$.

\begin{theorem}[{Rowland--Yassawi~\cite[Theorem~15]{Rowland--Yassawi}}]\label{interpolation}
Let $p \neq 2$ be a prime, and let $p^f$ be the size of the residue field $\mathcal O_K/\{x \in K : |x|_p < 1\}$.
For each $i \in \{0, 1, \dots, p^f - 2\}$, define the function $F_i \colon \Z_p \to K$ by
\[
	F_i(x)
	= \frac{\omega(\phi)^i \exp_p\!\left(x \log_p \tfrac{\phi}{\omega(\phi)}\right) - \omega(\bar\phi)^i \exp_p\!\left(-x \log_p \tfrac{\phi}{\omega(\phi)}\right)}{\sqrt{5}}.
\]
Then $F_i(\Z_p) \subseteq \Z_p$, and $F(n) = F_{n \bmod (p^f - 1)}(n)$ for all $n \geq 0$.
\end{theorem}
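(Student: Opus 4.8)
The plan is to prove the two assertions separately: first the pointwise identity $F(n) = F_{n \bmod (p^f - 1)}(n)$, and then bootstrap it to the inclusion $F_i(\Z_p) \subseteq \Z_p$ by a density argument. Throughout I write $\psi$ for either $\phi$ or $\bar\phi$, so that the two summands of $F_i$ can be treated uniformly.

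First I would record the relations among the roots of unity. Since $\omega(\phi)\omega(\bar\phi)$ and $-1$ are both $(p^f - 1)$th roots of unity (here $-1$ is one because $p$ is odd) that are congruent to $\phi\bar\phi = -1$ modulo $p$, the uniqueness of the root of unity in a residue class forces $\omega(\phi)\omega(\bar\phi) = -1$. Consequently $\frac{\bar\phi}{\omega(\bar\phi)} = \bigl(\frac{\phi}{\omega(\phi)}\bigr)^{-1}$, so $\log_p \frac{\bar\phi}{\omega(\bar\phi)} = -\log_p \frac{\phi}{\omega(\phi)}$; this is exactly the sign appearing in the second summand of $F_i$, so that summand equals $\omega(\bar\phi)^i \exp_p\!\bigl(x \log_p \frac{\bar\phi}{\omega(\bar\phi)}\bigr)$.

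Next, for a fixed integer $n$ with $n \equiv i \pmod{p^f - 1}$ I would evaluate each summand using the functional equation $\exp_p(n \log_p y) = y^n$, valid for integer $n$ whenever $y$ lies in the ball on which $\log_p$ converges to an element of the domain of $\exp_p$ (as holds for $y = \frac{\psi}{\omega(\psi)}$). This gives
\[
	\omega(\psi)^i \exp_p\!\bigl(n \log_p \tfrac{\psi}{\omega(\psi)}\bigr)
	= \omega(\psi)^i \bigl(\tfrac{\psi}{\omega(\psi)}\bigr)^n
	= \omega(\psi)^{i - n} \psi^n
	= \psi^n,
\]
where the last equality uses $\omega(\psi)^{p^f - 1} = 1$ and $i \equiv n \pmod{p^f - 1}$. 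Substituting back and comparing with Binet's formula~\eqref{Binet} yields $F_i(n) = \frac{\phi^n - \bar\phi^n}{\sqrt{5}} = F(n)$, which is the claimed identity.

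Finally, for the inclusion I would argue by continuity. Each $F_i$ is continuous on $\Z_p$, since $x \mapsto x \log_p \frac{\phi}{\omega(\phi)}$ takes values in the domain of the continuous function $\exp_p$. The arithmetic progression $A_i = \{n \in \N : n \equiv i \pmod{p^f - 1}\}$ is dense in $\Z_p$: because $\gcd(p^f - 1, p) = 1$, the common difference $p^f - 1$ is a unit modulo every $p^\lambda$, so $A_i$ meets every residue class modulo every power of $p$. On $A_i$ the previous step gives $F_i(n) = F(n) \in \Z \subseteq \Z_p$. Since $\Z_p$ is closed in $K$ (it is closed in the one-dimensional, hence closed, $\Q_p$-subspace $\Q_p$ of $K$), continuity carries the dense image into $\overline{\Z} = \Z_p$, so $F_i(\Z_p) \subseteq \Z_p$. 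I expect the main obstacle to be precisely this last step: the content is not merely continuity but that the values land in $\Z_p$ rather than in some proper extension of $\Q_p$ inside $K$, which is what forces the use of the density of $A_i$ (not just of $\N$) together with the closedness of $\Z_p$ in $K$. By contrast, the root-of-unity bookkeeping in the first two steps is routine once $\omega(\phi)\omega(\bar\phi) = -1$ is established.
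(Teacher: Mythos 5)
Your proposal is correct and follows essentially the same route as the paper: the paper's justification after Theorem~\ref{interpolation} (and its full proof of the $p=2$ analogue, Theorem~\ref{interpolation p=2}) likewise rewrites $\phi^n = \omega(\phi)^n(\phi/\omega(\phi))^n$ using $\omega(\phi)\omega(\bar\phi) = \pm 1$ and $\exp_p\log_p$, checks agreement with Binet's formula on the arithmetic progression $n \equiv i \pmod{p^f-1}$, and deduces $F_i(\Z_p) \subseteq \Z_p$ by exactly your density-plus-continuity argument. Your only unverified ingredient, that $\lvert \phi/\omega(\phi) - 1\rvert_p < p^{-1/(p-1)}$, is the same fact the paper delegates to Rowland--Yassawi, so nothing essential is missing.
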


To see that $F_i(x) \in \Z_p$ if $x \in \Z_p$, take a sequence of integers $(x_m)_{m \geq 0}$ that converges to $x$ such that $x_m \equiv i \mod p^f - 1$ for each $m \geq 0$; since $F_i(x_m) = F(x_m) \in \Z$, it follows by continuity that $F_i(x) \in \Z_p$.

Note we have not defined $F_i$ for $p = 2$; the definition requires modification and will be discussed in Section~\ref{p = 2}.

Recall the observation in the proof of Proposition~\ref{Lucas zeros} that $\alpha(p)$ is the order of $-\phi^2$ modulo~$p$.
Along with Theorem~\ref{ratio}, this implies that $\omega(\phi)^{\pi(p)} = 1$ and $\omega(\bar\phi)^{\pi(p)} = 1$; Ballot and Elia give an alternate proof of this fact~\cite[Proposition~1.2]{Ballot--Elia}.
Therefore $F_{i + \pi(p)}(x) = F_i(x)$ for all $x \in \Z_p$, so we can reduce the number of functions in the piecewise interpolation.
Namely, the functions $F_i(x)$ for $i \in \{0, 1, \dots, \pi(p) - 1\}$ comprise an interpolation.

In Section~\ref{Proof of the main result}, we will use Proposition~\ref{isomorphism} below, which refines the isomorphism $\log_p$.
To prove it, we need two lemmas, whose proofs we include for completeness.
The first is the lifting-the-exponent lemma.

\begin{lemma}\label{powers}
Let $e \geq 0$ be an integer.
If $a, b \in \mathcal O_K$ such that $\lvert a - b \rvert_p < p^{-1/(p - 1)}$, then $\left\lvert a^{p^e} - b^{p^e} \right\rvert_p < p^{-e - 1/(p - 1)}$.
\end{lemma}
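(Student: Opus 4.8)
The plan is to induct on $e$, reducing the whole statement to a single estimate controlling the effect of the $p$-th power map. The base case $e = 0$ is exactly the hypothesis $\size{a - b}_p < p^{-1/(p-1)}$. For the inductive step I would set $A = a^{p^e}$ and $B = b^{p^e}$, which again lie in $\mathcal O_K$ since $\mathcal O_K$ is a ring, and show that passing from the pair $A, B$ to the pair $A^p, B^p$ shrinks the distance by a factor of at least $p$. Concretely, the key claim to isolate is the following: if $A, B \in \mathcal O_K$ with $\size{A - B}_p < p^{-1/(p-1)}$, then $\size{A^p - B^p}_p \le p^{-1}\, \size{A - B}_p$.

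To prove this single-step claim I would write $\delta = A - B$ and expand $A^p = (B + \delta)^p$ by the binomial theorem, so that $A^p - B^p = \sum_{k=1}^p \binom{p}{k} B^{p-k}\delta^k$. The terms split into two types. For $1 \le k \le p-1$ the binomial coefficient is divisible by $p$, so each such term satisfies $\size{\binom{p}{k} B^{p-k}\delta^k}_p \le p^{-1}\size{\delta}_p^k \le p^{-1}\size{\delta}_p$, using $\size{B}_p \le 1$ and $\size{\delta}_p \le 1$. The remaining term is $\delta^p$, and this is where the hypothesis enters decisively: from $\size{\delta}_p < p^{-1/(p-1)}$ we get $\size{\delta}_p^{\,p-1} < p^{-1}$, hence $\size{\delta^p}_p < p^{-1}\size{\delta}_p$. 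The ultrametric inequality then bounds the whole sum by the maximum of the term sizes, giving $\size{A^p - B^p}_p \le p^{-1}\size{\delta}_p$ as claimed.

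With the single-step claim in hand the induction closes immediately. Assuming $\size{a^{p^e} - b^{p^e}}_p < p^{-e - 1/(p-1)}$, this bound is in particular strictly less than $p^{-1/(p-1)}$, so the claim applies with $A = a^{p^e}$ and $B = b^{p^e}$ and yields $\size{a^{p^{e+1}} - b^{p^{e+1}}}_p \le p^{-1}\,\size{a^{p^e} - b^{p^e}}_p < p^{-(e+1) - 1/(p-1)}$, which is exactly the statement for $e+1$.

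The only genuine subtlety, and the point I would flag as the crux, is the precise threshold $p^{-1/(p-1)}$: it is exactly the radius at which $\size{\delta}_p^{\,p-1} < p^{-1}$, and this is what guarantees that the $\delta^p$ term cannot dominate the $p$-divisible terms in the binomial expansion. Everything else is routine bookkeeping with the ultrametric inequality. I would note that this is the same constant that governs the domain of convergence of $\exp_p$ recalled in the previous section, so the lemma is precisely calibrated for its intended use in the interpolation.
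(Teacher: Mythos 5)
Your proof is correct, and it shares the paper's overall skeleton: induct on $e$, with all the content concentrated in a one-step estimate for the $p$-th power map. The one step itself is done differently. The paper factors $a^p - b^p = (a - b)\sum_{j=0}^{p-1} a^{p-1-j} b^j$ and argues that the cofactor equals $p b^{p-1}$ plus an error of absolute value at most $\lvert a - b \rvert_p$, hence that the cofactor has absolute value at most $p^{-1}$; you instead expand $(B + \delta)^p$ by the binomial theorem and bound termwise, via $p \mid \binom{p}{k}$ for $1 \le k \le p-1$ and $\lvert \delta \rvert_p^{p-1} < p^{-1}$ for the $k = p$ term. These are equivalent computations, but your formulation of the step as a multiplicative contraction $\lvert A^p - B^p \rvert_p \le p^{-1} \lvert A - B \rvert_p$ buys a little extra robustness: it is valid for any value group. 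By contrast, the paper's concluding bound $\lvert p b^{p-1} + O(p^i) \rvert_p \le p^{-1}$ requires, at the base step $i = 0$ of its induction, the inference that a quantity of absolute value strictly less than $p^{-1/(p-1)}$ automatically has absolute value at most $p^{-1}$; for odd $p$ this uses that $\lvert \cdot \rvert_p$ takes values in $p^{\Z}$, i.e., that $K$ is unramified over $\Q_p$. That hypothesis holds in every case where the paper applies the lemma, but your argument also handles the ramified case $K = \Q_5(\sqrt{5})$ verbatim, so it proves the lemma exactly as stated, with no implicit restriction.
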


\begin{proof}
Assume $i \geq 0$ and $\lvert a - b \rvert_p < p^{-i - 1/(p - 1)}$; we show that this implies $\lvert a^p - b^p \rvert_p < p^{-(i + 1) - 1/(p - 1)}$.
The statement will then follow inductively.
Write $a = b + O(p^i)$.
We have
\begin{align*}
	\frac{a^p - b^p}{a - b}
	&= \sum_{j = 0}^{p - 1} a^{p - 1 - j} b^j
	= \sum_{j = 0}^{p - 1} \left(b + O(p^i)\right)^{p - 1 - j} b^j \\
	&= \sum_{j = 0}^{p - 1} \left(b^{p - 1} + O(p^i)\right)
	= p b^{p - 1} + O(p^i).
\end{align*}
Therefore
\begin{align*}
	\left\lvert a^p - b^p \right\rvert_p
	&= \lvert a - b \rvert_p \cdot \left\lvert p b^{p - 1} + O(p^i) \right\rvert_p \\
	&< p^{-i - 1/(p - 1)} \cdot p^{-1}.
	\qedhere
\end{align*}
\end{proof}

The second is a special case of the isometry property of the $p$-adic logarithm.

\begin{lemma}\label{log size}
If $x \in \mathcal O_K$ and $\lvert x \rvert_p < p^{-1/(p - 1)}$, then $\left\lvert\log_p(1 + x)\right\rvert_p = \left\lvert x \right\rvert_p$.
\end{lemma}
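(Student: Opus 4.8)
The plan is to show that in the defining series $\log_p(1 + x) = \sum_{m \geq 1} (-1)^{m+1} x^m/m$, the linear term $x$ strictly dominates every later term in $p$-adic absolute value. The series converges because $\size{x}_p < 1$, and since $K$ carries a non-Archimedean absolute value, once we know $\size{x^m/m}_p < \size{x}_p$ for every $m \geq 2$, the ``strongest wins'' property of $\size{\cdot}_p$ (a convergent sum with a unique term of maximal absolute value has that absolute value) immediately gives $\size{\log_p(1 + x)}_p = \size{x}_p$.

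To carry this out, I would set $c = \nu_p(x)$, so that the hypothesis $\size{x}_p < p^{-1/(p-1)}$ reads $c > 1/(p-1)$; here $c$ is a possibly non-integer rational since $K/\Q_p$ may be ramified, but this causes no difficulty. For $m \geq 2$ we have
\[
	\size{x^m/m}_p = \size{x}_p^m \cdot p^{\nu_p(m)} = p^{-cm + \nu_p(m)},
\]
so the inequality $\size{x^m/m}_p < \size{x}_p = p^{-c}$ is equivalent to the numerical statement $\nu_p(m) < c(m-1)$.

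The key step is the elementary estimate $\nu_p(m) \leq \frac{m-1}{p-1}$, valid for every integer $m \geq 1$. To prove it, write $k = \nu_p(m)$; then $m \geq p^k$, so
\[
	\frac{m-1}{p-1} \geq \frac{p^k - 1}{p-1} = 1 + p + \cdots + p^{k-1} \geq k = \nu_p(m).
\]
Combining this with $c > 1/(p-1)$ and $m - 1 \geq 1 > 0$ yields $c(m-1) > \frac{m-1}{p-1} \geq \nu_p(m)$, which is exactly the strict inequality required.

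I expect the only delicate point to be the \emph{strictness} of this domination: it is the strict gap $c > 1/(p-1)$ (rather than $\geq$) that makes the first inequality above strict, and so guarantees that the leading term genuinely beats all the others. In the boundary case $c = 1/(p-1)$, which the hypothesis excludes, the term $m = p$ would satisfy $\nu_p(p) = 1 = c(p-1)$ and hence tie the leading term, so the conclusion could fail. This is precisely where the radius-of-convergence hypothesis $\size{x}_p < p^{-1/(p-1)}$ is used, and it is the place I would be most careful to state clearly.
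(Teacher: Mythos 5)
Your proof is correct and takes essentially the same route as the paper's: both arguments show that the linear term strictly dominates every term $x^m/m$ with $m \geq 2$ via the estimate $\nu_p(m) \leq \frac{m-1}{p-1}$ (equivalently $\lvert m \rvert_p \geq p^{-(m-1)/(p-1)}$) and then conclude with the ultrametric inequality. The only cosmetic differences are that the paper factors out $x$ and shows the remaining factor has absolute value $1$, whereas you compare each term directly against the leading one, and that you prove the elementary valuation estimate which the paper merely asserts.
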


\begin{proof}
The power series for $\log_p$ gives
\[
	\left\lvert\log_p(1 + x)\right\rvert_p
	= \lvert x \rvert_p \cdot \left\lvert 1 + \sum_{m \geq 2} (-1)^{m + 1} \frac{x^{m - 1}}{m} \right\rvert_p.
\]
Since $\lvert m \rvert_p \geq p^{-\frac{m - 1}{p - 1}}$ for each $m \geq 1$, for $m \geq 2$ we have $\left\lvert \frac{x^{m - 1}}{m} \right\rvert_p < \frac{1}{\lvert m \rvert_p} p^{-\frac{m - 1}{p - 1}} \leq 1$.
Therefore
\[
	\left\lvert 1 + \sum_{m \geq 2} (-1)^{m + 1} \frac{x^{m - 1}}{m} \right\rvert_p
	= 1
\]
by the ultrametric inequality.
\end{proof}

\begin{proposition}\label{isomorphism}
For each integer $e \geq 0$, $\exp_p$ is a group isomorphism from $\{x : \lvert x \rvert_p < p^{-e - 1/(p - 1)}\}$ to $\{x : \lvert x - 1 \rvert_p < p^{-e - 1/(p - 1)}\}$.
\end{proposition}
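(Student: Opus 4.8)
The plan is to bootstrap from the $e = 0$ case, which is exactly the isomorphism recalled just before the proposition: $\exp_p$ maps $\{x : \lvert x \rvert_p < p^{-1/(p-1)}\}$ bijectively onto $\{x : \lvert x - 1 \rvert_p < p^{-1/(p-1)}\}$, with inverse $\log_p$. For every $e \geq 0$ the two prescribed sets are contained in their $e = 0$ counterparts, so it suffices to show that this established bijection restricts to a bijection between the smaller sets. The restriction is automatically a group homomorphism, and the smaller sets are balls — hence subgroups under addition and multiplication, respectively — so a bijective homomorphism between them is an isomorphism. Concretely, I would prove two containments: (a) $\exp_p$ sends $\{\lvert x \rvert_p < p^{-e - 1/(p-1)}\}$ into $\{\lvert x - 1 \rvert_p < p^{-e - 1/(p-1)}\}$, and (b) $\log_p$ sends the latter into the former. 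Since $\exp_p$ and $\log_p$ are mutually inverse on the $e = 0$ sets, applying $\exp_p$ to (b) gives the reverse inclusion $T_e \subseteq \exp_p(S_e)$, which together with (a) forces $\exp_p(S_e) = T_e$, where $S_e$ and $T_e$ denote the smaller domain and codomain; injectivity is inherited from the $e = 0$ map.

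For containment (b) I would invoke Lemma~\ref{log size} directly. If $\lvert z - 1 \rvert_p < p^{-e - 1/(p-1)}$, write $z = 1 + u$ with $\lvert u \rvert_p < p^{-e - 1/(p-1)} \leq p^{-1/(p-1)}$; then $\lvert \log_p(1 + u) \rvert_p = \lvert u \rvert_p < p^{-e - 1/(p-1)}$, as required.

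Containment (a) is where Lemma~\ref{powers} enters, and it is the crux of the argument. The key observation is that shrinking the radius by a factor of $p^e$ corresponds to extracting a $p^e$-th power. Given $y$ with $\lvert y \rvert_p < p^{-e - 1/(p-1)}$, set $w = \exp_p(y / p^e)$; this is legitimate because $\lvert y / p^e \rvert_p = p^e \lvert y \rvert_p < p^{-1/(p-1)}$, and $w$ lands in the $e = 0$ codomain, so $\lvert w - 1 \rvert_p < p^{-1/(p-1)}$ and $w \in \mathcal O_K$. Using that $\exp_p$ turns addition into multiplication, $\exp_p(y) = \exp_p\!\left(p^e \cdot \tfrac{y}{p^e}\right) = w^{p^e}$. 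Applying Lemma~\ref{powers} with $a = w$ and $b = 1$ then yields $\lvert \exp_p(y) - 1 \rvert_p = \lvert w^{p^e} - 1 \rvert_p < p^{-e - 1/(p-1)}$, which is exactly (a).

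The main obstacle, and the only nonroutine step, is spotting the factorization $\exp_p(y) = \left(\exp_p(y / p^e)\right)^{p^e}$, which reduces the inclusion to a single application of Lemma~\ref{powers}; everything else is ultrametric bookkeeping layered on top of the $e = 0$ isomorphism. I would also be careful to record explicitly that the smaller sets are subgroups — a ball around $0$ is closed under addition and negation, and a ball of radius $< 1$ around $1$ is closed under multiplication and inversion by the ultrametric inequality — so that the restricted bijective homomorphism genuinely earns the name \emph{isomorphism}.
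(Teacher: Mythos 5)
Your proposal is correct and follows essentially the same route as the paper's own proof: the $\subseteq$ direction via the factorization $\exp_p(x) = \bigl(\exp_p(x/p^e)\bigr)^{p^e}$ combined with Lemma~\ref{powers} (with $b = 1$), and the $\supseteq$ direction via $\log_p$ and Lemma~\ref{log size}. The only difference is that you spell out the routine group-theoretic bookkeeping (balls being subgroups, restriction of a bijective homomorphism) which the paper leaves implicit.
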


\begin{proof}
Since $\exp_p$ is a group isomorphism from $\{x : \lvert x \rvert_p < p^{-1/(p - 1)}\}$ to $\{x : \lvert x - 1 \rvert_p < p^{-1/(p - 1)}\}$, to prove the proposition it suffices to show that
\[
	\left\{\exp_p(x) : \lvert x \rvert_p < p^{-e - 1/(p - 1)}\right\}
	= \left\{x : \lvert x - 1 \rvert_p < p^{-e - 1/(p - 1)}\right\}.
\]

To show $\subseteq$, let $x \in \mathcal O_K$ such that $\lvert x \rvert_p < p^{-e - 1/(p - 1)}$; then $\lvert \frac{x}{p^e} \rvert_p < p^{-1/(p - 1)}$, so $\frac{x}{p^e}$ is in the domain of $\exp_p$.
We have
\[
	\exp_p(x)
	= \exp_p(\underbrace{\tfrac{x}{p^e} + \tfrac{x}{p^e} + \dots + \tfrac{x}{p^e}}_{p^e})
	= \left(\exp_p(\tfrac{x}{p^e})\right)^{p^e}.
\]
Since $\lvert \exp_p(\frac{x}{p^e}) - 1 \rvert_p < p^{-1/(p - 1)}$, Lemma~\ref{powers} implies $\big\lvert (\exp_p(\frac{x}{p^e}))^{p^e} - 1 \big\rvert_p < p^{-e - 1/(p - 1)}$.

To show $\supseteq$, let $y \in \mathcal O_K$ such that $\lvert y - 1 \rvert_p < p^{-e - 1/(p - 1)}$.
Let $x = \log_p y$.
By Lemma~\ref{log size}, $\lvert x \rvert_p < p^{-e - 1/(p - 1)}$, and $x$ satisfies $\exp_p(x) = y$.
\end{proof}

\section{Proof of the main result}\label{Proof of the main result}

In this section we prove Theorem~\ref{main theorem}, which establishes the value of $\dens(p)$ for $p \neq 2$.
This density is equal to the measure of the closure of the set of Fibonacci numbers in the $p$-adic integers $\Z_p$.
That is, let $\mu$ be the Haar measure on $\Z_p$ defined by $\mu(m + p^\lambda \Z_p) = p^{-\lambda}$;
then $\dens(p) = \mu(\overline{F(\N)})$.

As an illustrative warm-up example, we determine the limiting density of residues attained by the set of squares modulo powers of $p$ for $p \neq 2$.
Let $z \in \Z_p$, and define $\lambda \in \Z$ by $\size{z}_p = \frac{1}{p^\lambda}$.
If $\lambda$ is odd, then $z$ does not have a square root in $\Z_p$.
If $\lambda$ is even, then $z$ has a square root in $\Z_p$ (more precisely, in $p^{\lambda/2} \Z_p$) if and only if $\frac{z}{p^\lambda} \bmod p$ is a quadratic residue, by Hensel's lemma.
Since there are $\frac{p - 1}{2}$ nonzero quadratic residues modulo~$p$, the image of $\Z_p$ under $x \mapsto x^2$ has measure
\[
	\sum_{\substack{\lambda \geq 0 \\ \text{$\lambda$ even}}} \frac{p - 1}{2} \cdot \frac{1}{p^{\lambda + 1}}
	= \frac{p}{2 (p + 1)}.
\]
The term $\frac{Z(p)}{2 p^{2 e - 1} (p + 1)}$ in Theorem~\ref{main theorem} will arise in a similar way, since the Fibonacci sequence satisfies a second-order recurrence.

\begin{notation*}
For $p \neq 2$, we write the function $F_i(x)$ in Theorem~\ref{interpolation} as the composition $F_i(x) = g_i(h_i(x))$ where $g_i(y) = \frac{y - (-1)^i y^{-1}}{\sqrt{5}}$ and $h_i(x) = \omega(\phi)^i \exp_p\!\left(x \log_p \frac{\phi}{\omega(\phi)}\right)$.
We use this notation throughout this section.
The function $g_i$ is close enough to a polynomial that we will be able to apply Hensel's lemma.
\end{notation*}

To prove Theorem~\ref{main theorem}, we describe the set $F_i(\Z_p)$.
In Proposition~\ref{subset} we show that this set is contained in $F(i) + p^e \Z_p$, where $e$ is defined by $\size{\frac{\phi}{\omega(\phi)} - 1}_p = \frac{1}{p^e}$.
For Lucas non-zeros $i$, Proposition~\ref{Lucas non-zero image} shows that in fact $F_i(\Z_p) = F(i) + p^e \Z_p$.
It follows that the Lucas non-zeros contribute measure $\frac{N(p)}{p^e}$ in Theorem~\ref{main theorem}.
Moreover, they account for the full subtrees rooted at level $1$ in Figure~\ref{p=7 tree} and analogous trees for other primes.

When $i$ is a Lucas zero, the description of the set $F_i(\Z_p)$ is more complicated, since it is not a cylinder set.
We will show that partial branching of the kind pictured in Figure~\ref{p=7 tree} occurs through the tree along edges corresponding to the $p$-adic digits of $\omega(\phi)^i \frac{2}{\sqrt{5}}$.
For $p = 7$, there are two such paths, for the $7$-adic numbers
\begin{align*}
	\omega(\phi)^4 \tfrac{2}{\sqrt{5}} &= 3 + \frac{0}{7^{-1}} + \frac{3}{7^{-2}} + \frac{4}{7^{-3}} + \frac{3}{7^{-4}} + \frac{6}{7^{-5}} + \frac{4}{7^{-6}} + \frac{1}{7^{-7}} + \cdots \\
	\omega(\phi)^{12} \tfrac{2}{\sqrt{5}} &= 4 + \frac{6}{7^{-1}} + \frac{3}{7^{-2}} + \frac{2}{7^{-3}} + \frac{3}{7^{-4}} + \frac{0}{7^{-5}} + \frac{2}{7^{-6}} + \frac{5}{7^{-7}} + \cdots
\end{align*}
(where we place each power of $7$ in the denominator to make the digits easier to read).
This partial branching is due to a double root of $y^2 - \sqrt{5} z y - (-1)^i = 0$ (equivalently, $g_i(y) = z$) when $z = \omega(\phi)^i \frac{2}{\sqrt{5}}$.
By computing digits of $\omega(\phi)^i \frac{2}{\sqrt{5}}$, we can use the set of quadratic residues modulo~$p$ to quickly construct levels $\lambda \geq 2$ of the tree of attained residues.
The construction follows from Lemma~\ref{Lucas zero image}, and Proposition~\ref{Lucas zero measure} establishes $\mu(F_i(\Z_p))$ for Lucas zeros $i$.

We begin the proof of Theorem~\ref{main theorem} with a lemma allowing us to conclude that certain numbers belong to $\exp_p(p \sqrt{5} \Z_p)$.
As before, let $\bar\phi = \frac{1 - \sqrt 5}{2}$.
In the case $p \equiv 2, 3 \mod 5$, we generalize this notation as follows.
If $x = a + b \sqrt{5}$ for some $a, b \in \Q_p$, define $\bar{x} = a - b \sqrt{5}$.

\begin{lemma}\label{pure square root 5}
Let $p \equiv 2, 3 \mod 5$.
If $x \in \mathcal O_K$ such that $x \bar{x} = 1$ and $\size{x - 1}_p < p^{-1/(p - 1)}$, then $\log_p x \in p \sqrt{5} \Z_p$.
\end{lemma}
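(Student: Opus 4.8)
The plan is to exploit the interaction between the conjugation $x \mapsto \bar{x}$ and the $p$-adic logarithm. Since $\size{x - 1}_p < p^{-1/(p-1)} < 1$, the series $\log_p x$ converges; and because conjugation is a $\Q_p$-automorphism of $K$, it is an isometry (the absolute value extending $\size{\cdot}_p$ on $\Q_p$ is unique), so $\size{\bar{x} - 1}_p = \size{x - 1}_p$ and $\log_p \bar{x}$ converges as well. The first step I would record is that conjugation commutes with $\log_p$: as conjugation is a continuous ring homomorphism fixing $\Q \subseteq \Q_p$ and $\log_p$ is defined by a power series with rational coefficients, applying conjugation term by term gives $\overline{\log_p x} = \log_p \bar{x}$.

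Next I would invoke the fact that $\log_p$ is a homomorphism on the group $\{x : \size{x - 1}_p < p^{-1/(p-1)}\}$, to which both $x$ and $\bar{x}$ belong. This yields $\log_p x + \log_p \bar{x} = \log_p(x \bar{x}) = \log_p 1 = 0$, so $\log_p \bar{x} = -\log_p x$. Combining with the previous step, $\overline{\log_p x} = -\log_p x$. Writing $\log_p x = a + b \sqrt{5}$ with $a, b \in \Q_p$, the left-hand side is $a - b \sqrt{5}$ and the right-hand side is $-a - b \sqrt{5}$; equating the $\Q_p$-components forces $a = 0$. Hence $\log_p x = b \sqrt{5}$, that is, $\log_p x$ already lies in the $\Q_p \sqrt{5}$ direction.

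It then remains to control the valuation of $b$. By Lemma~\ref{log size}, $\size{\log_p x}_p = \size{x - 1}_p < p^{-1/(p-1)}$, and since $(\sqrt{5})^2 = 5$ is a $p$-adic unit (because $p \neq 5$) we have $\size{\sqrt{5}}_p = 1$, so $\size{b}_p = \size{\log_p x}_p < p^{-1/(p-1)}$. As $b \in \Q_p$, its valuation $\nu_p(b)$ is an integer with $\nu_p(b) > 1/(p-1)$; for $p \neq 2$ we have $1/(p-1) < 1$, so the smallest integer exceeding it is $1$, giving $\nu_p(b) \geq 1$, i.e.\ $b \in p \Z_p$. Therefore $\log_p x = b \sqrt{5} \in p \sqrt{5} \Z_p$, as claimed.

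The step needing the most care is the commutation $\overline{\log_p x} = \log_p \bar{x}$: I must justify that conjugation can be applied term by term to the convergent series, which rests on conjugation being an isometric (hence continuous) $\Q_p$-algebra automorphism, so that it preserves convergence and passes through the limit. Everything else is bookkeeping, the one quantitative subtlety being the integrality argument that upgrades the strict bound $\nu_p(b) > 1/(p-1)$ to $\nu_p(b) \geq 1$.
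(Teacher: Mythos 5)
Your proposal is correct and follows essentially the same route as the paper's proof: use the homomorphism property to get $\log_p \bar{x} = -\log_p x$, commute conjugation with $\log_p$ to kill the $\Q_p$-component, and apply Lemma~\ref{log size} to force the coefficient of $\sqrt{5}$ into $p\Z_p$. The only cosmetic differences are that the paper gets $\size{\bar{x}-1}_p < p^{-1/(p-1)}$ from $\bar{x} = x^{-1}$ rather than from conjugation being an isometry, and that you spell out the term-by-term conjugation of the series, which the paper leaves implicit.
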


\begin{proof}
In addition to $\size{x - 1}_p < p^{-1/(p - 1)}$, we have $\size{\bar{x} - 1}_p = \left\lvert\frac{1}{x} - 1\right\rvert_p = \size{1 - x}_p < p^{-1/(p - 1)}$.
Since $\log_p$ is a group isomorphism from $\{x : \lvert x - 1 \rvert_p < p^{-1/(p - 1)}\}$ to $\{x : \lvert x \rvert_p < p^{-1/(p - 1)}\}$, it follows that $\log_p x + \log_p \bar{x} = \log_p(x \bar{x}) = \log_p 1 = 0$.
Write $\log_p x = a + b \sqrt{5}$ where $a, b \in \Z_p$.
Then
\[
	a - b \sqrt{5}
	= \overline{a + b \sqrt{5}}
	= \log_p \bar{x}
	= -\log_p x
	= -a - b \sqrt{5},
\]
so $a = 0$.
Finally, $b \equiv 0 \mod p$ because $\left\lvert\log_p x\right\rvert_p = \size{x - 1}_p < p^{-1/(p - 1)}$ by Lemma~\ref{log size}.
\end{proof}

Next we use Lemma~\ref{pure square root 5} to describe the image of $\Z_p$ under $h_i$.

\begin{lemma}\label{image under h}
Let $p$ be a prime such that $p \neq 2$ and $p \neq 5$, and define $e \geq 1$ by $\big\lvert\frac{\phi}{\omega(\phi)} - 1\big\rvert_p = \frac{1}{p^e}$.
If $i \in \{0, 1, \dots, \pi(p) - 1\}$, then $h_i(\Z_p) = \omega(\phi)^i \exp_p(p^e \sqrt{5} \Z_p)$.
\end{lemma}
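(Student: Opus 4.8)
The plan is to reduce the claim to a single equality of argument sets for $\exp_p$, and then to prove that equality according to whether or not $\sqrt 5$ lies in $\Q_p$. Writing $L \colonequal \log_p \tfrac{\phi}{\omega(\phi)}$, the definition of $h_i$ gives $h_i(\Z_p) = \omega(\phi)^i \exp_p(\Z_p \cdot L)$, where $\Z_p \cdot L = \{x L : x \in \Z_p\}$; so it suffices to prove the single identity $\Z_p \cdot L = p^e \sqrt 5 \Z_p$, since applying $\exp_p$ to both sides and multiplying by $\omega(\phi)^i$ then yields the lemma. Every element of $\Z_p \cdot L$ has absolute value at most $\size{L}_p$, and $\size{L}_p < p^{-1/(p - 1)}$ because $e \geq 1 > \tfrac{1}{p - 1}$, so $\exp_p$ is defined throughout. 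Before splitting into cases I would record two valuations: since $\size{\tfrac{\phi}{\omega(\phi)} - 1}_p = \tfrac{1}{p^e}$, Lemma~\ref{log size} gives $\size{L}_p = \tfrac{1}{p^e}$; and since $p \neq 5$, the element $\sqrt 5$ is a unit, so $\size{p^e \sqrt 5}_p = \tfrac{1}{p^e}$ as well.

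For $p \equiv 1, 4 \bmod 5$ the identity is immediate. Here $K = \Q_p$ contains $\sqrt 5$ as a unit, and $L \in \Q_p$ with $\size{L}_p = p^{-e}$, so $L = p^e u$ for some $u \in \Z_p^\times$. Thus $\Z_p \cdot L = p^e \Z_p = p^e \sqrt 5 \Z_p$, and the reduction above finishes this case.

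The substance lies in the case $p \equiv 2, 3 \bmod 5$, where $K = \Q_p(\sqrt 5)$ is a quadratic extension and $L$ may a priori have a nonzero $\Q_p$-component. The plan is to apply Lemma~\ref{pure square root 5} to $x = \tfrac{\phi}{\omega(\phi)}$ to show that this component vanishes. The hypothesis $\size{x - 1}_p < p^{-1/(p - 1)}$ is already known from the normalization used to define the interpolating functions, so the crux is to verify $x \bar{x} = 1$. For this I would use the multiplicativity of $\omega$ (each of $\omega(y)\omega(z)$ and $\omega(yz)$ is a root of unity congruent to $yz$ modulo the maximal ideal, hence they coincide by uniqueness) together with the fact that the nontrivial automorphism of $K/\Q_p$ is an isometry, so it sends $\omega(\phi)$ to the root of unity congruent to $\bar\phi$, namely $\omega(\bar\phi)$; thus $\bar{x} = \tfrac{\bar\phi}{\omega(\bar\phi)}$. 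Combining $\phi \bar\phi = -1$ with $\omega(\phi)\omega(\bar\phi) = \omega(\phi\bar\phi) = \omega(-1) = -1$ gives $x \bar{x} = \tfrac{-1}{-1} = 1$. Lemma~\ref{pure square root 5} then yields $L \in p \sqrt 5 \Z_p$; writing $L = b \sqrt 5$ with $b \in p\Z_p$ and using $\size{L}_p = \size{b}_p = p^{-e}$ gives $b = p^e s$ with $s \in \Z_p^\times$, whence $L = p^e \sqrt 5 \, s$ and $\Z_p \cdot L = p^e \sqrt 5 \Z_p$.

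I expect the verification of $x \bar{x} = 1$ to be the main obstacle, since it is the only step that requires the finer structure of $\omega$ and of the conjugation on $K$. Once that identity is in hand, Lemma~\ref{pure square root 5} does the essential work of locating $L$ on the line $\sqrt 5 \Z_p$, and the remaining valuation bookkeeping is routine.
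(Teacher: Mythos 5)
Your proposal is correct and takes essentially the same route as the paper's proof: reduce the claim to $\Z_p \cdot \log_p \tfrac{\phi}{\omega(\phi)} = p^e \sqrt{5}\, \Z_p$, settle $p \equiv 1, 4 \bmod 5$ using that $\sqrt{5}$ is a unit in $\Z_p$, and settle $p \equiv 2, 3 \bmod 5$ by applying Lemma~\ref{pure square root 5} to $x = \tfrac{\phi}{\omega(\phi)}$ via the identity $\tfrac{\phi}{\omega(\phi)} \cdot \tfrac{\bar\phi}{\omega(\bar\phi)} = 1$, with Lemma~\ref{log size} supplying the exact valuation. The only difference is that you make explicit two facts the paper leaves implicit (multiplicativity of $\omega$ and that the conjugation isometry sends $\omega(\phi)$ to $\omega(\bar\phi)$, so that $\bar{x} = \tfrac{\bar\phi}{\omega(\bar\phi)}$), which is a harmless elaboration rather than a different argument.
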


\begin{proof}
It follows from Lemma~\ref{log size} that $\left\lvert\log_p \frac{\phi}{\omega(\phi)}\right\rvert_p = \frac{1}{p^e}$.
In particular, $x \log_p \frac{\phi}{\omega(\phi)}$ is in the domain of $\exp_p$ for all $x \in \Z_p$.
If $p \equiv 1, 4 \mod 5$, then
\begin{align*}
	h_i(\Z_p)
	&= \omega(\phi)^i \exp_p\!\left(\Z_p \log_p \tfrac{\phi}{\omega(\phi)}\right) \\
	&= \omega(\phi)^i \exp_p(p^e \Z_p) \\
	&= \omega(\phi)^i \exp_p(p^e \sqrt{5} \Z_p)
\end{align*}
since $\size{\sqrt{5}}_p = 1$.
If $p \equiv 2, 3 \mod 5$, then $\log_p \frac{\phi}{\omega(\phi)} \in p^e \sqrt{5} \Z_p$ by Lemma~\ref{pure square root 5}, since $\frac{\phi}{\omega(\phi)} \cdot \frac{\bar\phi}{\omega(\bar\phi)} = \frac{-1}{-1} = 1$.
Therefore
\begin{align*}
	h_i(\Z_p)
	&= \omega(\phi)^i \exp_p\!\left(\Z_p \log_p \tfrac{\phi}{\omega(\phi)}\right) \\
	&= \omega(\phi)^i \exp_p(p^e \sqrt{5} \Z_p).
	\qedhere
\end{align*}
\end{proof}

We will not describe the set $h_i(\Z_p) = \omega(\phi)^i \exp_p(p^e \sqrt{5} \Z_p)$ more explicitly.
Instead, the next lemma gives conditions under which $g_i(y) = z$ has a solution $y \in h_i(\Z_p)$ when $p \equiv 2, 3 \mod 5$.
We will apply it for Lucas non-zeros $i$ as well as Lucas zeros.

\begin{lemma}\label{y set general}
Let $p \equiv 2, 3 \mod 5$.
If $p \neq 2$, define $e \geq 1$ by $\big\lvert\frac{\phi}{\omega(\phi)} - 1\big\rvert_p = \frac{1}{p^e}$; if $p = 2$, let $e = 2$.
Let $z, w \in \Z_p$ and $y_0 \in \mathcal O_K \setminus \{0\}$ such that $w^2 = 5 z^2 + 4 y_0 \overline{y_0}$ and $\big\lvert \frac{w + \sqrt{5} z}{2} - y_0 \big\rvert_p \leq \frac{1}{p^e}$.
Then $\frac{w + \sqrt{5} z}{2} \in y_0 \exp_p(p^e \sqrt{5} \Z_p)$.
\end{lemma}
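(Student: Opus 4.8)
The plan is to pass to the ratio $u = \frac{w + \sqrt 5 z}{2 y_0}$ and recognize it as a norm-one element that is close to $1$, so that Lemma~\ref{pure square root 5} applies directly. Write $y = \frac{w + \sqrt 5 z}{2}$. Since $z, w \in \Z_p$ are fixed by conjugation while $\sqrt 5 \mapsto -\sqrt 5$, we have $\bar y = \frac{w - \sqrt 5 z}{2}$ and hence $y \bar y = \frac{w^2 - 5 z^2}{4}$. The hypothesis $w^2 = 5 z^2 + 4 y_0 \overline{y_0}$ is therefore equivalent to $y \bar y = y_0 \overline{y_0}$. Dividing by $y_0 \overline{y_0}$ and using $\overline{y/y_0} = \bar y/\overline{y_0}$ gives $u \bar u = 1$; in particular $\size u_p = 1$, so $u$ is a unit, and the desired conclusion $y \in y_0 \exp_p(p^e \sqrt 5 \Z_p)$ is equivalent to $u \in \exp_p(p^e \sqrt 5 \Z_p)$.

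Next I would control the distance from $u$ to $1$. Since $u - 1 = \frac{y - y_0}{y_0}$, we have $\size{u - 1}_p = \size{y - y_0}_p/\size{y_0}_p$, and because $y_0$ is a unit this equals $\size{y - y_0}_p \le \frac{1}{p^e}$ by hypothesis. As $e \ge 1$, this bound satisfies $\frac{1}{p^e} < p^{-1/(p - 1)}$ (for $p \neq 2$ because $\frac 1 p < p^{-1/(p-1)}$ whenever $p > 2$, and for $p = 2$ because the convention $e = 2$ gives $\frac 1 4 < \frac 1 2$), so $u$ lies in the region on which $\log_p$ and $\exp_p$ are mutually inverse isomorphisms, as recorded in Proposition~\ref{isomorphism}.

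Now the two conditions $u \bar u = 1$ and $\size{u - 1}_p < p^{-1/(p - 1)}$ are exactly the hypotheses of Lemma~\ref{pure square root 5} applied to $x = u$, which yields $\log_p u \in p \sqrt 5 \Z_p$; write $\log_p u = b \sqrt 5$ with $b \in \Z_p$. To sharpen $p \sqrt 5 \Z_p$ into $p^e \sqrt 5 \Z_p$, I would invoke Lemma~\ref{log size}, which gives $\size{\log_p u}_p = \size{u - 1}_p \le \frac{1}{p^e}$; since $\size{\sqrt 5}_p = 1$, this forces $\size b_p \le \frac{1}{p^e}$, i.e. $b \in p^e \Z_p$, so $\log_p u \in p^e \sqrt 5 \Z_p$. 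Exponentiating gives $u = \exp_p(\log_p u) \in \exp_p(p^e \sqrt 5 \Z_p)$, and multiplying by $y_0$ completes the argument.

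The step I expect to be the main obstacle is the control of $\size{u - 1}_p$ in the second paragraph: the clean identity $\size{u - 1}_p = \size{y - y_0}_p$, and with it the entire reduction, hinges on $y_0$ being a $p$-adic unit. One should therefore confirm that $\size{y_0}_p = 1$ wherever the lemma is invoked — which is the case, since there $y_0 = \omega(\phi)^i$ is a root of unity. Granting this, the remainder is a formal application of the $\log_p$–$\exp_p$ isomorphism together with the norm-one refinement furnished by Lemma~\ref{pure square root 5}.
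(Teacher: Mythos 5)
Your proof is correct and is essentially the paper's own argument: the paper forms the same norm-one ratio $\frac{w + \sqrt{5} z}{2 y_0}$, applies Lemma~\ref{pure square root 5} to put its logarithm in $p \sqrt{5} \Z_p$, and then uses Lemma~\ref{log size} to sharpen this to $p^e \sqrt{5} \Z_p$ before exponentiating. Your flagged concern about $\size{y_0}_p = 1$ is legitimate but equally present in the paper's proof: both arguments silently use that $y_0$ is a unit (the stated hypothesis $y_0 \in \mathcal O_K \setminus \{0\}$ alone would not suffice), and this indeed holds at every invocation of the lemma, where $y_0$ is $\omega(\phi)^i$, respectively $\omega(\phi)^{i - r} \phi^r$ for $p = 2$.
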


\begin{proof}
We have
\[
	\frac{
		\frac{w + \sqrt{5} z}{2}
	}{
		y_0
	}
	\cdot
	\frac{
		\frac{w - \sqrt{5} z}{2}
	}{
		\overline{y_0}
	}
	= 1.
\]
Since $\big\lvert \frac{w + \sqrt{5} z}{2} - y_0 \big\rvert_p \leq p^{-e} < p^{-1/(p - 1)}$, Lemma~\ref{pure square root 5} implies $\log_p \frac{w + \sqrt{5} z}{2 y_0} \in p \sqrt{5} \Z_p$.
Moreover, Lemma~\ref{log size} implies $\big\lvert \log_p \frac{w + \sqrt{5} z}{2 y_0} \big\rvert_p = \big\lvert \frac{w + \sqrt{5} z}{2} - y_0 \big\rvert_p \leq \frac{1}{p^e}$, so in fact $\log_p \frac{w + \sqrt{5} z}{2 y_0} \in p^e \sqrt{5} \Z_p$, and this implies $\frac{w + \sqrt{5} z}{2} \in y_0 \exp_p(p^e \sqrt{5} \Z_p)$.
\end{proof}

Now we begin to consider $F_i(\Z_p)$.
We will use the following as a partial converse of Lemma~\ref{y set general} to prove Propositions~\ref{subset} and \ref{Lucas zero measure} concerning $F_i(\Z_p)$.

\begin{lemma}\label{y distance}
Let $p$ be a prime such that $p \neq 2$ and $p \neq 5$, and define $e \geq 1$ by $\big\lvert\frac{\phi}{\omega(\phi)} - 1\big\rvert_p = \frac{1}{p^e}$.
Let $i \in \{0, 1, \dots, \pi(p) - 1\}$.
If $y \in h_i(\Z_p)$, then $\lvert y - \omega(\phi)^i \rvert_p \leq \frac{1}{p^e}$.
\end{lemma}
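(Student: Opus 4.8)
The plan is to use the explicit description of $h_i(\Z_p)$ provided by Lemma~\ref{image under h} and then to control the $p$-adic size of $\exp_p(t) - 1$ by passing through the logarithm.

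First I would invoke Lemma~\ref{image under h} to write an arbitrary element $y \in h_i(\Z_p)$ in the form $y = \omega(\phi)^i \exp_p(t)$ for some $t \in p^e \sqrt{5} \Z_p$. Since $\omega(\phi)$ is a root of unity, $\size{\omega(\phi)^i}_p = 1$, and therefore
\[
	\size{y - \omega(\phi)^i}_p = \size{\omega(\phi)^i}_p \cdot \size{\exp_p(t) - 1}_p = \size{\exp_p(t) - 1}_p.
\]
Thus it suffices to bound $\size{\exp_p(t) - 1}_p$.

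Next I would observe that since $p \neq 5$ we have $\size{\sqrt{5}}_p = 1$, so $t \in p^e \sqrt{5} \Z_p$ gives $\size{t}_p \leq p^{-e}$. Because $e \geq 1$ and $p \neq 2$, we have $p^{-e} \leq p^{-1} < p^{-1/(p-1)}$, so $t$ lies in the region where $\exp_p$ and $\log_p$ are mutually inverse, and in particular $\size{\exp_p(t) - 1}_p < p^{-1/(p-1)}$. I would then apply Lemma~\ref{log size} with $x = \exp_p(t) - 1$: since $t = \log_p(1 + x)$, the lemma yields $\size{\exp_p(t) - 1}_p = \size{x}_p = \size{\log_p(1 + x)}_p = \size{t}_p \leq p^{-e}$. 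Combining this with the displayed equality gives $\size{y - \omega(\phi)^i}_p \leq \frac{1}{p^e}$, as desired.

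The argument is short, and the only point requiring care is verifying that $t$ falls strictly inside the ball of radius $p^{-1/(p-1)}$, so that Lemma~\ref{log size} is applicable; this is precisely where the hypotheses $e \geq 1$, $p \neq 2$, and $p \neq 5$ enter. Beyond this bookkeeping of valuations, I expect no genuine obstacle.
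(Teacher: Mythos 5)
Your proof is correct, and it reaches the conclusion by a genuinely different mechanism than the paper for the key estimate. Both arguments begin identically, using Lemma~\ref{image under h} to write $y = \omega(\phi)^i \exp_p(t)$ with $t \in p^e \sqrt{5}\,\Z_p$ and factoring out the root of unity. But where you bound $\lvert \exp_p(t) - 1 \rvert_p$, the paper applies Proposition~\ref{isomorphism} with exponent $e - 1$, which only gives the strict inequality $\lvert y - \omega(\phi)^i \rvert_p < p^{-e + 1 - 1/(p-1)}$; since that is not yet $\leq p^{-e}$, the paper must then use $p \neq 5$ a second time to argue that the extension is unramified, so $\lvert y - \omega(\phi)^i \rvert_p$ lies in $p^{\Z}$ and can be rounded down to $p^{-e}$. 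You instead use the basic fact (cited in Section~\ref{p-adic} from Gouv\^{e}a) that $\exp_p$ and $\log_p$ are mutually inverse between the balls of radius $p^{-1/(p-1)}$, and then apply Lemma~\ref{log size} to $x = \exp_p(t) - 1$ to obtain the exact equality $\lvert \exp_p(t) - 1 \rvert_p = \lvert t \rvert_p \leq p^{-e}$. This buys you three things: you bypass Proposition~\ref{isomorphism} entirely (and hence its supporting Lemma~\ref{powers}), you avoid the value-group/ramification argument, and you get a sharper conclusion, namely $\lvert y - \omega(\phi)^i \rvert_p = \lvert t \rvert_p$ rather than just an upper bound. The hypotheses enter almost identically in both proofs: $p \neq 2$ guarantees $p^{-1} < p^{-1/(p-1)}$, and $p \neq 5$ guarantees $\lvert \sqrt{5} \rvert_p = 1$ (the paper needs $p \neq 5$ additionally for unramifiedness, which your route renders unnecessary).
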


\begin{proof}
Since $F_i(x) = g_i(h_i(x))$, it suffices to show $g_i(y) \in F(i) + p^e \Z_p$.
By Lemma~\ref{image under h}, $y = \omega(\phi)^i \exp_p(p^e \sqrt{5} t)$ for some $t \in \Z_p$.
Since $p \neq 2$, we have $1 > \frac{1}{p - 1}$ and
\[
	\big\lvert p^e \sqrt{5} t \big\rvert_p
	\leq p^{-e}
	< p^{-e + 1 - 1/(p - 1)}.
\]
By Proposition~\ref{isomorphism}, this implies
\[
	\left\lvert y - \omega(\phi)^i \right\rvert_p
	= \left\lvert \exp_p(p^e \sqrt{5} t) - 1 \right\rvert_p
	< p^{-e + 1 - 1/(p - 1)}.
\]
Since $p \neq 5$, we have $\lvert y - \omega(\phi)^i \rvert_p \in p^\Z$ (because the ramification index is $1$); therefore $\lvert y - \omega(\phi)^i \rvert_p \leq p^{-e}$.
\end{proof}

\begin{proposition}\label{subset}
Let $p$ be a prime such that $p \neq 2$ and $p \neq 5$, and define $e \geq 1$ by $\big\lvert\frac{\phi}{\omega(\phi)} - 1\big\rvert_p = \frac{1}{p^e}$.
If $i \in \{0, 1, \dots, \pi(p) - 1\}$, then $F_i(\Z_p) \subseteq F(i) + p^e \Z_p$.
\end{proposition}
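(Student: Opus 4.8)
The plan is to exploit the factorization $F_i(x) = g_i(h_i(x))$ together with Lemma~\ref{y distance}, so that it suffices to prove $g_i(y) \in F(i) + p^e \Z_p$ for every $y \in h_i(\Z_p)$. The first step is to locate a convenient base point inside $h_i(\Z_p)$. Evaluating $h_i$ at the integer $x = i$ and using $\exp_p\!\left(i \log_p \tfrac{\phi}{\omega(\phi)}\right) = \left(\tfrac{\phi}{\omega(\phi)}\right)^i$ (valid since $\tfrac{\phi}{\omega(\phi)}$ satisfies $\size{\tfrac{\phi}{\omega(\phi)} - 1}_p < p^{-1/(p-1)}$), I obtain $h_i(i) = \omega(\phi)^i \left(\tfrac{\phi}{\omega(\phi)}\right)^i = \phi^i$. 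Since $\bar\phi = -\phi^{-1}$, Binet's formula then gives $g_i(\phi^i) = \frac{\phi^i - (-1)^i \phi^{-i}}{\sqrt 5} = \frac{\phi^i - \bar\phi^i}{\sqrt 5} = F(i)$. Thus $F(i) = g_i(h_i(i))$ is itself an element of $F_i(\Z_p)$, and it is enough to compare an arbitrary value $g_i(y)$ to $g_i(\phi^i)$.

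Next I would control the distance between a general $y$ and this base point. Both $y$ and $\phi^i = h_i(i)$ lie in $h_i(\Z_p)$, so Lemma~\ref{y distance} bounds each of $\size{y - \omega(\phi)^i}_p$ and $\size{\phi^i - \omega(\phi)^i}_p$ by $p^{-e}$; the ultrametric inequality then yields $\size{y - \phi^i}_p \le p^{-e}$. In particular $\size{y}_p = \size{\phi^i}_p = 1$, which is what makes the reciprocal term in $g_i$ manageable.

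The remaining step is a Lipschitz-type estimate for $g_i$. Writing
\[
	g_i(y) - g_i(\phi^i) = \frac{1}{\sqrt 5}\left[(y - \phi^i) - (-1)^i\left(y^{-1} - \phi^{-i}\right)\right]
\]
and using $y^{-1} - \phi^{-i} = \frac{\phi^i - y}{y\,\phi^i}$ with $\size{y\,\phi^i}_p = 1$, I get $\size{y^{-1} - \phi^{-i}}_p = \size{y - \phi^i}_p \le p^{-e}$. Since $p \neq 5$ gives $\size{\sqrt 5}_p = 1$, the ultrametric inequality yields $\size{g_i(y) - F(i)}_p \le p^{-e}$.

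Finally, I would convert this norm bound into the desired containment. By Theorem~\ref{interpolation} we have $g_i(y) = F_i(x) \in \Z_p$ for the preimage $x$ with $y = h_i(x)$, and $F(i) \in \Z \subseteq \Z_p$, so their difference lies in $\Z_p$ and has $p$-adic valuation at least $e$; hence $g_i(y) - F(i) \in p^e \Z_p$. Letting $y$ range over $h_i(\Z_p)$ gives $F_i(\Z_p) \subseteq F(i) + p^e \Z_p$. The only genuinely delicate points are the two just flagged: handling the reciprocal term $y^{-1}$, which requires the exact norm $\size{y}_p = 1$ rather than merely $\size{y}_p \le 1$, and ensuring the conclusion lands in $p^e \Z_p$ rather than only in $p^e \mathcal O_K$ — the latter is precisely where the integrality assertion $F_i(\Z_p) \subseteq \Z_p$ of Theorem~\ref{interpolation} is essential.
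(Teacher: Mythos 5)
Your proposal is correct and follows essentially the same route as the paper's proof: reduce to bounding $\lvert g_i(y) - F(i)\rvert_p$ via Lemma~\ref{y distance}, the ultrametric inequality through the point $\omega(\phi)^i$, the identity $y^{-1} - \phi^{-i} = \frac{\phi^i - y}{y\,\phi^i}$ with $\size{y}_p = 1$, and finally the integrality $F_i(\Z_p) \subseteq \Z_p$ from Theorem~\ref{interpolation} to land in $p^e\Z_p$ rather than $p^e\mathcal{O}_K$. The only cosmetic difference is that you obtain $\size{\phi^i - \omega(\phi)^i}_p \leq p^{-e}$ by applying Lemma~\ref{y distance} to the point $\phi^i = h_i(i) \in h_i(\Z_p)$, whereas the paper deduces it directly from $\left\lvert\frac{\phi}{\omega(\phi)} - 1\right\rvert_p = p^{-e}$; both are valid.
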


\begin{proof}
Let $y \in h_i(\Z_p)$.
By Lemma~\ref{y distance}, $\lvert y - \omega(\phi)^i \rvert_p \leq p^{-e}$.
We also have $\lvert\phi^i - \omega(\phi)^i\rvert_p \leq p^{-e}$ since $\big\lvert\frac{\phi}{\omega(\phi)} - 1\big\rvert_p = p^{-e}$.
Therefore
\begin{align*}
	\left\lvert g_i(y) - F(i) \right\rvert_p
	&= \left\lvert \frac{y - (-1)^i y^{-1}}{\sqrt{5}} - \frac{\phi^i - (-1)^i \phi^{-i}}{\sqrt{5}} \right\rvert_p \\
	&= \left\lvert \left(y - \phi^i\right) - (-1)^i \left(y^{-1} - \phi^{-i}\right) \right\rvert_p \\
	&\leq \max\!\left(
		\left\lvert y - \phi^i \right\rvert_p,
		\left\lvert y^{-1} - \phi^{-i} \right\rvert_p
	\right) \\
	&= \left\lvert y - \phi^i \right\rvert_p \\
	&\leq \max\!\left(
		\left\lvert y - \omega(\phi)^i \right\rvert_p,
		\left\lvert \omega(\phi)^i - \phi^i \right\rvert_p
	\right) \\
	&\leq p^{-e}
\end{align*}
after two applications of the ultrametric inequality.
It follows that $g_i(y) \in F(i) + p^e \mathcal O_K$.
By Theorem~\ref{interpolation}, $g_i(y) \in F_i(\Z_p) \subseteq \Z_p$, so $g_i(y) \in F(i) + p^e \Z_p$.
\end{proof}

If $i$ is a Lucas non-zero, the $\subseteq$ in Proposition~\ref{subset} can be strengthened to $=$ as follows; this establishes that $\mu(F_i(\Z_p)) = \frac{1}{p^e}$.

\begin{proposition}\label{Lucas non-zero image}
Let $p$ be a prime such that $p \neq 2$ and $p \neq 5$, and define $e \geq 1$ by $\big\lvert\frac{\phi}{\omega(\phi)} - 1\big\rvert_p = \frac{1}{p^e}$.
If $i$ is a Lucas non-zero, then $F_i(\Z_p) = F(i) + p^e \Z_p$.
\end{proposition}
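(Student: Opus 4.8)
Proposition~\ref{subset} already gives $F_i(\Z_p) \subseteq F(i) + p^e \Z_p$, so the plan is to prove the reverse inclusion $F(i) + p^e \Z_p \subseteq F_i(\Z_p)$. Fix $z \in F(i) + p^e \Z_p$. Since $F_i = g_i \circ h_i$, it is enough to find $y \in h_i(\Z_p)$ with $g_i(y) = z$, that is, a suitable root of $y^2 - \sqrt 5 z y - (-1)^i = 0$. The two roots are $\frac{\sqrt 5 z \pm w}{2}$ with $w^2 = 5 z^2 + 4 (-1)^i$, so I first need a square root $w$ of the discriminant.

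First I would control the discriminant. The identity $L(i)^2 - 5 F(i)^2 = (-1)^i 4$ together with $z \equiv F(i) \mod p^e$ gives $5 z^2 + 4 (-1)^i \equiv L(i)^2 \mod p$. As $i$ is a Lucas non-zero, $L(i)$ is a $p$-adic unit, so this is a nonzero square modulo $p$, and Hensel's lemma produces $w \in \Z_p$ with $w^2 = 5 z^2 + 4(-1)^i$; I choose the sign so that $w \equiv L(i) \mod p$. Comparing the factorizations $w^2 - L(i)^2 = 5(z - F(i))(z + F(i))$ and $w^2 - L(i)^2 = (w - L(i))(w + L(i))$, and using that $w + L(i)$ is a unit, upgrades this to $w \equiv L(i) \mod p^e$.

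Next I would examine the candidate root $y = \frac{w + \sqrt 5 z}{2}$. A short computation shows $g_i(y) = z$ and $y \overline y = \frac{w^2 - 5 z^2}{4} = (-1)^i$. Using $2 \phi^i = L(i) + \sqrt 5 F(i)$ and the congruences $w \equiv L(i)$, $z \equiv F(i) \mod p^e$, I get $\size{y - \phi^i}_p \leq p^{-e}$, and combining with $\size{\phi^i - \omega(\phi)^i}_p \leq p^{-e}$ (as in Proposition~\ref{subset}) yields $\size{y - \omega(\phi)^i}_p \leq p^{-e}$. It then remains to place $y$ in $h_i(\Z_p) = \omega(\phi)^i \exp_p(p^e \sqrt 5 \Z_p)$ from Lemma~\ref{image under h}. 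When $p \equiv 1, 4 \mod 5$ everything lies in $\Q_p$ and $h_i(\Z_p) = \omega(\phi)^i + p^e \Z_p$ is simply a ball (since $\exp_p(p^e \Z_p) = 1 + p^e \Z_p$ by Lemma~\ref{log size} and the $\log_p$--$\exp_p$ isomorphism), so the distance bound already gives $y \in h_i(\Z_p)$. When $p \equiv 2, 3 \mod 5$ I would invoke Lemma~\ref{y set general} with $y_0 = \omega(\phi)^i$: its hypotheses are exactly $w^2 = 5 z^2 + 4 y_0 \overline{y_0}$ and $\size{y - y_0}_p \leq p^{-e}$, where $y_0 \overline{y_0} = \omega(\phi)^i \overline{\omega(\phi)^i} = (-1)^i$ follows from $\phi \bar\phi = -1$ and uniqueness of Teichm\"uller lifts. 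In either case $g_i(y) = z$ with $y \in h_i(\Z_p)$, so $z \in F_i(\Z_p)$.

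The hard part will be the case $p \equiv 2, 3 \mod 5$: there $h_i(\Z_p)$ is not a ball but a one-parameter subset carved out of a two-dimensional ball by the norm condition, so knowing only that $y$ is a root of the quadratic lying within $p^{-e}$ of $\omega(\phi)^i$ does not by itself force $y \in h_i(\Z_p)$. The essential bookkeeping is the norm identity $y \overline y = (-1)^i = \omega(\phi)^i \overline{\omega(\phi)^i}$, which is precisely what allows Lemma~\ref{y set general} to pin $y$ down inside $h_i(\Z_p)$ rather than merely in the ambient ball.
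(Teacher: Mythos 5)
Your proposal is correct and takes essentially the same route as the paper's proof: reduce to the reverse inclusion via Proposition~\ref{subset}, solve $y^2 - \sqrt{5} z y - (-1)^i = 0$ whose discriminant is $\equiv L(i)^2 \bmod p^e$ (a unit square precisely because $i$ is a Lucas non-zero), and place the chosen root in $h_i(\Z_p)$ using the ball structure when $p \equiv 1, 4 \bmod 5$ and Lemma~\ref{y set general} with the norm identity $y_0 \overline{y_0} = (-1)^i$ when $p \equiv 2, 3 \bmod 5$. The only divergence is bookkeeping in the root-selection step: you normalize $w \equiv L(i) \bmod p^e$ and compare with $2\phi^i = L(i) + \sqrt{5} F(i)$, whereas the paper checks that $y_0 = \omega(\phi)^i$ approximately satisfies the quadratic and picks the root lying in $\omega(\phi)^i + p^e \mathcal{O}_K$; your version makes that choice slightly more explicit but is not a different argument.
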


\begin{proof}
As a result of Proposition~\ref{subset}, it suffices to show
\[
	g_i\!\left(\omega(\phi)^i \exp_p(p^e \sqrt{5} \Z_p)\right) \supseteq F(i) + p^e \Z_p.
\]
Let $z \in F(i) + p^e \Z_p$.
We first show that there exists $y \in \omega(\phi)^i + p^e \mathcal O_K$ such that $g_i(y) = z$, and then we show $y \in \omega(\phi)^i \exp_p(p^e \sqrt{5} \Z_p)$.

The equation $g_i(y) = z$ is equivalent to $y^2 - \sqrt{5} z y - (-1)^i = 0$.
Let $y_0 = \omega(\phi)^i$.
Binet's formula~\eqref{Binet} shows that $\frac{y_0 - (-1)^i y_0^{-1}}{\sqrt{5}} \equiv \frac{\phi^i - (-1)^i \phi^{-i}}{\sqrt{5}} = F(i) \equiv z \mod p^e$, so $y_0^2 - \sqrt{5} z y_0 - (-1)^i \equiv 0 \mod p^e$.
The discriminant of $y^2 - \sqrt{5} z y - (-1)^i$ is $5 z^2 + (-1)^i 4 \equiv 5 F(i)^2 + (-1)^i 4 = L(i)^2 \mod p^e$.
Therefore $(5 z^2 + (-1)^i 4) \bmod p$ is a quadratic residue.
Since $i$ is a Lucas non-zero, we have $5 z^2 + (-1)^i 4 \nequiv 0 \mod p$, so $\Z_p$ contains two distinct square roots of $5 z^2 + (-1)^i 4$.
The two solutions to $y^2 - \sqrt{5} z y - (-1)^i = 0$ are $y = \frac{\pm w + \sqrt{5} z}{2}$, where $w \in \Z_p$ is a square root of $5 z^2 + (-1)^i 4$.
Choose $w$ such that $\frac{w + \sqrt{5} z}{2} \in \omega(\phi)^i + p^e \mathcal O_K$.
It remains to show that $\frac{w + \sqrt{5} z}{2} \in \omega(\phi)^i \exp_p(p^e \sqrt{5} \Z_p)$.

If $p \equiv 1, 4 \mod 5$, then $\sqrt{5} \in \Z_p$ and Proposition~\ref{isomorphism} implies that $\exp_p$ is an isomorphism from $p^e \Z_p$ to $1 + p^e \Z_p$, so
\[
	\omega(\phi)^i \exp_p(p^e \sqrt{5} \Z_p)
	= \omega(\phi)^i(1 + p^e \Z_p)
	= \omega(\phi)^i + p^e \Z_p.
\]
Therefore $\frac{w + \sqrt{5} z}{2} \in \omega(\phi)^i \exp_p(p^e \sqrt{5} \Z_p)$.

Let $p \equiv 2, 3 \mod 5$.
We have $y_0 \overline{y_0} = \omega(\phi)^i \omega(\bar\phi)^i = (-1)^i$, so $w^2 = 5 z^2 + 4 y_0 \overline{y_0}$.
By Lemma~\ref{y set general}, $\frac{w + \sqrt{5} z}{2} \in \omega(\phi)^i \exp_p(p^e \sqrt{5} \Z_p)$.
\end{proof}

The last major step in the proof of Theorem~\ref{main theorem} is to establish the measure of $F_i(\Z_p)$ for Lucas zeros $i$.
We do this in Proposition~\ref{Lucas zero measure}, using the following two lemmas.

\begin{lemma}\label{part 1}
Let $p$ be a prime such that $p \neq 2$ and $p \neq 5$.
Let $i$ be a Lucas zero, and define $\zeta = \omega(\phi)^i$.
Then $\zeta \in \sqrt{5} \Z_p$.
\end{lemma}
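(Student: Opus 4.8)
The statement is only substantive when $p \equiv 2, 3 \mod 5$, for if $p \equiv 1, 4 \mod 5$ then $\sqrt{5} \in \Z_p$ is a unit (recall $p \neq 5$), so $\sqrt{5}\,\Z_p = \Z_p$ already contains the root of unity $\zeta$. Hence I would assume $p \equiv 2, 3 \mod 5$, in which case $\mathcal O_K = \Z_p[\sqrt{5}]$ and conjugation $x \mapsto \bar x$ is the nontrivial automorphism of $K$. Writing $\zeta = a + b\sqrt{5}$ with $a, b \in \Z_p$, membership $\zeta \in \sqrt{5}\,\Z_p$ is equivalent to $a = 0$, that is, to the vanishing of $\zeta + \bar\zeta = 2a$. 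Thus the entire proof reduces to the single exact identity $\bar\zeta = -\zeta$.

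The plan is to assemble this from three exact relations among roots of unity, each obtained by promoting a congruence modulo $p$ to an equality using the fact that reduction is injective on the group of $(p^f - 1)$th roots of unity. First, $\overline{\omega(\phi)} = \omega(\bar\phi)$: conjugation fixes $\Z_p$ and negates $\sqrt{5}$, hence preserves $p \mathcal O_K$ and induces on $\mathbb{F}_{p^2}$ the map negating the image of $\sqrt{5}$; applying it to the congruence $\omega(\phi) \equiv \phi = \frac{1 + \sqrt{5}}{2} \mod p$ gives $\overline{\omega(\phi)} \equiv \frac{1 - \sqrt{5}}{2} = \bar\phi \mod p$, so $\overline{\omega(\phi)} = \omega(\bar\phi)$ by uniqueness and therefore $\bar\zeta = \omega(\bar\phi)^i$. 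Second, $\omega(\phi)\,\omega(\bar\phi) = -1$, since the product is a root of unity congruent to $\phi\bar\phi = -1$. Third, the Lucas-zero hypothesis gives $(-\phi^2)^i \equiv -1 \mod p$, as in the proof of Proposition~\ref{Lucas zeros}; since $(-\omega(\phi)^2)^i$ is a root of unity congruent to $(-\phi^2)^i \equiv -1$, it must equal $-1$, that is, $\omega(\phi)^{2i} = (-1)^{i+1}$.

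Combining the three relations finishes the argument: $\bar\zeta = \omega(\bar\phi)^i = (-1)^i \omega(\phi)^{-i} = (-1)^i (-1)^{i+1} \omega(\phi)^i = -\zeta$, where the second equality uses $\omega(\bar\phi) = -\omega(\phi)^{-1}$ and the third uses $\omega(\phi)^{-2i} = (-1)^{i+1}$. Hence $\zeta + \bar\zeta = 0$ and $\zeta \in \sqrt{5}\,\Z_p$. The main obstacle is conceptual rather than computational: Binet's formula and the Lucas-zero condition only supply the congruence $\zeta + \bar\zeta \equiv L(i) \equiv 0 \mod p$, which is far too weak to conclude. The crux is the observation that $\zeta$, $\omega(\bar\phi)$, and $(-\omega(\phi)^2)^i$ are all $(p^f - 1)$th roots of unity, so that injectivity of reduction modulo $p$ upgrades each of the three congruences to an exact equality and forces $\bar\zeta = -\zeta$ on the nose.
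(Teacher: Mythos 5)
Your proof is correct, and in the substantive case $p \equiv 2, 3 \bmod 5$ it takes a genuinely different route from the paper's. The paper also starts by lifting the Lucas-zero congruence to the exact identity $\zeta^2 = \omega(\phi^{2i}) = -(-1)^i$ (equivalent to your relation $\omega(\phi)^{2i} = (-1)^{i+1}$) and also writes $\zeta = a + b\sqrt{5}$, but from there it argues by elimination: since $\zeta^2 \in \Z_p$, either $a = 0$ or $b = 0$, and the unwanted alternative $\zeta \in \Z_p$ is excluded by quoting Vinson's theorem that $\frac{\pi(p)}{\alpha(p)} = 2$ for $p \equiv 3, 7 \bmod 20$ and $\frac{\pi(p)}{\alpha(p)} = 4$ for $p \equiv 13, 17 \bmod 20$ --- which forces $i$ to be even, hence $\zeta^2 = -1$ --- and then invoking the supplement to quadratic reciprocity: for $p \equiv 3 \bmod 4$ there is no square root of $-1$ in $\Z_p$, while for $p \equiv 1 \bmod 4$ there are no Lucas zeros at all, so that case is vacuous. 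You instead prove the exact identity $\bar\zeta = -\zeta$ head-on, from three Teichm\"uller lifts of congruences: $\overline{\omega(\phi)} = \omega(\bar\phi)$ (conjugation commutes with $\omega$), $\omega(\phi)\,\omega(\bar\phi) = -1$, and the lifted Lucas-zero relation; each lift is justified by the injectivity of reduction on $(p^f - 1)$th roots of unity, which the paper has already established, and the needed fact that $-1$ is such a root of unity holds because $p^2 - 1$ is even for $p \neq 2$. Your route is more self-contained and uniform: it needs neither Vinson's $p \bmod 20$ classification nor quadratic reciprocity, and it avoids the case split on $p \bmod 4$; moreover your auxiliary identities are ones the paper uses implicitly elsewhere (e.g., $\omega(\phi)^i \omega(\bar\phi)^i = (-1)^i$ in the proof of Proposition~\ref{Lucas non-zero image}, and the analogous product formula proved for $p = 2$ in Section~\ref{p = 2}). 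What the paper's argument buys in exchange is extra arithmetic information along the way --- that a Lucas zero $i$ is necessarily even and that $\zeta$ is a primitive fourth root of unity --- which ties the lemma back to the trichotomy of Theorem~\ref{ratio}, though none of that is needed for the statement itself.
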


\begin{proof}
Since $i$ is a Lucas zero and $L(i) = \frac{\phi^{2 i} + (-1)^i}{\phi^i}$, we have $\zeta^2 = \omega(\phi^{2 i}) = - (-1)^i$.
(In particular, $\zeta^4 = 1$.)

If $p \equiv 1, 4 \mod 5$, then $\sqrt{5} \in \Z_p$, so it follows that $\zeta \in \sqrt{5} \Z_p$ as desired.
If $p \equiv 2, 3 \mod 5$, write $\zeta = a + b \sqrt{5}$ where $a, b \in \Z_p$.
Since $a^2 + 2 a b \sqrt{5} + 5 b^2 = \zeta^2 \in \Z_p$, this implies $a = 0$ or $b = 0$, so $\zeta \in \Z_p$ or $\zeta \in \sqrt{5} \Z_p$.
Vinson~\cite[Theorem~4]{Vinson} showed that $\frac{\pi(p)}{\alpha(p)} = 2$ if $p \equiv 3, 7 \mod 20$ and $\frac{\pi(p)}{\alpha(p)} = 4$ if $p \equiv 13, 17 \mod 20$.
These two cases include all primes $p \equiv 2, 3 \mod 5$ such that $p \neq 2$, so Theorem~\ref{ratio} and Proposition~\ref{Lucas zeros} imply that $i$ is even.
Therefore $\zeta^2 = - (-1)^i = -1$.
If $p \equiv 3 \mod 4$, then one of the supplements to quadratic reciprocity implies $\zeta \notin \Z_p$, so $\zeta \in \sqrt{5} \Z_p$.
If $p \equiv 1 \mod 4$, then $p \equiv 13, 17 \mod 20$, so $\frac{\pi(p)}{\alpha(p)} = 4$ and there are no Lucas zeros, so the statement is vacuously true.
\end{proof}

Since $\zeta \in \sqrt{5} \Z_p$ by Lemma~\ref{part 1}, for a given $j \in \Z_p$ the residue $\zeta \sqrt{5} j \bmod p$ is either a quadratic residue or a quadratic non-residue.
This allows us, for a given $z$, to characterize the Lucas zeros $i$ for which $y^2 - \sqrt{5} z y - (-1)^i = 0$ has a solution $y \in \Z_p$.

\begin{lemma}\label{Lucas zero image}
Let $p$ be a prime such that $p \neq 2$ and $p \neq 5$.
Let $i$ be a Lucas zero, and define $\zeta = \omega(\phi)^i$.
Let $z \in \Z_p$.
Define $\lambda \in \Z$ by $\big\lvert z - \zeta \frac{2}{\sqrt{5}} \big\rvert_p = \frac{1}{p^\lambda}$, and define $j \in \{1, 2, \dots, p - 1\}$ by $z \equiv \zeta \frac{2}{\sqrt{5}} + j p^\lambda \mod p^{\lambda + 1}$.
\begin{enumerate}
\item\label{part 2}
The equation $y^2 - \sqrt{5} z y - (-1)^i = 0$ has a solution $y \in \Z_p$ if and only if $\lambda$ is even and $\zeta \sqrt{5} j \bmod p$ is a quadratic residue.
\item\label{part 3}
If $y^2 - \sqrt{5} z y - (-1)^i = 0$ has a solution $y \in \Z_p$, then $\lvert y - \zeta \rvert_p = \frac{1}{p^{\lambda/2}}$.
\end{enumerate}
\end{lemma}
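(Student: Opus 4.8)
The plan is to analyze the quadratic $y^2 - \sqrt5 z y - (-1)^i = 0$ through its discriminant $D = 5z^2 + 4(-1)^i$, whose roots are $\frac{\sqrt5 z \pm \sqrt D}{2}$. The crucial structural input is that $i$ is a Lucas zero: as in the proof of Lemma~\ref{part 1} we have $\zeta^2 = \omega(\phi^{2i}) = -(-1)^i$, and $\zeta \in \sqrt5\,\Z_p$, so that $\zeta\frac{2}{\sqrt5} \in \Z_p$ and $\zeta\sqrt5 \in \Z_p$. This last point is what makes the quadratic-residue condition in part~\eqref{part 2} meaningful, since it guarantees that $\zeta\sqrt5 j \bmod p$ is a genuine element of the prime field even when $p \equiv 2, 3 \bmod 5$.

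First I would rewrite the discriminant in terms of $\delta \colonequal z - \zeta\frac{2}{\sqrt5}$, which satisfies $\size{\delta}_p = p^{-\lambda}$. Substituting $(-1)^i = -\zeta^2$ and $z = \zeta\frac{2}{\sqrt5} + \delta$ and expanding, the $4\zeta^2$ terms cancel and one obtains the factorization
\[
	D = 5z^2 - 4\zeta^2 = \delta\bigl(4\sqrt5\,\zeta + 5\delta\bigr).
\]
Since $p \neq 2, 5$ and $\zeta$ is a unit, $4\sqrt5\,\zeta$ is a $p$-adic unit, so for $\lambda \geq 1$ the ultrametric inequality gives $\nu_p(D) = \nu_p(\delta) = \lambda$ and $\frac{D}{p^\lambda} \equiv 4\zeta\sqrt5\, j \bmod p$, using that $\frac{\delta}{p^\lambda} \equiv j$ and $5\delta \equiv 0 \bmod p$.

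For part~\eqref{part 2}, the equation has the relevant root exactly when $D$ is a square in $\Z_p$. For $p \equiv 1, 4 \bmod 5$ this is literally the condition that $y = \frac{\sqrt5 z \pm \sqrt D}{2} \in \Z_p$ (both numerator terms lie in $\Z_p$ and $2$ is a unit); for $p \equiv 2, 3 \bmod 5$ it is the condition that $w = \sqrt D \in \Z_p$ exists, which, exactly as in the proof of Proposition~\ref{Lucas non-zero image}, produces the roots $\frac{\pm w + \sqrt5 z}{2} \in \mathcal O_K$ satisfying the norm condition. Since $p$ is odd, $D$ is a square in $\Z_p$ if and only if $\nu_p(D) = \lambda$ is even and the unit $\frac{D}{p^\lambda} \bmod p$ is a quadratic residue; by the previous paragraph the latter equals $4\zeta\sqrt5\, j \bmod p$, and since $4$ is a square this is a quadratic residue if and only if $\zeta\sqrt5\, j \bmod p$ is. This yields the stated criterion.

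For part~\eqref{part 3}, assume a solution exists and write $w = \sqrt D \in \Z_p$, so $\size{w}_p = p^{-\lambda/2}$. Using $\sqrt5 z - 2\zeta = \sqrt5\,\delta$, the chosen root satisfies $y - \zeta = \frac{\pm w + \sqrt5\,\delta}{2}$. Since $\size{\sqrt5\,\delta}_p = p^{-\lambda}$ while $\size{w}_p = p^{-\lambda/2}$ with $\lambda/2 < \lambda$ (as $\lambda \geq 1$), the term $w$ strictly dominates, and the ultrametric inequality gives $\size{y - \zeta}_p = \size{w}_p = p^{-\lambda/2}$. The main obstacle I anticipate is the uniform bookkeeping across the two cases $p \equiv 1, 4$ and $p \equiv 2, 3 \bmod 5$: identifying the ring in which the root actually lives and verifying that ``$D$ is a square in $\Z_p$'' is the right common condition, together with the valuation estimate that $4\sqrt5\,\zeta$ is a unit so that $\nu_p(D) = \lambda$ and the unit part collapses to $4\zeta\sqrt5\, j$. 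This computation is valid precisely in the regime $\lambda \geq 1$, i.e.\ for $z \equiv \zeta\frac{2}{\sqrt5} \bmod p$, which is the branching regime relevant to the measure computation in Proposition~\ref{Lucas zero measure}.
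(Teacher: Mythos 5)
Your proposal is correct on the range it covers and takes a genuinely different route from the paper. The paper never solves the quadratic explicitly: it proves the forward direction of Assertion~\eqref{part 2}, together with Assertion~\eqref{part 3}, by checking the hypothesis of Hensel's lemma at the approximate root $y_0 = \zeta + p^{\lambda/2}a$ with $a^2 \equiv \zeta\sqrt{5}j \bmod p$, and it proves the converse by a separate expansion of $f_z(\zeta + p^{\lambda/2}a)$ ($\lambda$ even) and $f_z(\zeta + p^{(\lambda+1)/2}a)$ ($\lambda$ odd) modulo $p^{\lambda+1}$. You instead use the quadratic formula: the factorization $D = \delta(4\sqrt{5}\zeta + 5\delta)$ pins down $\nu_p(D) = \lambda$ and the unit part $4\zeta\sqrt{5}j$, Assertion~\eqref{part 2} reduces to the standard description of squares in $\Q_p$ (even valuation and quadratic-residue unit part), and Assertion~\eqref{part 3} drops out of $y - \zeta = \frac{\pm w + \sqrt{5}\delta}{2}$ by one ultrametric estimate. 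This gives both directions of the equivalence at once and makes the quadratic-residue condition transparent---it is literally ``the discriminant is a square''---at no real loss of rigor, since the squares-in-$\Q_p$ criterion is Hensel's lemma in packaged form; the paper's version keeps everything as congruence bookkeeping but needs two separate computations.

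Two boundary issues deserve comment, and both afflict the paper's own proof at least as much as yours. First, your restriction to $\lambda \geq 1$ is not a loss: at $\lambda = 0$ the unit part of $D$ is $j(4\sqrt{5}\zeta + 5j)$ rather than $4\zeta\sqrt{5}j$, and the stated equivalence can genuinely fail (for $p \equiv 2,3 \bmod 5$, taking $z \equiv -\zeta\frac{2}{\sqrt{5}} + pu \bmod p^2$ with $u$ a unit gives $\lambda = 0$ and $\zeta\sqrt{5}j \equiv -4\zeta^2 = 4$, a quadratic residue, while $\nu_p(D) = 1$ is odd, so there is no root). The paper's expansion silently needs $\lambda \geq 1$ as well, since its discarded term $\sqrt{5}jp^{3\lambda/2}a$ is not $O(p^{\lambda+1})$ when $\lambda = 0$; and only $\lambda \geq e \geq 1$ is ever used, because Proposition~\ref{subset} already confines $F_i(\Z_p)$ to $F(i) + p^e\Z_p$. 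Second, for $p \equiv 2,3 \bmod 5$ and $\lambda \geq 1$ the equation has no roots in $\Z_p$ at all: a root $y \in \Z_p$ would give $\sqrt{5}zy = y^2 + \zeta^2 \in \Q_p$, forcing $zy = 0$, which is impossible. So the literal phrase ``solution $y \in \Z_p$'' must be reinterpreted, and your reading---a root with $w = \sqrt{D} \in \Z_p$, equivalently a root satisfying $y\bar{y} = (-1)^i$---is the right one, being exactly what membership in $h_i(\Z_p)$ forces and what Proposition~\ref{Lucas zero measure} uses via Lemma~\ref{y set general}. To close that case fully, add the one-line converse: if a root satisfies $y\bar{y} = (-1)^i$, then $w^2 = D \in \Q_p$ gives $\bar{w} = \pm w$, and $\bar{w} = -w$ would force $\bar{y} = -y$, hence $y = \pm\zeta$ and $\delta \in \{0, -\frac{4\zeta}{\sqrt{5}}\}$, impossible for $1 \leq \lambda < \infty$. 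Your accounting here is actually sharper than the paper's, whose Hensel argument produces a root in $\mathcal O_K$ (not $\Z_p$), and whose converse assumes both $y \in \Z_p$ and $y = \zeta + p^{\lambda/2}a$ with $a \in \Z_p$---incompatible when $\zeta \notin \Z_p$---and so does not by itself exclude the roots in $\mathcal O_K$ that exist for every even $\lambda$ (every unit of $\Z_p$ is a square in the unramified quadratic extension) but violate the norm condition.
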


To prove Lemma~\ref{Lucas zero image}, we use the following version of Hensel's lemma in $\mathcal O_K$.

\begin{Hensel}
Let $f(x) \in \mathcal O_K[x]$.
If there exists $y_0 \in \mathcal O_K$ such that $\lvert f(y_0) \rvert_p < \lvert f'(y_0) \rvert_p^2$, then there is a unique $y \in \mathcal O_K$ satisfying $\lvert y - y_0 \rvert_p < \lvert f'(y_0) \rvert_p$ such that $f(y) = 0$.
\end{Hensel}

\begin{proof}[Proof of Lemma~\ref{Lucas zero image}]
Let $f_z(y) \colonequal y^2 - \sqrt{5} z y - (-1)^i$.
As in the proof of Lemma~\ref{part 1}, $\zeta^2 = \omega(\phi^{2 i}) = - (-1)^i$.
Therefore we can write $f_z(y) = y^2 - \sqrt{5} z y + \zeta^2$.

To prove Assertion~\eqref{part 2}, first assume $\lambda$ is even and $\zeta \sqrt{5} j \bmod p$ is a quadratic residue.
Let $a \in \{1, \dots, p - 1\}$ such that $a^2 \equiv \zeta \sqrt{5} j \mod p$.
We now check that $y_0 \colonequal \zeta + p^{\lambda/2} a$ satisfies the condition of Hensel's lemma.
Using $\sqrt{5} z \equiv 2 \zeta + \sqrt{5} j p^\lambda \mod p^{\lambda + 1}$, we have
\begin{align*}
	f_z(y_0)
	&= y_0^2 - \sqrt{5} z y_0 + \zeta^2 \\
	&\equiv \left(\zeta + p^{\lambda/2} a\right)^2 - \left(2 \zeta + \sqrt{5} j p^\lambda\right) \left(\zeta + p^{\lambda/2} a\right) + \zeta^2 \mod p^{\lambda + 1} \\
	&\equiv \left(a^2 - \zeta \sqrt{5} j\right) p^{\lambda} \mod p^{\lambda + 1} \\
	&\equiv 0 \mod p^{\lambda + 1}.
\end{align*}
Therefore $\lvert f_z(y_0) \rvert_p \leq \frac{1}{p^{\lambda + 1}}$.
It remains to bound $\lvert f_z'(y_0) \rvert_p^2$.
Since $z \equiv \zeta \frac{2}{\sqrt{5}} \mod p^{\lambda/2}$, we have $f_z'(y_0) \equiv 2 y_0 - 2 \zeta \equiv 0 \mod p^{\lambda/2}$, but $f_z'(y_0) \nequiv 0 \mod p^{\lambda/2 + 1}$ since $a \nequiv 0 \mod p$.
Therefore
\[
	\lvert f_z(y_0) \rvert_p
	\leq p^{-\lambda - 1}
	< p^{-\lambda}
	= \lvert f_z'(y_0) \rvert_p^2,
\]
so Hensel's lemma implies that $f_z(y)$ has a unique root $y \in \Z_p$ satisfying $\lvert y - (\zeta + p^{\lambda/2} a) \rvert_p < \frac{1}{p^{\lambda/2}}$.
It follows that $\lvert y - \zeta \rvert_p = \frac{1}{p^{\lambda/2}}$.
This proves one direction of Assertion~\eqref{part 2} and also Assertion~\eqref{part 3}.

Conversely, assume $y^2 - \sqrt{5} z y + \zeta^2 = 0$ and $y \in \Z_p$.
Since $z \equiv \zeta \frac{2}{\sqrt{5}} \mod p^\lambda$, we have
\begin{align*}
	0
	&= y^2 - \sqrt{5} z y + \zeta^2 \\
	&\equiv y^2 - 2 \zeta y + \zeta^2 \mod p^\lambda \\
	&= (y - \zeta)^2.
\end{align*}
There are two cases.

If $\lambda$ is even, this implies $y \equiv \zeta \mod p^{\lambda/2}$.
Write $y = \zeta + p^{\lambda/2} a$ for some $a \in \Z_p$.
Expanding $f_z(y)$ as above shows that $0 = f_z(y) \equiv \left(a^2 - \zeta \sqrt{5} j\right) p^{\lambda} \mod p^{\lambda + 1}$.
Therefore $\zeta \sqrt{5} j \bmod p$ is a quadratic residue.

If $\lambda$ is odd, then $0 \equiv (y - \zeta)^2 \mod p^\lambda$ implies $y \equiv \zeta \mod p^{(\lambda + 1)/2}$.
Write $y = \zeta + p^{(\lambda + 1)/2} a$ for some $a \in \Z_p$.
Then
\begin{align*}
	0
	&= y^2 - \sqrt{5} z y + \zeta^2 \\
	&\equiv \left(\zeta + p^{(\lambda + 1)/2} a\right)^2 - \left(2 \zeta + \sqrt{5} j p^\lambda\right) \left(\zeta + p^{(\lambda + 1)/2} a\right) + \zeta^2 \mod p^{\lambda + 1} \\
	&\equiv -\zeta \sqrt{5} j p^{\lambda} \mod p^{\lambda + 1},
\end{align*}
which contradicts $j \in \{1, 2, \dots, p - 1\}$.
Therefore there is no solution $y \in \Z_p$ when $\lambda$ is odd.
\end{proof}

\begin{proposition}\label{Lucas zero measure}
Let $p$ be a prime such that $p \neq 2$ and $p \neq 5$, and define $e \geq 1$ by $\big\lvert\frac{\phi}{\omega(\phi)} - 1\big\rvert_p = \frac{1}{p^e}$.
If $i$ is a Lucas zero, then $\mu(F_i(\Z_p)) = \frac{1}{2 p^{2 e - 1} (p + 1)}$.
\end{proposition}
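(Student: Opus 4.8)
The plan is to compute $\mu(F_i(\Z_p)) = \mu(g_i(h_i(\Z_p)))$ by stratifying the target $\Z_p$ according to the $p$-adic distance from $z$ to the double-root value $\zeta\tfrac{2}{\sqrt5}$, where $\zeta = \omega(\phi)^i$. For $z\in\Z_p$, set $\lambda = \nu_p(z - \zeta\tfrac2{\sqrt5})$ and let $j\in\{1,\dots,p-1\}$ be the leading base-$p$ digit of $z - \zeta\tfrac2{\sqrt5}$, exactly as in Lemma~\ref{Lucas zero image}; note that $\zeta\tfrac2{\sqrt5}\in\Z_p$ and $\zeta\sqrt5$ is a unit of $\Z_p$, since $\zeta\in\sqrt5\Z_p$ by Lemma~\ref{part 1}. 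The heart of the proof is the criterion
\[
	z\in F_i(\Z_p) \iff \lambda \text{ is even},\ \lambda\geq 2e,\ \text{and } \zeta\sqrt5\,j \text{ is a quadratic residue modulo } p.
\]
Granting it, the $z$ with prescribed $(\lambda,j)$ form a single coset of $p^{\lambda+1}\Z_p$, of measure $p^{-(\lambda+1)}$, and for each even $\lambda$ there are exactly $\tfrac{p-1}{2}$ admissible digits $j$ (as $j$ runs over the nonzero residues so does $\zeta\sqrt5\,j$, and half are residues). Summing over even $\lambda\geq 2e$ then yields
\[
	\mu(F_i(\Z_p)) = \sum_{k\geq 0}\frac{p-1}{2}\cdot\frac{1}{p^{2e+2k+1}} = \frac{p-1}{2}\cdot\frac{p^{-(2e+1)}}{1-p^{-2}} = \frac{1}{2p^{2e-1}(p+1)},
\]
as claimed.

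To prove the criterion, recall that $z\in F_i(\Z_p)$ iff some root $y$ of $g_i(y)=z$, equivalently of $y^2-\sqrt5 z y-(-1)^i=0$, lies in $h_i(\Z_p)=\zeta\exp_p(p^e\sqrt5\Z_p)$ by Lemma~\ref{image under h}. By Lemma~\ref{y distance}, any root in $h_i(\Z_p)$ satisfies $\lvert y-\zeta\rvert_p\leq p^{-e}$; combined with the fact (extracted below) that the roots sit at distance exactly $p^{-\lambda/2}$ from $\zeta$, this forces the constraint $\lambda\geq 2e$. For $p\equiv1,4\bmod5$ everything is transparent: $\sqrt5\in\Z_p$, the roots lie in $\Z_p$, Proposition~\ref{isomorphism} gives $h_i(\Z_p)=\zeta+p^e\Z_p$, and Lemma~\ref{Lucas zero image} supplies both the quadratic-residue condition on $\zeta\sqrt5\,j$ and the distance $\lvert y-\zeta\rvert_p=p^{-\lambda/2}$, completing the criterion in this case.

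The main obstacle is $p\equiv2,3\bmod5$, where $\sqrt5\notin\Z_p$ and the relevant roots lie in $\mathcal O_K\setminus\Z_p$, so Lemma~\ref{Lucas zero image} does not apply directly. Here I would instead analyze the discriminant: with $c=\zeta\tfrac2{\sqrt5}$ one checks $5c^2+4(-1)^i=0$, so $5z^2+4(-1)^i=5(z-c)(z+c)$ has valuation $\lambda$ (for $\lambda\geq1$, since $2c$ is a unit) and unit part $\equiv 4\,\zeta\sqrt5\,j\bmod p$. Thus it is a square $w^2$ with $w\in\Z_p$ precisely when $\lambda$ is even and $\zeta\sqrt5\,j$ is a residue; in that case, using $\zeta\overline{\zeta}=(-1)^i$, Lemma~\ref{y set general} places the root $\tfrac{w+\sqrt5 z}{2}$ in $h_i(\Z_p)$ as soon as $\lambda\geq 2e$, giving $z\in F_i(\Z_p)$. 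The delicate point is the converse: when $\zeta\sqrt5\,j$ is a non-residue I must show $z\notin F_i(\Z_p)$. Then the fact that $5$ is a non-residue modulo $p$ forces $w\in\sqrt5\Z_p$, so both roots lie in $\sqrt5\Z_p$; a direct computation writing $\zeta=\sqrt5 u$ shows that every element of $h_i(\Z_p)$ has nonzero $\Z_p$-part except $\zeta$ itself, so neither root, each differing from $\zeta$ for finite $\lambda$, can lie in $h_i(\Z_p)$. Reconciling this quadratic-residue bookkeeping between $\Z_p$ and $\mathcal O_K$ is the step that requires the most care.
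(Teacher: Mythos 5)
Your proposal is correct in substance and follows the same skeleton as the paper's proof: stratify $z$ by $\lambda = \nu_p\bigl(z - \zeta\tfrac{2}{\sqrt{5}}\bigr)$ and the leading digit $j$, prove the membership criterion ($\lambda$ even, $\lambda \geq 2e$, and $\zeta\sqrt{5}\,j$ a quadratic residue), and sum the geometric series; for $p \equiv 1, 4 \bmod 5$ the two arguments agree essentially line by line. The genuine divergence is the non-membership direction for $p \equiv 2, 3 \bmod 5$, and here your diagnosis is sharper than the paper's own text: the paper handles this case by citing Lemma~\ref{Lucas zero image} verbatim, but that lemma is stated for roots in $\Z_p$, whereas $h_i(\Z_p)$ contains no elements of $\Z_p$ at all when $\sqrt{5} \notin \Q_p$, and its Hensel-based converse cannot simply be re-read over $\mathcal O_K$, because every element of $\mathbb{F}_p^\times$ is a square in the residue field $\mathbb{F}_{p^2}$ --- so when $\zeta\sqrt{5}\,j$ is a non-residue the quadratic still has two roots in $\mathcal O_K$. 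Your replacement argument is exactly the needed repair: since $5$ is a non-residue modulo $p$, those roots lie in $\sqrt{5}\Z_p$, while writing $\zeta = \sqrt{5}u$ and expanding $\zeta \exp_p(p^e \sqrt{5} t) = 5uB + \sqrt{5}uA$ (with $A$ a unit and $B = 0$ only for $t = 0$) shows $h_i(\Z_p) \cap \sqrt{5}\Z_p = \{\zeta\}$; this added care on the subtle case is what your route buys over the published one. One case is still missing from the ``only if'' direction of your criterion when $p \equiv 2, 3 \bmod 5$: odd $\lambda$. It is a one-line fix with the bookkeeping you already set up: the discriminant $5z^2 + 4(-1)^i = 5(z - c)(z + c)$, where $c = \zeta\tfrac{2}{\sqrt{5}}$, then has odd valuation, and since $K/\Q_p$ is unramified an element of odd valuation cannot be a square in $K$, so the quadratic has no roots in $K$ whatsoever and $z \notin F_i(\Z_p)$. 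With that sentence added, your proof is complete.
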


\begin{proof}
Let $i$ be a Lucas zero.
We would like to determine the measure of the set
\[
	F_i(\Z_p) = \{z \in \Z_p : \text{$g_i(y) = z$ for some $y \in h_i(\Z_p)$}\}.
\]
Let $z \in \Z_p$.
As in the proof of Proposition~\ref{Lucas non-zero image}, $g_i(y) = z$ if and only if $y^2 - \sqrt{5} z y - (-1)^i = 0$.
Let $\zeta = \omega(\phi)^i$.
Define $\lambda \in \Z$ by $\big\lvert z - \zeta \frac{2}{\sqrt{5}} \big\rvert_p = \frac{1}{p^\lambda}$, and define $j \in \{1, 2, \dots, p - 1\}$ by $z \equiv \zeta \frac{2}{\sqrt{5}} + j p^\lambda \mod p^{\lambda + 1}$.
By Lemma~\ref{Lucas zero image}, if $\lambda$ is odd then $g_i(y) = z$ has no solution $y \in \Z_p$.
Furthermore, if $\lambda$ is even then $g_i(y) = z$ has a unique solution $y \in \Z_p$ if $\zeta \sqrt{5} j \bmod p$ is a quadratic residue and no solution otherwise.
In the case that there is a unique solution, we will show that $y \in h_i(\Z_p)$ if and only if $\lambda \geq 2 e$.
Since there are $\frac{p - 1}{2}$ nonzero quadratic residues modulo~$p$ and the set $\zeta \frac{2}{\sqrt{5}} + j p^\lambda + p^{\lambda + 1} \Z_p$ has measure $\frac{1}{p^{\lambda + 1}}$, it will then follow that
\[
	\mu(F_i(\Z_p))
	= \sum_{\substack{\lambda \geq 2 e \\ \text{$\lambda$ even}}} \frac{p - 1}{2} \cdot \frac{1}{p^{\lambda + 1}}
	= \frac{1}{2 p^{2 e - 1} (p + 1)}.
\]

To that end, assume that $\lambda$ is even and $\zeta \sqrt{5} j \bmod p$ is a quadratic residue.
Let $y \in \Z_p$ be the unique solution of $g_i(y) = z$, which is guaranteed by Lemma~\ref{Lucas zero image}.
In particular, $\lvert y - \omega(\phi)^i \rvert_p = \frac{1}{p^{\lambda/2}}$.

If $\lambda < 2 e$, then $\lvert y - \omega(\phi)^i \rvert_p = \frac{1}{p^{\lambda/2}} > p^{-e}$.
By Lemma~\ref{y distance}, $y \notin h_i(\Z_p)$.

For the other direction, assume $\lambda \geq 2 e$.
Then $\size{y - \omega(\phi)^i}_p = \frac{1}{p^{\lambda/2}} \leq p^{-e}$.
We show that $y \in h_i(\Z_p)$.
There are two cases.

If $p \equiv 1, 4 \mod 5$, then $h_i(\Z_p) = \omega(\phi)^i + p^e \Z_p$ (again by Lemma~\ref{image under h} and the proof of Proposition~\ref{Lucas non-zero image}), so $y \in h_i(\Z_p)$.

Let $p \equiv 2, 3 \mod 5$.
Let $y_0 = \omega(\phi)^i$.
Since $i$ is a Lucas zero, we have $\omega(\phi)^i + (-1)^i \omega(\phi)^{-i} \equiv 0 \mod p$.
In fact, we can strengthen this congruence to $\omega(\phi)^i + (-1)^i \omega(\phi)^{-i} \equiv 0 \mod p^e$ by Theorem~\ref{Wall exponent Lucas characterization} (along with the fact that $e = 1$ for $p = 3$).
Therefore $\frac{y_0 - (-1)^i y_0^{-1}}{\sqrt{5}} = \frac{\omega(\phi)^i - (-1)^i \omega(\phi)^{-i}}{\sqrt{5}} \equiv \zeta \frac{2}{\sqrt{5}} \equiv z \mod p^e$, so $y_0^2 - \sqrt{5} z y_0 - (-1)^i \equiv 0 \mod p^e$.
The discriminant of $y^2 - \sqrt{5} z y - (-1)^i$ is
\begin{align*}
	5 z^2 + (-1)^i 4
	&\equiv 5 \left(\zeta \tfrac{2}{\sqrt{5}} + j p^\lambda\right)^2 + (-1)^i 4 \mod p^{\lambda + 1} \\
	&\equiv 4 \zeta \sqrt{5} j p^\lambda \mod p^{\lambda + 1},
\end{align*}
so $\frac{1}{p^\lambda} (5 z^2 + (-1)^i 4) \equiv 2^2 \cdot \zeta \sqrt{5} j \mod p$ is a quadratic residue.
Let $w \in p^{\lambda/2} \Z_p$ be the square root of $5 z^2 + (-1)^i 4$ satisfying $\frac{w + \sqrt{5} z}{2} \in \omega(\phi)^i + p^e \mathcal O_K$.
The two solutions to $y^2 - \sqrt{5} z y - (-1)^i = 0$ are $y = \frac{\pm w + \sqrt{5} z}{2}$.
By Lemma~\ref{y set general}, $\frac{w + \sqrt{5} z}{2} \in \omega(\phi)^i \exp_p(p^e \sqrt{5} \Z_p)$.
Therefore $\frac{w + \sqrt{5} z}{2} \in h_i(\Z_p)$.
\end{proof}

Putting together the previous results now allows us to prove the main result of the article.

\begin{proof}[Proof of Theorem~\ref{main theorem}]
As mentioned in Section~\ref{Introduction}, for $p = 5$ it follows from Burr's characterization~\cite{Burr} that $\dens(5) = 1$.
We compute $N(5) = 5$ and $Z(5) = 0$, so $\dens(5) = 1 = \frac{N(5)}{5} + \frac{Z(5)}{10 \cdot (5 + 1)}$ as desired.

Let $p$ be a prime such that $p \neq 2$ and $p \neq 5$.
Let $e = \nu_p(F(p - \epsilon))$.
By Theorem~\ref{Wall exponent characterization}, $\big\lvert\frac{\phi}{\omega(\phi)} - 1\big\rvert_p = \frac{1}{p^e}$.

By Theorem~\ref{interpolation} and the discussion following it,
\[
	\dens(p)
	= \mu\!\left(\bigcup_{i = 0}^{p^f - 2} F_i(\Z_p)\right)
	= \mu\!\left(\bigcup_{i = 0}^{\pi(p) - 1} F_i(\Z_p)\right).
\]
By Proposition~\ref{Lucas non-zero image}, if $i$ is a Lucas non-zero, then $F_i(\Z_p) = F(i) + p^e \Z_p$.
The number of distinct images is $N(p)$, so together the Lucas non-zeros contribute measure $\frac{N(p)}{p^e}$.

Let $i$ be a Lucas zero.
By Proposition~\ref{subset}, $F_i(\Z_p) \subseteq F(i) + p^e \Z_p$.
If $F(i) \equiv F(j) \mod p^e$ for some Lucas non-zero $j$, then $F_i(\Z_p) \subseteq F(j) + p^e \Z_p = F_j(\Z_p)$, so $\mu(F_i(\Z_p) \cup F_j(\Z_p)) = \mu(F_j(\Z_p))$ and $F_i(\Z_p)$ makes no additional contribution to $\dens(p)$.
Otherwise, $F_i(\Z_p)$ is disjoint from $F_j(\Z_p)$ for all Lucas non-zeros $j$, so $\mu(F_i(\Z_p) \cup F_j(\Z_p)) = \mu(F_i(\Z_p)) + \mu(F_j(\Z_p))$.
There are $Z(p)$ Lucas zeros $i$ such that $F(i) \nequiv F(j) \mod p^e$ for all Lucas non-zeros $j$.
If $Z(p) \geq 2$ then Proposition~\ref{Lucas zeros} implies that $Z(p) = 2$ and that the two images $F_i(\Z_p)$ are disjoint.
Therefore the Lucas zeros contribute measure $\frac{Z(p)}{2 p^{2 e - 1} (p + 1)}$ by Proposition~\ref{Lucas zero measure}.
\end{proof}

\section{The prime $p = 2$}\label{p = 2}

In this section we prove Theorem~\ref{theorem p=2}, which states that $\dens(2) = \frac{21}{32}$.
For the prime $p = 2$, the period length is $\pi(2) = 3 = \alpha(2)$.
There is one Lucas zero, namely $i = 0$.
It is convenient to write elements of $\Q_2(\sqrt{5})$ as elements of $\Q_2(\phi)$ instead since the denominator of $\frac{1 + \sqrt{5}}{2}$ obscures the fact that $\phi$ is a $2$-adic algebraic integer.
Recall that if $\lvert x \rvert_2 = 1$ then $\omega(x)$ is the $3$rd root of unity satisfying $\omega(x) \equiv x \mod 2$.
The roots of unity congruent modulo $2$ to $\phi$ and $\bar\phi$ are
\begin{align*}
	\omega(\phi) &=
		\left(0 + \frac{1}{2^{-1}} + \frac{0}{2^{-2}} + \frac{0}{2^{-3}} + \cdots\right)
		+ \left(1 + \frac{1}{2^{-1}} + \frac{0}{2^{-2}} + \frac{1}{2^{-3}} + \cdots\right) \phi \\
	\omega(\bar\phi) &=
		\left(1 + \frac{0}{2^{-1}} + \frac{1}{2^{-2}} + \frac{1}{2^{-3}} + \cdots\right)
		+ \left(1 + \frac{0}{2^{-1}} + \frac{1}{2^{-2}} + \frac{0}{2^{-3}} + \cdots\right) \phi.
\end{align*}
Their product is $\omega(\phi) \omega(\bar\phi) = 1$ (since $\rho = \omega(\phi) \omega(\bar\phi)$ is a root of unity satisfying $\rho^3 = 1$ and $\rho \equiv \phi \bar\phi = -1 \mod 2$).

The outline of the proof of Theorem~\ref{theorem p=2} is the same as for other primes, but several details are different.
First we need a version of Theorem~\ref{interpolation}.
We compute $\lvert \frac{\phi}{\omega(\phi)} - 1 \rvert_2 = \frac{1}{2}$ and $\lvert \log_2 \frac{\phi}{\omega(\phi)} \rvert_2 = \frac{1}{2} = p^{-1/(p - 1)}$.
The power series for $\exp_2$ does not converge when evaluated at $\log_2 \frac{\phi}{\omega(\phi)}$; in particular, we cannot replace $\frac{\phi}{\omega(\phi)}$ with $\exp_2 \log_2 \frac{\phi}{\omega(\phi)}$.
Consequently the interpolation is comprised of $6$ functions rather than $3$.

\begin{theorem}\label{interpolation p=2}
For each $i \in \{0, 1, 2\}$ and each $r \in \{0, 1\}$, define the function $F_{i, r} \colon r + 2 \Z_2 \to \Q_2(\sqrt{5})$ by
\[
	F_{i, r}(r + 2 x)
	= \frac{\omega(\phi)^{i - r} \phi^r \exp_2\!\left(x \log_2((\tfrac{\phi}{\omega(\phi)})^2)\right) - \omega(\bar\phi)^{i - r} \bar\phi^r \exp_2\!\left(-x \log_2((\tfrac{\phi}{\omega(\phi)})^2)\right)}{\sqrt{5}}.
\]
Then $F_{i, r}(r + 2 \Z_2) \subseteq \Z_2$, and $F(n) = F_{(n \bmod 3), (n \bmod 2)}(n)$ for all $n \geq 0$.
\end{theorem}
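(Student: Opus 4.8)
The plan is to mimic the proof of Theorem~\ref{interpolation}, isolating the one feature that makes $p = 2$ special: the argument $\log_2\frac{\phi}{\omega(\phi)}$ sits exactly on the boundary of the domain of convergence of $\exp_2$ (we have $\size{\log_2\frac{\phi}{\omega(\phi)}}_2 = \frac12 = 2^{-1/(2-1)}$), so we cannot interpolate $\phi^n$ directly and must instead work with the square $(\frac{\phi}{\omega(\phi)})^2$, whose logarithm lies strictly inside the disk of convergence. First I would record convergence. Since the $2$-adic logarithm is a homomorphism wherever it converges and $(\frac{\phi}{\omega(\phi)})^2$ lies in its domain, $\log_2((\frac{\phi}{\omega(\phi)})^2) = 2\log_2\frac{\phi}{\omega(\phi)}$, so $\size{\log_2((\frac{\phi}{\omega(\phi)})^2)}_2 = \frac12\cdot\frac12 = \frac14 < \frac12 = 2^{-1/(2-1)}$. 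Hence for every $x \in \Z_2$ the arguments $\pm x\log_2((\frac{\phi}{\omega(\phi)})^2)$ have absolute value at most $\frac14$, both instances of $\exp_2$ converge, and $F_{i,r}$ is a well-defined continuous function on $r + 2\Z_2$.

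Next I would check that $F_{i,r}$ agrees with $F$ at the integers. Fix an integer $n$, set $r = n \bmod 2$ and $i = n \bmod 3$, and write $n = r + 2x$ with $x = (n - r)/2 \in \Z$. For integer $x$ we have $\exp_2(x\log_2((\frac{\phi}{\omega(\phi)})^2)) = (\frac{\phi}{\omega(\phi)})^{2x}$, so the first numerator term is $\omega(\phi)^{i - r}\phi^r(\frac{\phi}{\omega(\phi)})^{2x} = \omega(\phi)^{i - n}\phi^n = \phi^n$, using $\omega(\phi)^3 = 1$ and $i \equiv n \mod 3$. For the second term I would combine $\phi\bar\phi = -1$ with $\omega(\phi)\omega(\bar\phi) = 1$ to obtain $\frac{\bar\phi}{\omega(\bar\phi)} = -(\frac{\phi}{\omega(\phi)})^{-1}$, hence $(\frac{\bar\phi}{\omega(\bar\phi)})^2 = (\frac{\phi}{\omega(\phi)})^{-2}$; this rewrites $\exp_2(-x\log_2((\frac{\phi}{\omega(\phi)})^2)) = (\frac{\bar\phi}{\omega(\bar\phi)})^{2x}$ and makes the second term $\omega(\bar\phi)^{i - n}\bar\phi^n = \bar\phi^n$. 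The numerator thus collapses to $\phi^n - \bar\phi^n$, and Binet's formula~\eqref{Binet} gives $F_{i,r}(n) = F(n)$, which is exactly $F_{(n \bmod 3),(n \bmod 2)}(n)$.

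Finally I would establish the containment $F_{i,r}(r + 2\Z_2) \subseteq \Z_2$ by the continuity-and-density argument used after Theorem~\ref{interpolation}. Given $x \in \Z_2$, I would pick integers $x_m \to x$ in $\Z_2$ subject to the single congruence $x_m \equiv 2(i - r) \mod 3$, which is compatible with $2$-adic approximation by the Chinese remainder theorem; then $r + 2x_m \equiv i \mod 3$ and $r + 2x_m \equiv r \mod 2$, so the previous paragraph gives $F_{i,r}(r + 2x_m) = F(r + 2x_m) \in \Z$. Since $F_{i,r}$ is continuous and $\Z_2$ is closed in $\Q_2(\sqrt5)$, taking the limit yields $F_{i,r}(r + 2x) \in \Z_2$.

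I expect the main obstacle to be the bookkeeping in the second step: one must verify that the single exponential $\exp_2(-x\log_2((\frac{\phi}{\omega(\phi)})^2))$ legitimately reproduces $\bar\phi^n$, which hinges on the boundary phenomenon that $\frac{\phi}{\omega(\phi)}$ itself is excluded from the $\exp_2$ domain while its square lies strictly inside, together with the root-of-unity identities $\omega(\phi)^3 = \omega(\bar\phi)^3 = 1$ and $\omega(\phi)\omega(\bar\phi) = 1$. Once these are in hand, the convergence estimate and the closure argument are routine.
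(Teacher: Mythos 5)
Your proposal is correct and takes essentially the same approach as the paper: both proofs pass to $(\tfrac{\phi}{\omega(\phi)})^2$ to land strictly inside the domain of $\exp_2$, use the identities $\phi\bar\phi = -1$ and $\omega(\phi)\omega(\bar\phi) = 1$ to turn the $\bar\phi$-term into $\exp_2$ of the negated argument, and prove $F_{i,r}(r + 2\Z_2) \subseteq \Z_2$ by approximating $x \in \Z_2$ with integers satisfying the congruence modulo $3$ and invoking continuity. The only cosmetic difference is order of operations: the paper builds the interpolation from $\phi^{r+2m} = \omega(\phi)^{2m}\phi^r \exp_2\!\left(m \log_2((\tfrac{\phi}{\omega(\phi)})^2)\right)$ and then converts $\log_2((\tfrac{\bar\phi}{\omega(\bar\phi)})^2)$ into $-\log_2((\tfrac{\phi}{\omega(\phi)})^2)$, whereas you verify the displayed formula directly at the integers.
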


\begin{proof}
Let $p = 2$.
We rewrite $\phi^n$ and $\bar\phi^n$ in Binet's formula~\eqref{Binet} as functions that are defined on $\Z_2$.
We have $\lvert (\frac{\phi}{\omega(\phi)})^2 - 1 \rvert_2 = \frac{1}{4} < p^{-1/(p - 1)}$ by direct computation (or by \cite[Lemma~6]{Rowland--Yassawi}).
Therefore $(\frac{\phi}{\omega(\phi)})^2 = \exp_2 \log_2((\frac{\phi}{\omega(\phi)})^2)$.
For $m \geq 0$ and $r \in \{0, 1\}$, write
\begin{align*}
	\phi^{r + 2 m}
	&= \omega(\phi)^{2 m} \phi^r (\tfrac{\phi}{\omega(\phi)})^{2 m}\\
	&= \omega(\phi)^{2 m} \phi^r \exp_2 \log_2((\tfrac{\phi}{\omega(\phi)})^{2 m}) \\
	&= \omega(\phi)^{2 m} \phi^r \exp_2\!\left(m \log_2((\tfrac{\phi}{\omega(\phi)})^2)\right),
\end{align*}
and similarly for $\bar\phi$.
For all $x \in \Z_2$, define
\[
	F_{i, r}(r + 2 x)
	\colonequal \frac{\omega(\phi)^{i - r} \phi^r \exp_2\!\left(x \log_2((\tfrac{\phi}{\omega(\phi)})^2)\right) - \omega(\bar\phi)^{i - r} \bar\phi^r \exp_2\!\left(x \log_2((\tfrac{\bar\phi}{\omega(\bar\phi)})^2)\right)}{\sqrt{5}}.
\]
Then $F_{i, r}$ is an analytic function on $r + 2 \Z_2$ that agrees with $F$ on $A_{i, r} \colonequal \{n \geq 0 : \text{$n \equiv i \mod 3$ and $n \equiv r \mod 2$}\}$.
Finally, since $\phi \bar\phi = -1$,
\begin{align*}
	\log_2((\tfrac{\phi}{\omega(\phi)})^2) + \log_2((\tfrac{\bar\phi}{\omega(\bar\phi)})^2)
	&= \log_2((\tfrac{\phi}{\omega(\phi)})^2 \cdot (\tfrac{\bar\phi}{\omega(\bar\phi)})^2) \\
	&= \log_2 1 \\
	&= 0,
\end{align*}
so $\log_2((\tfrac{\bar\phi}{\omega(\bar\phi)})^2) = - \log_2((\tfrac{\phi}{\omega(\phi)})^2)$.
This gives the expression for $F_{i, r}(r + 2 x)$ stated in the theorem.
By construction, $F(n) = F_{(n \bmod 3), (n \bmod 2)}(n)$ for all $n \geq 0$.

To see that $F_{i, r}(r + 2 x) \in \Z_2$ if $x \in \Z_2$, take a sequence of integers $(x_m)_{m \geq 0}$ that converges to $x$ such that $r + 2 x_m \equiv i \mod 3$ for each $m \geq 0$; since $F_{i, r}(r + 2 x_m) = F(r + 2 x_m) \in \Z$, it follows by continuity that $F_{i, r}(r + 2 x) \in \Z_p$.
\end{proof}

As in the proof of Theorem~\ref{interpolation p=2}, let
\[
	A_{i, r} = \{n \geq 0 : \text{$n \equiv i \mod 3$ and $n \equiv r \mod 2$}\}.
\]
By the Chinese remainder theorem, $A_{i, r}$ is dense in $r + 2 \Z_2$.
Theorem~\ref{interpolation p=2} implies
\[
	\dens(2)
	= \mu\!\left(\bigcup_{i = 0}^2 \big(F_{i, 0}(2 \Z_2) \cup F_{i, 1}(1 + 2 \Z_2)\big)\right).
\]

An upper bound on $\dens(2)$ can be obtained easily.
Namely, the set of residues modulo~$32$ attained by the Fibonacci sequence is
\[
	\{0, 1, 2, 3, 5, 7, 8, 9, 11, 13, 15, 16, 17, 19, 21, 23, 24, 25, 27, 29, 31\}.
\]
This set has size $21$, so $\dens(2) \leq \frac{21}{32}$.
Proposition~\ref{image p=2} below implies that $\frac{21}{32}$ is also a lower bound on $\dens(2)$.

Write $F_{i, r}(r + 2 x) = g_r(h_{i, r}(x))$ where $g_r(y) = \frac{y - (-1)^r y^{-1}}{\sqrt{5}}$ and
\[
	h_{i, r}(x)
	= \omega(\phi)^{i - r} \phi^r \exp_2\!\left(x \log_2((\tfrac{\phi}{\omega(\phi)})^2)\right).
\]
We next determine $h_{i, r}(\Z_2)$; this is analogous to Lemma~\ref{image under h}.

\begin{lemma}\label{image under h p=2}
For all $i \in \{0, 1, 2\}$ and $r \in \{0, 1\}$, we have $h_{i, r}(\Z_2) = \omega(\phi)^{i - r} \phi^r \exp_2(4 \sqrt{5} \Z_2)$.
\end{lemma}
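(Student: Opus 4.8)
The plan is to reduce the statement to a single computation about the $2$-adic number $\xi \colonequal \log_2\!\big((\tfrac{\phi}{\omega(\phi)})^2\big)$. Since $h_{i,r}(x) = \omega(\phi)^{i-r}\phi^r \exp_2(x\,\xi)$, as $x$ ranges over $\Z_2$ the argument $x\,\xi$ ranges over exactly $\Z_2\,\xi$, so $h_{i,r}(\Z_2) = \omega(\phi)^{i-r}\phi^r \exp_2(\Z_2\,\xi)$. It therefore suffices to prove the set equality $\Z_2\,\xi = 4\sqrt 5\,\Z_2$, after which the common prefactor $\omega(\phi)^{i-r}\phi^r$ finishes the argument with no appeal to injectivity of $\exp_2$. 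First I would record that every $x\,\xi$ lies in the domain of $\exp_2$: as computed in the proof of Theorem~\ref{interpolation p=2}, $\lvert(\tfrac{\phi}{\omega(\phi)})^2 - 1\rvert_2 = \tfrac14 < \tfrac12$, so by Lemma~\ref{log size} we have $\lvert\xi\rvert_2 = \tfrac14 < \tfrac12$, and hence $\lvert x\,\xi\rvert_2 \leq \tfrac14$ for all $x \in \Z_2$.

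Next I would pin down the shape of $\xi$ via the conjugation argument of Lemma~\ref{pure square root 5}, which applies since $2 \equiv 2 \bmod 5$. Taking $x = (\tfrac{\phi}{\omega(\phi)})^2 \in \mathcal O_K$, one has $\lvert x\rvert_2 = 1$ and $x\bar x = 1$: indeed $\phi\bar\phi = -1$ and $\omega(\phi)\omega(\bar\phi) = 1$ (recorded at the start of this section), and $\overline{\omega(\phi)} = \omega(\bar\phi)$ because conjugation carries the cube root of unity congruent to $\phi$ modulo~$2$ to the one congruent to $\bar\phi$, so that $\bar x = (\tfrac{\bar\phi}{\omega(\bar\phi)})^2$ and $x\bar x = (\phi\bar\phi)^2/(\omega(\phi)\omega(\bar\phi))^2 = 1$. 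Lemma~\ref{pure square root 5} then gives $\xi \in 2\sqrt 5\,\Z_2$; in particular $\xi = b\sqrt 5$ for some $b \in \Z_2$.

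The remaining step upgrades this to the exact valuation. Since $\lvert\sqrt 5\rvert_2 = 1$, the equality $\lvert\xi\rvert_2 = \tfrac14$ from the first paragraph forces $\lvert b\rvert_2 = \tfrac14$, i.e.\ $b = 4u$ with $u \in \Z_2^\times$. Hence $\xi = 4u\sqrt 5$ and $\Z_2\,\xi = 4\sqrt 5\,(u\Z_2) = 4\sqrt 5\,\Z_2$, which is exactly the set equality the reduction requires. Substituting back, $h_{i,r}(\Z_2) = \omega(\phi)^{i-r}\phi^r \exp_2(4\sqrt 5\,\Z_2)$.

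I expect the only delicate point to be the valuation bookkeeping for $p = 2$: Lemma~\ref{pure square root 5} only yields the factor $2 = p$, whereas the statement needs the factor $4$, and this improvement rests entirely on the computation $\lvert(\tfrac{\phi}{\omega(\phi)})^2 - 1\rvert_2 = \tfrac14$ together with Lemma~\ref{log size}. One should also be mindful that for $p = 2$ the ring of integers is $\Z_2[\phi]$ rather than $\Z_2[\sqrt 5]$; accordingly the trace-zero coefficient $b$ is first obtained in $\Q_2$ and only shown to lie in $\Z_2$ a posteriori through $\lvert b\rvert_2 = \tfrac14 < 1$, so no difficulty actually arises.
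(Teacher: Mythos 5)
Your proposal is correct and takes essentially the same route as the paper's own proof: both rest on the computation $\lvert(\tfrac{\phi}{\omega(\phi)})^2 - 1\rvert_2 = \tfrac14$ together with Lemma~\ref{log size}, and on applying Lemma~\ref{pure square root 5} to $x = (\tfrac{\phi}{\omega(\phi)})^2$ (using $\phi\bar\phi = -1$ and $\omega(\phi)\omega(\bar\phi) = 1$) to conclude that $\xi = \log_2\bigl((\tfrac{\phi}{\omega(\phi)})^2\bigr)$ is a pure $\sqrt{5}$ multiple lying in $4\sqrt{5}\,\Z_2$. Your explicit extraction of the unit factor ($\xi = 4u\sqrt{5}$ with $u \in \Z_2^\times$), which is what actually justifies the set equality $\Z_2\,\xi = 4\sqrt{5}\,\Z_2$ rather than mere containment, and your verification that $\overline{\omega(\phi)} = \omega(\bar\phi)$, are details the paper leaves implicit, but they are clarifications of the same argument rather than a different one.
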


\begin{proof}
It follows from $\lvert\log_2((\frac{\phi}{\omega(\phi)})^2)\rvert_2 = \frac{1}{4}$ that $x \log_2((\frac{\phi}{\omega(\phi)})^2)$ is in the domain of $\exp_2$ for all $x \in \Z_2$.
Since $(\frac{\phi}{\omega(\phi)})^2 \cdot (\frac{\bar\phi}{\omega(\bar\phi)})^2 = (-1)^2 = 1$, Lemma~\ref{pure square root 5} implies that $\log_2((\frac{\phi}{\omega(\phi)})^2) \in 4 \sqrt{5} \Z_2$.
Therefore
\begin{align*}
	h_{i, r}(\Z_2)
	&= \omega(\phi)^{i - r} \phi^r \exp_2\!\left(\Z_2 \log_2((\tfrac{\phi}{\omega(\phi)})^2)\right) \\
	&= \omega(\phi)^{i - r} \phi^r \exp_2(4 \sqrt{5} \Z_2). \qedhere
\end{align*}
\end{proof}

The following proposition is analogous to Propositions~\ref{Lucas non-zero image} and \ref{Lucas zero measure}.
For the Lucas zero $i = 0$ there is no partial branching, but the images have smaller measure than the images for the Lucas non-zeros.

\begin{proposition}\label{image p=2}
Let $i \in \{0, 1, 2\}$ and $r \in \{0, 1\}$.
The image of $r + 2 \Z_2$ under $F_{i, r}$ satisfies
\begin{align*}
	F_{1, 0}(0 + 2 \Z_2) &\supseteq 3 + 4 \Z_2 \\
	F_{i, r}(r + 2 \Z_2) &\supseteq 1 + 4 \Z_2 \text{ for all $(i, r) \in \{(1, 1), (2, 0), (2, 1)\}$} \\
	F_{0, 0}(0 + 2 \Z_2) &\supseteq 8 \Z_2 \\
	F_{0, 1}(1 + 2 \Z_2) &\supseteq 2 + 32 \Z_2.
\end{align*}
\end{proposition}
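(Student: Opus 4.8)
The plan is to treat all six inclusions uniformly and reduce each to Lemma~\ref{y set general} (applied with $p = 2$, so $e = 2$ and $p^e = 4$). Fix a pair $(i,r)$ and a target residue $z$ in the claimed coset. To show $z \in F_{i,r}(r + 2\Z_2)$ we must solve $g_r(y) = z$, i.e.\ the quadratic $y^2 - \sqrt5 z y - (-1)^r = 0$, for some $y \in h_{i,r}(\Z_2)$. Its two roots are $\frac{\pm w + \sqrt5 z}{2}$, where $w$ satisfies $w^2 = \Delta \colonequal 5 z^2 + 4(-1)^r$. Setting $y_0 = \omega(\phi)^{i - r} \phi^r$ and using $\omega(\phi) \omega(\bar\phi) = 1$ together with $\phi \bar\phi = -1$, one gets $y_0 \overline{y_0} = (-1)^r$, so $\Delta = 5 z^2 + 4 y_0 \overline{y_0}$ is exactly the quantity in Lemma~\ref{y set general}. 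By Lemma~\ref{image under h p=2}, $h_{i,r}(\Z_2) = y_0 \exp_2(4 \sqrt5 \Z_2)$. Hence it suffices to find $w \in \Z_2$ with $w^2 = \Delta$ and $\left\lvert \frac{w + \sqrt5 z}{2} - y_0 \right\rvert_2 \le \frac14$; Lemma~\ref{y set general} then places the corresponding root in $h_{i,r}(\Z_2)$, giving $z = g_r(y) \in F_{i,r}(r + 2\Z_2)$.

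Thus for each case two things must be checked: (i) $\Delta$ is a square in $\Z_2$, and (ii) a sign of $w$ can be chosen so that $\frac{w + \sqrt5 z}{2} \equiv y_0 \bmod 4$. For (i) I would use that $2^k u$ with $u$ odd is a square in $\Z_2$ exactly when $k$ is even and $u \equiv 1 \bmod 8$. The four cosets with odd $z$ give $\Delta = 5 z^2 \pm 4 \equiv 1 \bmod 8$, a unit square; for $z \in 8\Z_2$ one finds $\Delta = 4 u$, and for $z \in 2 + 32 \Z_2$ one finds $\Delta = 16 u$, in each case with $u \equiv 1 \bmod 8$. The precise cosets in the statement are exactly those for which $\Delta$ is a $2$-adic square, which is why the images are cut down as they are.

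For the four Lucas non-zeros $(i,r) \in \{(1,0),(1,1),(2,0),(2,1)\}$, part (ii) follows from a simple-root (Newton) refinement of Hensel's lemma. Writing $z_0 = g_r(y_0) = \frac{y_0 - \overline{y_0}}{\sqrt5}$, a direct expansion gives $f_z(y_0) = \sqrt5 (z_0 - z) y_0$, so $\left\lvert f_z(y_0) \right\rvert_2 = \left\lvert z - z_0 \right\rvert_2$, while $f_z'(y_0) = 2 y_0 - \sqrt5 z \equiv y_0 + \overline{y_0} \equiv L(i) \bmod 2$ is a unit because $i$ is a Lucas non-zero. Hensel's lemma then produces a root $y$ with $\left\lvert y - y_0 \right\rvert_2 = \left\lvert z - z_0 \right\rvert_2$. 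It remains to verify, from the $2$-adic digits of $\omega(\phi)$ and $\omega(\bar\phi)$ recorded above, that $z_0 \equiv 3 \bmod 4$ in the case $(1,0)$ and $z_0 \equiv 1 \bmod 4$ in the cases $(1,1), (2,0), (2,1)$ (for $(1,1)$ one has $y_0 = \phi$ and $z_0 = F(1) = 1$ exactly); since $z$ lies in the same coset modulo $4$, this gives $\left\lvert y - y_0 \right\rvert_2 \le \frac14$, completing (ii).

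The Lucas zero $i = 0$ is the main obstacle, since there $f_z'(y_0) \equiv y_0 + \overline{y_0} \equiv L(0) \equiv 0 \bmod 2$ is not a unit and the simple-root argument breaks down; the quadratic has a double root modulo $2$, exactly the degenerate behavior handled by Lemma~\ref{Lucas zero image} for odd $p$. Here I would instead argue directly by valuation. For $(0,0)$, $y_0 = 1$ and $z = 8 t$, so $\Delta = 4(1 + 80 t^2)$ with $1 + 80 t^2 \equiv 1 \bmod 16$; taking $w$ to be twice the square root of $1 + 80 t^2$ that is $\equiv 1 \bmod 8$ gives $\frac{w + \sqrt5 z}{2} = \sqrt{1 + 80 t^2} + 4 \sqrt5 t \equiv 1 = y_0 \bmod 4$. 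For $(0,1)$, $y_0 = \omega(\phi)^{-1}\phi$ and $z \in 2 + 32 \Z_2$ give $\Delta = 16 u$ with $u \equiv 1 \bmod 8$, and the analogous choice of $w$ puts the root within $4 \mathcal O_K$ of $y_0$, which must be confirmed against the digits of $\omega(\phi)$. In both Lucas-zero cases, Lemma~\ref{y set general} then finishes as before. The only real work is the explicit control of residues modulo $4$ (and of the valuation of $\Delta$ in the degenerate cases), all of which is routine once the cosets are pinned down.
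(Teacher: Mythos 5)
Your proposal is correct and follows essentially the same route as the paper's proof: both reduce each of the six cases to Lemma~\ref{y set general} (with $p = 2$, $e = 2$) via Lemma~\ref{image under h p=2}, verify that the discriminant $5z^2 + (-1)^r 4$ is a $2$-adic square of the appropriate valuation in each coset, and use the $2$-adic digits of $\omega(\phi)$ to place a root of $y^2 - \sqrt{5} z y - (-1)^r = 0$ within $4\mathcal{O}_K$ of $y_0 = \omega(\phi)^{i-r}\phi^r$. The only difference is local: where the paper simply chooses the sign of $w$ so that $\frac{w + \sqrt{5} z}{2} \equiv y_0 \bmod 4$ (after the same check $g_r(y_0) \equiv z \bmod 4$), you produce that root by a simple-root Hensel/Newton argument in the four Lucas non-zero cases, which is a legitimate and slightly more explicit justification of the same step.
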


\begin{proof}
There are six cases.
Let $z \in 3 + 4 \Z_2$ if $(i, r) = (1, 0)$ and $z \in 1 + 4 \Z_2$ if $(i, r) \in \{(1, 1), (2, 0), (2, 1)\}$.
In these four cases, $z^2 \equiv 1 \mod 8$, so $5 z^2 + (-1)^r 4 \equiv 1 \mod 8$;
therefore $\Z_2$ contains square roots of $5 z^2 + (-1)^r 4$.
Similarly, if $(i, r) = (0, 0)$ and $z \in 8 \Z_2$, then $2 \Z_2$ contains square roots of $5 z^2 + (-1)^r 4 = 5 z^2 + 4 \equiv 4 \mod 64$.
If $(i, r) = (0, 1)$ and $z \in 2 + 32 \Z_2$, then $4 \Z_2$ contains square roots of $5 z^2 + (-1)^r 4 = 5 z^2 - 4 \equiv 16 \mod 128$.

As in the proof of Proposition~\ref{Lucas non-zero image}, $g_r(y) = z$ is equivalent to $y^2 - \sqrt{5} z y - (-1)^r = 0$, the solutions of which are $y = \frac{\pm w + \sqrt{5} z}{2}$, where $w \in \Z_2$ is a square root of $5 z^2 + (-1)^r 4$.
Let $y_0 = \omega(\phi)^{i - r} \phi^r$; then $\overline{y_0} = \omega(\bar\phi)^{i - r} \bar\phi^r$ and $y_0 \overline{y_0} = (-1)^r$.
One checks that in all six cases $g_r(y_0) \equiv z \mod 4$.
Choose $w$ such that $\frac{w + \sqrt{5} z}{2} \equiv y_0 \mod 4$.
Then the conditions of Lemma~\ref{y set general} are satisfied, so $\frac{w + \sqrt{5} z}{2} \in \omega(\phi)^{i - r} \phi^r \exp_2(4 \sqrt{5} \Z_2)$.
By Lemma~\ref{image under h p=2}, there exists $x \in \Z_2$ such that $F_{i, r}(r + 2 x) = z$.
\end{proof}

Proposition~\ref{image p=2} implies that $\dens(2) \geq \frac{1}{4} + \frac{1}{4} + \frac{1}{8} + \frac{1}{32} = \frac{21}{32}$.
Theorem~\ref{theorem p=2} now follows.
In particular, the superset relations for $F_{1, 0}(0 + 2 \Z_2)$, $F_{0, 0}(0 + 2 \Z_2)$, and $F_{0, 1}(1 + 2 \Z_2)$ in Proposition~\ref{image p=2} are equalities.

\section*{Acknowledgment}

We thank Genevieve Maalouf for productive discussions.

\section*{Data availability statement}

Data generated during this study can be found in the OEIS~\cite[\href{http://oeis.org/A350999}{A350999} and \href{http://oeis.org/A351000}{A351000}]{OEIS}.

\end{document}